\documentclass[11pt]{article}\pagestyle{headings}

\pagestyle{headings}

\usepackage{amsmath}\usepackage{amsfonts}\usepackage{amssymb}\usepackage{amsthm}
\usepackage{graphicx}
\usepackage{ifthen}
\usepackage{lscape}
\usepackage{subfigure}
\usepackage{setspace}
\usepackage{subeqnarray}

\usepackage[left=1in,top=1.5in,right=1in,bottom=1in]{geometry}

\usepackage{verbatim}
\usepackage{mdwlist}
\usepackage{xcolor}
\usepackage{hyperref}

\newtheorem{theorem}{Theorem}
\newtheorem{remark}{Remark}
\newtheorem{corollary}{Corollary}
\newtheorem{lemma}{Lemma}
\newtheorem{proposition}{Proposition}

\setcounter{secnumdepth}{3}

\long\def\symbolfootnote[#1]#2{\begingroup\def\thefootnote{\fnsymbol{footnote}}
\footnote[#1]{#2}\endgroup}

\date{}
\author{Assyr Abdulle\thanks{Section of Mathematics, Swiss Federal Institute of Technology (EPFL), Station 8, CH-1015, Lausanne, Switzerland
}
\and Ping Lin\thanks{Division of Mathematics, University of Dundee, 23 Perth Road, Dundee, Scotland DD1 4HN, UK%
}
\and Alexander V.\ Shapeev\footnotemark[1]
}

\title{Homogenization-based Analysis of Quasicontinuum Method for Complex Crystals}

\renewcommand{\vec}{\boldsymbol}
\newcommand{\vecu}{\vec{u}}

\newcommand{\baru}{\bar{u}}
\newcommand{\tildeu}{\tilde{u}}
\newcommand{\eps}{\epsilon}

\newcommand{\per}{{\rm per}}

\newcommand{\Eint}{E_{\rm int}}
\newcommand{\Eext}{E_{\rm ext}}

\newcommand{\calR}{{\cal R}}
\newcommand{\bbR}{{\mathbb R}}


\begin{document}
\sloppy

\maketitle

\begin{abstract}
Among the efficient numerical methods based on atomistic models, the quasicontinuum (QC) method, introduced by Tadmor, Ortiz, and Phillips (1996), has attracted growing interest in recent years.
Originally, the QC method was developed for materials with simple crystalline lattice (simple crystals) and later was extended to complex lattice (Tadmor et al, 1999).
In the present paper we formulate the QC method for complex lattices in a homogenization framework and perform analysis of such a method in a 1D setting.
We also present numerical examples showing that the convergence results are valid in a more general setting.
\end{abstract}

\section{Introduction}

In some applications of solid mechanics, such as modeling cracks, structural defects, or nanoelectromechanical systems (NEMS), the classical continuum description is not suitable, and it is required to utilize an atomistic description of materials.
However, full atomistic simulations are prohibitively expensive, hence there is a need for efficient numerical method.
Among the efficient methods based on atomistic models, the quasicontinuum (QC) method has attracted growing interest in recent years \cite{MillerTadmor2002}.

The QC method is a multiscale method capable of coupling atomistic and continuum description of materials.
It is intended to model an atomistic material in a continuum manner in the regions where deformation variations are low and use fully atomistic model only in the small neighborhood of defects, thus effectively reducing the degrees of freedom of the system.
Originally, the QC method was developed for materials with simple crystalline lattice \cite{TadmorPhillipsOrtiz1996} and the convergence of a few variants of the method has been analyzed under some practical assumptions (see, e.g., \cite{DobsonLuskin2008, MingYang2009, Lin2007, OrtnerSuli2008}).
The QC method is based on the so-called Cauchy-Born rule (see, e.g., \cite{Ericksen2008, FrieseckeTheil2002, EMing2007, BlancLeBrisLions2007a}) which states that the energy of a certain volume of material can be approximated through the deformation energy density, which is computed for a representative atom assuming that the neighboring atoms follow a uniform deformation.
Later, QC was extended to materials with complex lattice (a union of a number of simple lattice sites) \cite{TadmorSmithBernsteinEtAl1999} based on the improved Cauchy-Born rule \cite{Stakgold1950} which accounts for relative shifts between the comprising simple lattice sites.
Examples of such materials include diamond cubic \mbox{Si}, HCP metals (stacking two simple hexagonal lattices with a shift vector) like \mbox{Zr}, ferroelectric materials, salts like Sodium Chloride, and intermetallics like \mbox{NiAl}.
Recent developments of QC for complex lattices also include adaptive choice of representative cell of complex crystals \cite{DobsonElliottLuskinEtAl2007}.
It appears that no rigorous analysis is available so far for the complex lattice QC.

In the present work we propose a treatment of complex crystalline materials within the framework of discrete numerical homogenization. Homogenization techniques for partial differential equations (PDEs) with multiscale coefficients
are known to be successful for obtaining effective equations with coefficients properly averaged out
\cite{BensoussanLionsPapanicolaou1978}. Finite element methods based on homogenization theory have been pioneered by
Bab\u{u}ska \cite{Babuska1976} and have attracted growing attention these past few years
(see \cite{GeersKouznetsovaBrekelmans2010, Abdulle2009, EEL2007, EfendievHou2009} for textbooks or review papers).
Following the ideas of \cite{BensoussanLionsPapanicolaou1978}, we use homogenization techniques to describe
the coarse-graining of complex lattice. 
This allows to give a new formulation of the QC method for complex lattice (that we will sometimes call ``homogenized QC" (HQC) method).
More interestingly, we find that there is equivalence between the discretely homogenized QC and the complex lattice QC based on the improved Cauchy-Born rule.
We can then use this discrete homogenization as a framework for the description of a quasicontinuum method for complex materials. There are several benefits in this regard.
First, in this framework the connection to the well-developed theory of continuum homogenization and related numerical methods
becomes more apparent.
This allows us to apply the analysis techniques developed for continuum homogenization \cite{Abdulle2009, EEL2007} to the quasicontinuum method for complex materials.
Second, homogenization theory can be used to upscale the atomistic model in both, time and space, which makes it promising for modeling and especially analyzing zero temperature and finite temperature motion of atomistic materials \cite{EEL2007, FishChenLi2007,MillerTadmor2002}.
Also, homogenization can be applied to ``stochastic'' materials, atomistic counterparts of which include polymers \cite{BaumanOdenPrudhomme2009} and glasses.
Last, for the finite temperature simulations, when materials are modeled with static atoms interacting with effective temperature-dependent potentials, homogenization may serve as a rigorous instrument to derive such potentials.

We note that the idea of applying homogenization to atomistic media has appeared in the literature \cite{Chung2004, ChungNamburu2003, ChenFish2006, FishChenLi2007, BaumanOdenPrudhomme2009}.
We also note that the method considered in this paper is essentially equivalent to the QC for complex crystals,
being put in the framework of numerical homogenization.\footnote{%
For more details on relations of different multiscale methods
for complex crystalline materials, refer to the companion paper
\cite{AbdulleLinShapeevII}.}
However, the rigorous discrete homogenization procedure and related numerical method allow
us to derive error estimates for the homogenized QC method, when compared to the
solution of discretely homogenized atomistic equations.
It also allows, by a reconstruction procedure, to approximate the original full atomistic solution.
To the best of our knowledge,
such error estimates are new.
As in many numerical homogenization
techniques for PDEs, there is no need for our numerical approximation to derive
homogenized potential before-hand, since the effective potential is computed
on the fly (see, e.g., \cite{EEL2007}). Finally, we note that the error estimates are derived in one dimension for linear interaction, but
the numerical methods itself applies to nonlinear multi-dimensional problems. Numerical
experiments show that the derived estimates are valid in more general situations.

The paper is organized as follows.
After a brief presentation of a 1D model problem of atomistic equilibrium (Section \ref{sec:problem_formulation}) we discuss discrete homogenization (Sections \ref{sec:atm-hmg} and \ref{sec:estimates}), and then formulate and analyze a
macro-micro numerical method
capable of capturing effective behavior of a complex material (Sections \ref{sec:HQC} and \ref{sec:HQC-convergence}).
We also illustrate how the presented technique can be applied to 2D crystals (Section \ref{sec:2d}).
Numerical examples illustrating the performance of our method are then presented (Section \ref{sec:numeric}), followed by concluding remarks (Section \ref{sec:conclusion}).

\subsection{Function spaces}
\label{sub:fct_space}

We consider the space of $n$-periodic functions on the lattice $\delta\mathbb Z$ ($\delta\in\bbR$, $\delta>0$):
\[
U_\per^n(\delta\mathbb{Z})= \left\{u: {\delta\mathbb Z}\to\bbR: \ u(X_i) = u(X_{i+n})\ \forall i\in{\mathbb Z} \right\}.
\]
and the space of $n$-periodic sequences with zero average:
\[
U_{\#}^n(\delta\mathbb{Z})= \left\{u\in U_\per^n(\delta\mathbb{Z}):~\left<u\right>_X=0\right\},
\]
where the discrete integration operator $\left<\bullet\right>_X$ is defined for $u\in U_\per^n(\delta\mathbb{Z})$
by
\[
\left<u\right>_X= \frac{1}{n}\sum\limits_{i=1}^{n} u(X_i).
\]
Likewise, we consider the tensor product space on $\delta_1\mathbb Z\times \delta_2\mathbb Z$:
\begin{align*}
U_\per^{n_1}(\delta_1\mathbb{Z})\otimes U_\per^{n_2}(\delta_2\mathbb{Z})
= \{ ~& u:\delta_1\mathbb Z\times \delta_2\mathbb Z\to \bbR:
\\ ~&
u(X_i,\bullet) = u(X_{i+n_1},\bullet),~~ u(\bullet, Y_j) = u(\bullet,Y_{i+n_2})\quad \forall i,j\in{\mathbb Z}\},
\end{align*}
and discrete integration operators
\[
\left<u\right>_X= \frac{1}{n_1}\sum\limits_{i=1}^{n_1} u(X_i,Y_j),
\quad
\left<u\right>_Y= \frac{1}{n_2}\sum\limits_{j=1}^{n_2} u(X_i,Y_j),
\quad
\left<u\right>_{XY}= \left<\left<u\right>_Y\right>_X= \left<\left<v\right>_X\right>_Y.
\]
A bilinear form for $u,v\in U_\per^n(\delta\mathbb{Z})$ is defined by
\[
\left<u,{v}\right>_X = \frac{1}{n}\sum\limits_{i=1}^{n} u(X_i)\, v(X_i),
\]
and for $u,v\in U_\per^{n_1}(\delta_1\mathbb{Z})\otimes U_\per^{n_2}(\delta_2\mathbb{Z})$ by
\[
\left<u,{v}\right>_{XY} = \frac{1}{n_1 n_2}\sum\limits_{i=1}^{n_1} \sum\limits_{j=1}^{n_2} u(X_i,Y_j)\, v(X_i,Y_j),
\]
For $u\in U_\per^n(\delta\mathbb{Z})$ we introduce the forward discrete derivative $D u\in U_\per^n(\delta\mathbb{Z})$
\[
D u(X_i) = \frac{u(X_{i+1})-u(X_i)}{X_{i+1}-X_i}= \frac{u(X_{i+1})-u(X_i)}{\delta},
\]
and the $r$-step discrete derivative ($r\in\mathbb Z,r\neq 0$)
$D_{r}{v}\in U_\per^n(\delta\mathbb{Z})$
\[
D_{r} u(X_i) =\frac{u(X_{i+r})-u(X_i)}{X_{i+r}-X_i}= \frac{u(X_{i+r})-u(X_i)}{r\delta}.
\]
In addition to differentiation operators, we define for $u\in U_\per^n(\delta\mathbb{Z})$, the translation operator
$Tu\in U_\per^n(\delta\mathbb{Z})$
\[
Tu(X_i) = u(X_{i+1}).
\]
Then the $r$-step translation ($r\in\mathbb Z$) can be expressed as a power of $T$:
\[
T^r u(X_i) = u(X_{i+r}).
\]
The definitions of the discrete derivative and translation generalize to functions in
$U_\per^{n_1}(\delta_1\mathbb{Z})\otimes U_\per^{n_2}(\delta_n\mathbb{Z})$
by considering the partial discrete derivative and translation operators, i.e.,
$D_X,D_{X,r},T_{X}$ applied to $u(\bullet,Y_j)$ and
$D_Y,D_{Y,r},T_{Y}$ applied to $u(X_i,\bullet)$.

The following lemma, whose proof is trivial, will be useful:
\begin{lemma}[Discrete integration by parts]\label{lem:integration-by-parts}
For $u,{v}\in U_\per^n(\delta\mathbb{Z})$ the following identity holds:
\[
\left<u, D_r {v} \right>_X
=
- \left< T^{-r} D_{r} u, {v} \right>_X.
\]
This identity can be written in an operator form as $(D_r)^* = T^{-r} D_{r}$.
\end{lemma}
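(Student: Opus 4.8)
The plan is to work directly from the definitions, expanding both sides into explicit finite sums over a single period $i=1,\dots,n$ and matching them termwise. Starting from the left-hand side, I would write
\[
\left<u, D_r v\right>_X = \frac{1}{n}\sum_{i=1}^{n} u(X_i)\,\frac{v(X_{i+r})-v(X_i)}{r\delta}
= \frac{1}{nr\delta}\left(\sum_{i=1}^{n} u(X_i)\,v(X_{i+r}) - \sum_{i=1}^{n} u(X_i)\,v(X_i)\right),
\]
thereby separating the ``translated'' term from the diagonal term. This is the discrete analogue of splitting $\int u\,v'$ after applying the product rule, and it isolates exactly the one sum on which the summation-by-parts manoeuvre must act.

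The key step is the reindexing of the first sum using $n$-periodicity. Since both $u$ and $v$ lie in $U_\per^n(\delta\mathbb{Z})$, the product $i\mapsto u(X_i)v(X_{i+r})$ is itself $n$-periodic, so summing it over any full block of $n$ consecutive indices gives the same value; in particular, shifting the summation index by $-r$ yields
\[
\sum_{i=1}^{n} u(X_i)\,v(X_{i+r}) = \sum_{i=1}^{n} u(X_{i-r})\,v(X_i).
\]
Substituting this back and factoring out $v(X_i)$ gives
\[
\left<u, D_r v\right>_X = \frac{1}{nr\delta}\sum_{i=1}^{n}\bigl(u(X_{i-r})-u(X_i)\bigr)v(X_i).
\]
It then remains to recognize the bracketed quantity as $-r\delta\,(T^{-r}D_r u)(X_i)$: indeed, $(T^{-r}D_r u)(X_i) = (D_r u)(X_{i-r}) = \frac{u(X_i)-u(X_{i-r})}{r\delta}$ by the definitions of $T^{-r}$ and $D_r$, which matches the right-hand side $-\left<T^{-r}D_r u, v\right>_X$ after the $r\delta$ factors cancel.

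I do not anticipate a genuine obstacle, which is consistent with the lemma being flagged as trivial; the only point demanding care is the index shift, since it is precisely here that periodicity does the work that the vanishing of boundary terms does in the continuous setting. Concretely, one must check that shifting by $-r$ and then relabelling does not introduce stray endpoint contributions, and this is guaranteed because $U_\per^n(\delta\mathbb{Z})$ is closed under translation so that a sum over one period is translation-invariant. The operator-form identity $(D_r)^* = T^{-r}D_r$ follows immediately by reading off the adjoint with respect to $\left<\,\cdot\,,\cdot\,\right>_X$ from the displayed equality.
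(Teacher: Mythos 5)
Your proof is correct: the expansion of the bilinear form, the index shift justified by $n$-periodicity of $i\mapsto u(X_i)v(X_{i+r})$, and the identification $(T^{-r}D_r u)(X_i)=\frac{u(X_i)-u(X_{i-r})}{r\delta}$ all check out, including for negative $r$. The paper omits the proof entirely (calling it trivial), and your argument is precisely the standard summation-by-parts computation it has in mind, so there is nothing to contrast.
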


We finally define appropriate norms for functions $v\in U_\per^n(\delta\mathbb{Z})$:
\begin{equation}
\begin{array}{c} \displaystyle
\|{v}\|_{L^q(n)} = \left(\frac{1}{n}\sum\limits_{i=1}^{n} |v(X_i)|^q\right)^{1/q},
\\ \displaystyle
\|{v}\|_{L^{\infty}(n)} = \max\limits_{1\le i\le n} |v(X_i)|,
\quad
|{v}|_{W^{1,q}(n)} = \|D {v}\|_{L^q(n)},
\\ \displaystyle
|{v}|_{H^1(n)} = |{v}|_{W^{1,2}(n)},
\quad
|{v}|_{H^2(n)} = \|D^2 {v}\|_{L^2(n)},
\quad
|{v}|_{H^{-1}(n)} = \sup\limits_{\substack{w\in U_{\#}^n(\delta\mathbb{Z}) \\ w\ne 0}} \frac{\left<v,w\right>_X}{~~|{w}|_{H^1(n)}}.
\end{array}
\label{eq:estimates:norms_fct}
\end{equation}

\subsubsection{Identification in $\bbR_\per^n$}
\label{rk:u_identification}
It is clear that a function $u\in U_\per^n(\delta\mathbb{Z})$ can be identified with a representant $\vecu =\left[u_i\right]_{i=1}^{n}$
in $\bbR_\per^n$, where $u_i=u(X_i)$ (the subscript $_\per$ means that $u_i$ is defined by periodic extension $u_{i+n}=u_i$ for all indices $i\in\mathbb Z$).
We can also identify functions in $U_{\#}^n(\delta\mathbb{Z})$ with their representants in $\bbR_\per^n$
with zero mean.
We will denote this vector space as $\bbR_{\#}^n$.
In this paper we will use a product space of $U_\per^n(\delta\mathbb{Z})$ with different values of $\delta$.
In such a case it is important to retain the functional notation for $u$.
However, when there is no confusion, we will avoid such heavy notations and simply use $\vecu ,D_r\vecu ,T\vecu \in\bbR_\per^n$
where due to identification of $u_i$ with $u(X_i)$, the operators are defined as
\begin{eqnarray}
(D_r\vecu )_i=D_r u_i=\frac{u_{i+r}-u_i}{r\delta},
\quad
(T\vecu )_i=Tu_i=u_{i+1},
\end{eqnarray}
$D_1$ will simply be denoted as $D$.
Likewise the discrete integration and bilinear form can be written as
\[
\left<\vecu \right>_i = \frac{1}{n}\sum\limits_{i=1}^{n} u_i,
\quad
\left<\vecu ,\vec{v}\right>_i = \frac{1}{n}\sum\limits_{i=1}^{n} u_i v_i.
\]
The notation $\vecu \vec{v}$ denotes the component-wise product:
$
\vecu \vec{v} = \left[ u_i v_i \right]_{i=1}^{n},
$which will enable us to conveniently write $\left<\vecu ,\vec{v}\right>_i = \left<\vecu \vec{v}\right>_i$.
A scalar $\alpha\in\bbR$ will sometimes be identified with the vector $\vec{\alpha} = \left[\alpha\right]_{i=1}^n$.
Finally, for the norms previously defined on $U_{\#}^n(\delta\mathbb{Z})$, we will use the following notations for $\vec v\in\bbR_\per^n$:
\[
\begin{array}{c} \displaystyle
\|\vec{v}\|_{L^q(n)} = \left(\frac{1}{n}\sum\limits_{i=1}^{n} |v_i|^q\right)^{1/q}
\quad
\|\vec{v}\|_{L^{\infty}(n)} = \max\limits_{1\le i\le n} |u_i|,
\quad
|\vec{v}|_{W^{1,q}(n)} = \|D\vec{v}\|_{L^q(n)},
\\ \displaystyle
|\vec{v}|_{H^1(n)} = |\vec{v}|_{W^{1,2}(n)},
\quad
|\vec{v}|_{H^2(n)} = \|D^2 \vec{v}\|_{L^2(n)},
\quad
|\vec{v}|_{H^{-1}(n)} = \sup\limits_{\substack{\vec{w}\in \bbR_{\#}^n \\ \vec{w}\ne 0}} \frac{\left<\vec{v},\vec{w}\right>_i}{|\vec{w}|_{H^1(n)}}.
\end{array}
\]
When it will cause no confusion, we will omit the argument $n$ in the norms, thus writing only $\|\vec{v}\|_{L^2}$, $|\vec{v}|_{H^1}$, etc.

\section{Problem Formulation}\label{sec:problem_formulation}
The focus of the present study is on correct treatment of atomistic materials with spatially oscillating or inhomogeneous local properties.
For simplicity, we will first consider the 1D periodic case (the 2D case will be discussed in Section \ref{sec:2d}).

\subsection{Equations of Equilibrium}

We describe the formulation of the problem of finding an equilibrium of an atomistic material in the 1D periodic setting.
We consider the periodic boundary conditions in order to avoid difficulties arising from presence of the boundary of the atomistic material.
Otherwise, the boundary of an atomistic material, unless properly treated, would contribute an additional error to the numerical solution, studying which is not an aim of the present work.
Nevertheless, it should be noted that the numerical method and the algorithm proposed in the present work can be applied to Dirichlet, Neumann, or other boundary conditions.

Consider a material at the microscopic scale which occupies a domain $\Omega$.
We assume that the position of the atoms in reference configuration is given by $X_i=\eps i\in\eps{\mathbb Z}\cap \Omega.$
When the material experiences a deformation the atom positions become $x_i = X_i + u_i$.
We assume that the displacements $u_i$ behave periodically with a period length $N\in\mathbb{N}$, i.e.,
\begin{equation}
\label{equ:period_displ}
\quad u_{i+N} = u_i \quad -\infty<i<\infty.
\end{equation}
Setting $u_i=u(X_i)$ (see Section \ref{rk:u_identification}) we see that $u\in U_\per^N(\eps\mathbb{Z})$.
According again to Remark \ref{rk:u_identification} we will identify $u$ with $\vecu \in\bbR_\per^N$ for the discussion which follows.

We assume that the atoms $X_i,X_j$ interact through the pairwise potential $\varphi_{i,j}$, which depends on particular atoms $i$ and $j$ thus allowing for modeling heterogeneous materials.
Due to the assumption of periodic displacements we have $\varphi_{i+N,j+N}=\varphi_{i,j}.$
The energy of atomistic interaction of the system (summed for the atoms over one period) is then
\begin{align*}
\Eint(\vecu )
=~&
\epsilon \sum_{i=1}^{N} \sum_{j=i+1}^{\infty} \varphi_{i,j}\left(\frac{x_j-x_i}{\epsilon}\right)
=
\epsilon \sum_{i=1}^{N} \sum_{j=i+1}^{\infty} \varphi_{i,j}\left((j-i) + \frac{u_j-u_i}{\epsilon}\right)
\\
=~&
\epsilon \sum_{i=1}^{N} \sum_{r=1}^{\infty} \varphi_{i,i+r}\left(r + \frac{u_{i+r}-u_i}{\epsilon}\right)
=
\epsilon \sum_{i=1}^{N} \sum_{r=1}^{\infty} \varphi_{i,i+r}\left(r + r D_r u_i\right).
\end{align*}

We assume that the potential $\varphi_{i,j}(z)$ vanishes for $|z|$ large enough, so that it is sufficient to consider at most $R$ neighboring atoms in the interaction energy:
\[
\Eint(\vecu ) =
\epsilon \sum_{i=1}^{N} \sum_{r=1}^{R} \varphi_{i,i+r}\left(r + r D_r u_i \right)
= \left<\sum\limits_{r=1}^R \vec{\Phi}_{r}\left(D_r\vecu \right)\right>_i
.
\]
where $\vec{\Phi}_{r}:{\bbR_\per^N}\to \bbR_\per^N$ are introduced in the following way:
\begin{equation}
\left(\vec{\Phi}_{r}(\vec{z})\right)_i = \varphi_{i,i+r}(r + r z_i).
\label{eq:Phi}
\end{equation}

The potential energy of the external force $\vec{f}$ is
\[
\Eext(\vecu ) = -\epsilon \sum_{i=1}^{N} f_i u_i = - \left< \vec{f}, \vecu \right>_i.
\]
The forces $f_i$ on each atom are given and considered to be independent of actual atom positions $x_i$.
For the problem to be well-posed, the sum of all forces per period is assumed to be zero, i.e., $\left<\vec{f}\right>_i = 0$.

The total potential energy of the atomistic system is then
\[
\Pi(\vecu ) = \Eint(\vecu ) + \Eext(\vecu ).
\]
In these notations the problem of finding the equilibrium configuration of atoms can be written as
\begin{equation}
\frac{\partial \Pi}{\partial u_i} = 0 \quad (i=1,2,\ldots,N).
\label{eq:original_equation}
\end{equation}
For the equations \eqref{eq:original_equation} to have a unique solution, we must additionally require that the average of $\vecu $ is zero:
\begin{equation}
\left<\vecu \right>_i = 0.
\label{eq:u-averages-to-zero}
\end{equation}

The equilibrium equations \eqref{eq:original_equation} together with the additional condition \eqref{eq:u-averages-to-zero} can be written in variational form: find $\vecu \in \bbR_\per^N$ such that
\begin{subeqnarray}
	\Pi'(\vecu ;\vec{v}) = \Eint'(\vecu ;\vec{v}) + \Eext'(\vec{v}) & = & 0
	\quad \forall \vec{v}\in \bbR_\per^N
\slabel{eq:variational_equation_generic}
\\
	\left<\vecu \right>_i & = & 0,
\slabel{eq:variational_equation_generic_average0}
\label{eq:variational_problem_generic}
\end{subeqnarray}
where
\begin{eqnarray}
\notag
\Eext'(\vec{v}) &=& - \left< \vec{f}, \vec{v} \right>_i,
\\
\Eint'(\vecu ;\vec{v})
&=& \sum\limits_{r=1}^R \left<\vec{\Phi}'_{r}\left(D_{r} \vecu \right),D_{r} \vec{v}\right>_i,
\qquad \textnormal{and}
\notag
\\
\left(\vec{\Phi}'_{r}(\vec{z})\right)_i
&=& \frac{\partial}{\partial z_i}(\vec\Phi_{r}(\vec z))_i = \frac{\partial}{\partial z_i}\varphi_{i,i+r}(r + r z_i)=r\varphi_{i,i+r}'(r + r z_i)
.
\label{eq:Psi}
\end{eqnarray}
Thus, the variational form of the problem \eqref{eq:variational_problem_generic} is
\begin{subeqnarray}
	\sum_{r=1}^{R}\left<\vec{\Phi}'_{r}\left(D_{r} \vecu \right), D_{r} \vec{v}\right>_i
	& = &
	\left< \vec{f}, \vec{v} \right>_i
	\quad \forall \vec{v}\in \bbR_\per^N
\slabel{eq:variational_equation_nonlinear}
\\
	\left<\vecu \right>_i & = & 0.
\slabel{eq:variational_equation_nonlinear_average0}
\label{eq:variational_problem_nonlinear}
\end{subeqnarray}

As written in this form, the variational equation \eqref{eq:variational_equation_nonlinear} resembles the nonlinear continuum equation
\[
\left<\Phi'\left(\frac{du}{dX}\right), \frac{dv}{dX} \right> = \left< f, v\right>.
\]

The problem \eqref{eq:variational_problem_nonlinear} is often solved with the Newton's method.
It consists in choosing the initial guess $\vecu^{(0)}$ and performing iterations to find $\vecu^{(n)}$.
For that, the equations \eqref{eq:variational_problem_nonlinear} are first linearized on the solution $\vecu^{(n)}$:
\begin{subeqnarray}
\sum_{r=1}^{R}\left<\vec{\Phi}'_{r}\left(D_{r} \vecu^{(n)}\right), D_{r} \vec{v}\right>_i \hfill
\notag \\
+ \sum_{r=1}^{R}\left< \vec{\Phi}_{r}''\left(D_{r} \vecu^{(n)}\right) D_{r} \left(\vecu^{(n+1)}-\vecu^{(n)}\right), D_{r} \vec{v}\right>_i
-\left<\vec{f}, \vec{v}\right>_i
	&=& 0
	\quad \forall \vec{v}\in \bbR_\per^N\qquad
\slabel{eq:variational_equation_nonlinear_linearized}\\
	\left<\vecu^{(n+1)}\right>_i & = & 0,
\slabel{eq:variational_equation_nonlinear_linearized_average0}
\end{subeqnarray}
and then solved for the next approximation $\vecu^{(n+1)}$ until two successive iterations give close results.
Here, according to \eqref{eq:Phi} and \eqref{eq:Psi}, $\vec{\Phi}_{r}''$ is given by
\[
\left(\vec{\Phi}_{r}^{''}(\vec{z})\right)_{i} = \frac{\partial}{\partial z_i}(\vec\Phi_{r}'(\vec z))_i = r^2 \varphi_{i,i+r}'' (r + r z_i).
\]
Notice that we have identified here the $N\times N$ diagonal Jacobian matrix $\vec{\Phi}_{r}''$ with a vector in $\bbR^N$
and used the component-wise product between two vectors (see Section \ref{sub:fct_space}).

\subsection{Linearized Model and Nearest Neighbor Interaction}

	We can linearize the problem in a neighborhood of a given displacement $\bar{u}_i$:
	\[
	(\vec{\Phi}'_{r}(D_r \vecu))_i \approx r \varphi_{i,i+r}' (r + r D_r \bar{u}_i) + r^2 \varphi_{i,i+r}'' (r + r D_r \bar{u}_i) D_r (u_i-\bar{u}_i).
	\]
	Hence upon defining
	\[
	\vec{\xi}_{r} = \left[ r \varphi_{i,i+r}' (r + r D_r \bar{u}_i) - r^2 \varphi_{i,i+r}'' (r + r D_r \bar{u}_i) D_r \bar{u}_i \right]_{i=1}^{N},
	\qquad \textnormal{and}
	\]
	\begin{equation}
	\vec{\psi}_{r} = \left[ r^2 \varphi_{i,i+r}''(r + r D_r \bar{u}_i) \right]_{i=1}^{N}
	\label{eq:linearization_of_potential},
	\end{equation}
	we can use the linearized approximation
\[
\Pi'(\vecu ;\vec{v}) \approx
\sum_{r=1}^{R}\left<\vec{\xi}_{r} + \vec{\psi}_{r} D_{r} \vecu , D_{r} \vec{v}\right>_i
-\left<\vec{f}, \vec{v}\right>_i
\]
\[
=
\sum_{r=1}^{R}\left<\vec{\psi}_{r} D_{r} \vecu , D_{r} \vec{v}\right>_i
-\left<\vec{f} - \sum_{r=1}^{R} D_{r} T^{-r}\vec{\xi}_{r}, \vec{v}\right>_i.
\]
Here we used the formula of integration by parts (Lemma \ref{lem:integration-by-parts}) and again the component-wise product for $\vec\Phi'_{r}\left(D_{r} \vec u\right)$.
We see that in the case of linear interaction, the term with $\vec{\xi}_{r}$ can be absorbed into the external force $\vec{f}$, which turns the generic equilibrium equations \eqref{eq:variational_problem_generic} to
\begin{subeqnarray}
	\sum_{r=1}^{R}\left<\vec{\psi}_{r} D_{r} \vecu , D_{r} \vec{v}\right>_i
	& = & \left<\vec{f}, \vec{v}\right>_i
	\quad \forall \vec{v}\in \bbR_\per^N
\slabel{eq:variational_equation_linear}
\\
	\left<\vecu \right>_i & = & 0.
\slabel{eq:variational_equation_linear_average0}
\label{eq:variational_problem_linear}
\end{subeqnarray}
If we further assume that only nearest neighboring atoms interact (i.e., that $R=1$), then the equations \eqref{eq:variational_problem_linear} are further simplified to
\begin{subeqnarray}
	\left<\vec{\psi} D\vecu , D\vec{v}\right>_i
	& = & \left<\vec{f}, \vec{v}\right>_i
	\quad \forall \vec{v}\in \bbR_\per^N
	\slabel{eq:variational_equation_linear-nn_i}
\\
	\left<\vecu \right>_i & = & 0,
	\slabel{eq:variational_equation_linear-nn_i_average0}
\label{eq:variational_problem_linear-nn_i}
\end{subeqnarray}
where we denote $\vec{\psi} = \vec{\psi}_1$ (i.e., $\vec{\psi} = \vec{\psi}_{r}$ for $r=1$).
It will sometimes be convenient to use a ``strong form'' of \eqref{eq:variational_problem_nonlinear} or \eqref{eq:variational_problem_linear-nn}, i.e., find $\vecu \in \bbR_\per^N$ such that
\begin{subeqnarray}
-D\left(\vec{\psi} D\vecu \right)&=&T\vec{f}
	\slabel{eq:variational_equation_linear-nns}
\\
	\left<\vecu \right>_i & = & 0,
	\slabel{eq:variational_equation_linear-nn_average0s}
\label{eq:variational_problem_linear-nns}
\end{subeqnarray}
which is derived using Lemma \ref{lem:integration-by-parts}.

\section{Homogenization of Atomistic Media}\label{sec:atm-hmg}
We come now to the main subject of this paper, the treatment of materials with heterogeneous atomistic interaction as illustrated in Figures \ref{fig:springs-heterogeneous} and \ref{fig:2d-springs}.
\begin{figure}[h!t]
\begin{center}
\includegraphics{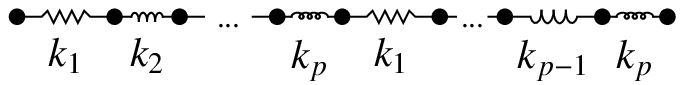}
\caption{Illustration of a 1D model problem with heterogeneous interaction.}
\label{fig:springs-heterogeneous}
\end{center}
\end{figure}

\begin{figure}
\begin{center}
\includegraphics{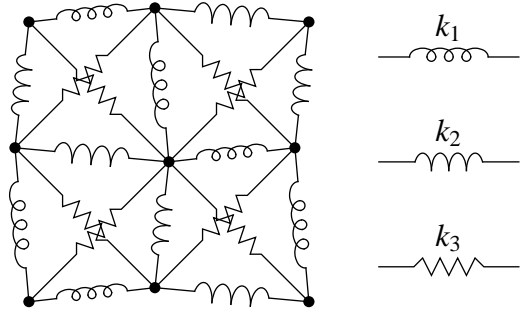}
\caption{Illustration of a 2D model problem with heterogeneous interaction.}
\label{fig:2d-springs}
\end{center}
\end{figure}

Naive coarse graining for such models (e.g., as given by the straightforward application of the quasicontinuum method)
fail to give the correct answer.
One way to treat such problems is to apply the so-called Cauchy-Born rule for complex lattices \cite{Stakgold1950, TadmorSmithBernsteinEtAl1999, SmithTadmorBernsteinEtAl2001, DobsonElliottLuskinEtAl2007}.
We present here another coarse graining strategy based on homogenization ideas.
We derive below a discrete homogenization of the atomistic material which will be the basis for formulating and analyzing a quasicontinuum method for complex lattices.
We note that our approach is different from the approach chosen in other works discussing homogenization of atomistic media
\cite{ChenFish2006, ChenFish2006a, FishChenLi2007}, which consists in treating the homogenized material at the continuum level and the heterogeneities at the atomistic level (the idea of continuous $X$ and discrete $Y$ can also be seen in the proof of the main results in \cite{EMing2007}).
In our approach, the homogenized material will retain its atomistic description.
In this section we derive the homogenized equation using asymptotic expansion.
Rigorous justification of the homogenized limit will be given in Section 4 by means of error estimates towards the full atomistic solution.

\subsection{Asymptotic expansion}\label{sec:atm-hmg:fast-and-slow}
We will assume that the local heterogeneity of the atomistic interaction is periodic with period $p\eps,~p\in\mathbb{N}$.
In order to take into account the local variation of the atomistic interaction we think of the displacement as depending on a fast and a slow scale $u(X_i)\sim u(X_i,X_i/\eps)$.
We define $X_i \in\eps\mathbb Z$, the macro (``slow'') variable, and $Y_i = X_i/\epsilon \in \mathbb Z$, the micro (``fast'') variable, and consider functions
$u^m:\eps\mathbb Z\times \mathbb Z\rightarrow \bbR$ indexed by $m=0,1,2\ldots$
As we consider periodic local interaction (with period $p\eps,~p\in\mathbb{N}$)
we assume that the functions $u^m$ are $p$-periodic in the fast variable, i.e., they satisfy
\[
u^m(X_i,Y_{j+p}) = u^m(X_i,Y_j),
\]
while the behavior w.r.t.\ $X_i$ is similar to the previously considered
\[
u^m(X_{i+N},Y_j) = u^m(X_i,Y_j).
\]
Recalling the definitions of Section \ref{sub:fct_space}, this means
$u^m\in U_\per^{N}(\eps\mathbb{Z})\otimes U_\per^{p}(\mathbb Z)$.
We then consider the asymptotic expansion
\begin{equation}
\label{equ:asympt_exp}
u= u^0(X_i, Y_j) + \epsilon u^1(X_i, Y_j) + \epsilon^2 u^2(X_i, Y_j) + \ldots
\end{equation}
In addition to the discrete derivative, translation, and integration, defined in Section \ref{sub:fct_space} we need the total derivative and the total $r$-step derivative of a function $u\in U_\per^{N}(\eps\mathbb{Z})\otimes U_\per^{p}(\mathbb Z)$:
\[
D u=
\frac{u(X_{i+1},Y_{i+1})-u(X_i,Y_i)}{\epsilon},\quad
D_r u =
\frac{u(X_{i+r},Y_{i+r})-u(X_i,Y_i)}{\epsilon r}.
\]
A simple calculation shows that the total derivative and the total $r$-step derivative can be expressed in terms of $D_X,D_Y, T_Y,D_{X,r},D_{Y,r}$, the discrete partial derivatives and translation operator defined in Section \ref{sub:fct_space}, in the following way:
\begin{eqnarray}
\label{eq:derivative-full-through-partial}
D u(X_i,Y_j)&=& D_X T_Y u(X_i,Y_j)+ \epsilon^{-1} D_Y u(X_i,Y_j),\\
\nonumber
D_{r} u(X_i,Y_j)&=& D_{X,r} T_{Y}^r u(X_i,Y_j)+ \epsilon^{-1} D_{Y,r} u(X_i,Y_j).
\end{eqnarray}

\subsection{Homogenization for Nearest Neighbor Linear Interaction}\label{sec:atm-hmg:linear-nn}

In this subsection we will perform the asymptotic analysis for the equation of equilibrium of atomistic materials.
To explain our procedure, we first treat the simplest interaction model, i.e., the case of nearest neighbor linear interaction.
Asymptotic expansion and homogenization procedure for more general cases will be given in the following subsection.
We consider the problem \eqref{eq:variational_problem_linear-nn_i}, written in functional form ($u_i=u(X_i)$):
\begin{subeqnarray}
	\left<{\psi^\eps} D u, D{v}\right>_X
	& = & \left<{f}, {v}\right>_X
	\quad \forall {v}\in U_\per^{N}(\eps\mathbb Z)
	\slabel{eq:variational_equation_linear-nn}
\\
	\left<u\right>_X & = & 0,
	\slabel{eq:variational_equation_linear-nn_average0}
\label{eq:variational_problem_linear-nn}
\end{subeqnarray}
with
$\psi^\eps$ defined as follows:
\[
\psi^\eps(X_i)= \psi(X_i, X_i/\eps)=\psi(X_i, Y_i),
\]
where the function $\psi(X_i,\bullet)\in U_\per^{p}(\mathbb Z)$, i.e., the tensor is ``$p$-periodic" in the
$Y$ variable.
We assume that the function $\psi$ is uniformly positive in the following sense:
\begin{equation}
\label{equ:coercivity_psi}
\psi(X_i,Y_j)\geq c_\psi >0
\quad\forall (X_i,Y_j)\in\eps\mathbb{Z}\times\mathbb{Z}.
\end{equation}

We also assume that the external force $f$ it does not depend on $Y$, i.e.,
$f=f(X_i)$.
We emphasize that oscillatory external forces could also be considered.
The homogenized equation would then depend on a proper average of the external forces.
For simplicity we will not consider this case.
We now proceed as in the ``classical homogenization" \cite{Bakhvalov1974, BensoussanLionsPapanicolaou1978, S'anchez-Palencia1980}
and plug the ansatz \eqref{equ:asympt_exp} in \eqref{eq:variational_equation_linear-nns} (we will go back and forth from the variational formulation \eqref{eq:variational_problem_linear-nn} to the strong formulation \eqref{eq:variational_problem_linear-nns}).
This gives
\begin{eqnarray}
\nonumber
-(T_X^{-1} D_X+\eps^{-1}D_Y)&\Big(&\psi D_X T_Yu^0+\eps^{-1}\psi D_Yu^0+\eps\psi D_X T_Yu^1+\psi D_Yu^1\\
\label{equ:plug_ansatz}
&+&\eps^2\psi D_X T_Yu^2+\eps D_Y u^2+\ldots\Big)=f.
\end{eqnarray}
Here we used the identity \eqref{eq:derivative-full-through-partial}, and Lemma \ref{lem:integration-by-parts} to compute the adjoint $(D_X T_Y + \epsilon^{-1} D_Y)^* = (T_X^{-1} D_X T_Y^{-1} + \epsilon^{-1} T_Y^{-1} D_Y)$.
We thus obtain a cascade of equations and collect powers of $\eps$.

Collect the $O(\epsilon^{-2})$ terms in \eqref{equ:plug_ansatz}:
\begin{eqnarray*}
-D_Y\left(\psi D_Y u^0\right)= 0\\
u^0\hbox{ is $p$-periodic in Y.}
\end{eqnarray*}
Thanks to \eqref{equ:coercivity_psi} we have $D_Y u^0=0$.
This implies that $u^0$ is independent of $Y$ and only a function of $X$, i.e.,
\[
u^0(X_i, Y_j) = u^0(X_i).
\]

We next collect the $O(\epsilon^{-1})$ terms in \eqref{equ:plug_ansatz}:
\begin{subeqnarray}
\label{equ:u1}
\slabel{equ:u1:a}
-D_Y\left(\psi D_Y u^1\right)=D_Y(\psi D_X u^0)
\\
\slabel{equ:u1:b}
u^1\hbox{ is $p$-periodic in Y,}
\end{subeqnarray}
where we have used the fact that $u^0$ does not depend on $Y$, which implies $D_X T_Y(\psi D_Yu^0)=0$ and $T_Y u^0=u^0$.
As usual in homogenization we take advantage of the separation of variables of the right hand side of \eqref{equ:u1:a} and we let $\chi=\chi(X_i; Y_j)$
be the solution of
\begin{subeqnarray}
\label{equ:chi}
\slabel{equ:chi:a}
-D_Y\left(\psi D_Y \chi\right)=D_Y\psi\\
\chi^1\hbox{ is $p$-periodic in Y.}
\end{subeqnarray}
In view of \eqref{equ:coercivity_psi}, this problem has a unique solution (up to an additive constant) if and only if $\left<D_Y\psi\right>=0$ (solvability condition) which indeed holds due to the periodicity assumption on $\psi$.
Existence and uniqueness follows from the Lax-Milgram theorem for the following variational problem (see Lemma \ref{lem:cell-problem}): find $\chi(X_i,\bullet)\in U_{\#}^{p}(\mathbb Z)$ such that
\begin{equation}
\left<\psi D_Y \chi, D_Y s\right>_Y = -\left<\psi, D_Y s\right>_Y
\quad \forall s=s(Y_j)\,\in U_{\#}^{p}(\mathbb Z).
\label{eq:linear-nn_chi}
\end{equation}
It is then readily seen that $u^1(X_i, Y_j) = \chi(X_i; Y_j) D_X u^0(X_i)$ solves \eqref{equ:u1}.
The general solution of this latter equation involves a constant depending on $X_i$
determined by the condition $\left<\chi(X_i; \bullet)\right>_Y = 0$ (recall that functions in the space $U_{\#}^{p}(\mathbb Z)$
have zero average), i.e.,
\[
u^1(X_i, Y_j) =\chi(X_i; Y_j) D_X u^0(X_i) + \baru^1(X_i).
\]

Finally, collecting the $O(\epsilon^{0})$ terms in \eqref{equ:plug_ansatz}
gives
\begin{eqnarray*}
&& -D_Y\left(\psi D_Y u^2\right)
	=
	D_Y\left(\psi D_X T_Y u^1\right)
	+ T_X^{-1} D_X\left(\psi(1+D_Y\chi)D_X u^0\right) + f\\
& & u^2\hbox{ is $p$-periodic in Y.}
\end{eqnarray*}
The solvability condition for the existence of a solution $u^2$ reads
\[
\left<D_Y\left(\psi D_X T_Y u^1\right) + T_X^{-1}D_X\left(\psi(1+D_Y\chi)D_X u^0\right)+f\right>_Y=0,
\]
leading to the homogenized equation
\begin{subeqnarray}
-D_X\left(\psi^0D_X u^0\right)&=&T_X f
	\slabel{eq:linear-homogenized-nn}
\\
	\left<{u^0}\right>_X & = & 0,
	\slabel{eq:linear-homogenized-nn_average0}
\label{eq:linear-nn_homogenized}
\end{subeqnarray}
where we choose, as for the original problem \eqref{eq:variational_problem_linear-nn}, the periodic boundary conditions and where
\begin{equation}
\psi^0 = \left<\psi \left(1 + D_Y \chi\right)\right>_Y.
\label{eq:linear-nn_homogenized-tensor}
\end{equation}

Thus, we obtained the equation for the homogenized displacement $u^0$ with the homogenized discrete tensor $\psi^0$.
The homogenized tensor $\psi^0$ no longer depends on the fast variable $Y$ and therefore we can apply the standard QC method to the homogenized equation \eqref{eq:linear-nn_homogenized}.
The equation \eqref{eq:linear-nn_homogenized} has to be supplemented with boundary conditions.
Our choice of periodic boundary conditions for the displacement (see \eqref{equ:period_displ})
leads to searching for $u^0\in U_\per^{N}(\eps\mathbb{Z})$.

In the simple case of nearest neighbor linear interaction, the homogenized discrete tensor $\psi^0 = \left<\psi \left(1 + D_Y \chi\right)\right>_Y$ can be found analytically.
Indeed, from \eqref{equ:chi:a} we see that
$\psi (1 + D_Y \chi)$ does not depend on $Y$:
\begin{equation}
\psi(Y_j) (1 + D_Y \chi(X_i; Y_j)) = C(X_i),
\label{eq:linear-nn_exact_first_integral}
\end{equation}
from where we find
\begin{equation}
D_Y \chi = \frac{C}{\psi} - 1.
\label{eq:linear-nn_exact_DYchi}
\end{equation}
The constant of integration $C = C(X_i)$ can be found by averaging \eqref{eq:linear-nn_exact_DYchi} over $Y$:
\[
0 = \left<C/\psi - 1\right>_Y = C \left<1/\psi\right>_Y - 1,
\]
from where we find
\begin{equation}
C = \left<1/\psi\right>_Y^{-1},
\label{eq:linear-nn_exact:constC}
\end{equation}
and the homogenized tensor is thus
\begin{equation}
\psi^0 = \left<\psi \left(1 + D_Y \chi\right)\right>_Y
=
\left<\psi \left(1 + \frac{C}{\psi} - 1\right)\right>_Y = \left<C\right>_Y = C = \left<1/\psi\right>_Y^{-1}.
\label{eq:linear-nn_exact:homogenized-tensor}
\end{equation}
Thus the homogenized equations \eqref{eq:linear-nn_homogenized} are written as
\[
D_X\left(\left<1/\psi\right>_Y^{-1}\right) D_X u^0=f.
\]
We emphasize that this procedure and the obtained results are well-known for PDEs \cite[Chap.\ 1]{BensoussanLionsPapanicolaou1978}.

\subsection{Generalizations}\label{sec:atm-hmg:generalizations}

Below we generalize the results of the previous subsection to the cases of finite range (i.e., $R>1$) linear (Section \ref{sec:atm-hmg:generalizations:general-linear}) and nonlinear (Section \ref{sec:atm-hmg:general-nonlinear}) interaction, omitting this details of technical nature.

\subsubsection{Finite Range Linear Interaction}\label{sec:atm-hmg:generalizations:general-linear}

One technical difficulty in this case is that there are $R$ different differentiation operators $D_{r}$.
Then a straightforward generalization of the results of the previous subsection would yield $u^1(X_i, Y_j)$ depending on $R$ discrete ``macroscale'' derivatives $D_{X,r} u^0$ ($1\le r\le R$).
This approach would also essentially differ from the results in continuum homogenization.
Therefore, instead of the identity
\begin{equation}
D_{r} = D_{X,r} + \epsilon^{-1} D_{Y,r}
\label{eq:linear_Di}
\end{equation}
the following approximate (accurate to $O(\epsilon)$) identity should be used:
\begin{equation}
D_{r} \simeq T_{Y}^r D_X + \epsilon^{-1} D_{Y,r}.
\label{eq:linear_Di_approximate}
\end{equation}
The accuracy of $O(\epsilon)$ is enough if one seeks to obtain only the homogenized solution $u^0(X_i)$ and the leading term of the correction $\epsilon \chi(X_i; Y_j) D_X u^0(X_i)$.

The procedure can now be performed similarly to the nearest neighbor interaction case: we plug the ansatz
\eqref{equ:asympt_exp} in \eqref{eq:variational_equation_linear}.
As previously, we obtain that $u^0(X_i,Y_j) = u^0(X_i)$ and the $O(\epsilon^{-1})$ terms yields
\[
u^1(X_i,Y_j) = \chi(X_i; Y_j) D_X u^0(X_i) + \baru^1(X_i),
\]
where $\chi$ is a solution of
\begin{subeqnarray}
\label{equ:chir}
-\sum_{r=1}^{R} D_{Y,r}\left(\psi_r D_{Y,r} \chi\right) = \sum_{r=1}^{R} D_{Y,r}\psi_r\\
\chi\hbox{ is $p$-periodic in Y.}
\end{subeqnarray}
Appropriate conditions on $\psi_r$ are required to ensure that \eqref{equ:chir} has a unique solution.
Collecting the $O(\epsilon^0)$ terms using the solvability conditions for $u^2$ (similarly to the nearest neighbor case)
yields the homogenized equation
\[
-D_X\left(\psi^0D_X u^0\right)=T_X f,
\]
where the homogenized tensor $\psi^0$ is defined as
\[
\psi^0 = \sum_{r=1}^{R} \left<\psi \left(1 + D_{Y,r} \chi\right)\right>_Y.
\]

A remarkable feature of the result of homogenization of material with finite range interaction is that the material thus homogenized contains only the nearest neighbor interaction despite the original model having longer interactions.
This is a consequence of our choice of $D$ in the form \eqref{eq:linear_Di_approximate}.
If we would have chosen the exact relation \eqref{eq:linear_Di}, then the homogenized material would contain the same number of interacting atoms.

\subsubsection{Finite Range Nonlinear Interaction}\label{sec:atm-hmg:general-nonlinear}

In this subsection we further generalize the results to the case of a general nonlinear material \eqref{eq:variational_problem_nonlinear}
\begin{eqnarray*}
-\sum_{r=1}^{R}D_{-r}\left[(\Phi^{\epsilon}_r)'\left(D_{r} \vecu \right)\right]&=&f\\
	\left<\vecu \right>_i & = & 0.
\end{eqnarray*}
We assume that the nonlinear tensor $(\Phi^{\epsilon}_r)'(z)_i$ has the form $\Phi_r'(z; X_i,X_i/\eps)$ and set
$Y_i=X_i/\eps$ as previously.
In accordance with the definition \eqref{eq:Psi} it means that the interacting potential $\varphi_{i,j}$ depends on $X_i$ and $Y_j$
and
\[
\Phi'_{r}(z;X_i,Y_j) = r \frac{\partial \varphi_{r}}{\partial z}(r+r z; X_i,Y_j).
\]
We again proceed with the asymptotic expansion.
We use the ansatz \eqref{equ:asympt_exp} (we directly assume that $u^0=u^0(X_i)$ in order to simplify the argument), the approximation \eqref{eq:linear_Di_approximate}
in the above nonlinear equation and identify the power of $\eps$.
This yields the homogenized equation
\[
-D_X\left[(\Phi^0)'\left(D_X u^0\right)\right]=T_X f,
\]
where
\[
(\Phi^0)'(z) = \sum_{r=1}^{R} \left<\Phi'_{r}\left(z+ D_{Y,r} \chi(z)\right)\right>_Y.
\]
The function $\chi(z)=\chi(z;X_i,Y_j)$ solves the parametric problem
\begin{subeqnarray}
\label{equ:chir_nonlin}
-\sum_{r=1}^{R} D_{Y,r}\left[(\Phi^\epsilon_r)'(z+D_{Y,r} \chi(z))\right]=0\\
\chi\hbox{ is $p$-periodic in Y.}
\end{subeqnarray}
Of course, structure assumptions on $(\Phi^\epsilon_r)'$ are needed to ensure a unique solution of \eqref{equ:chir_nonlin}.
We will not go into details here as our analysis in Section 4 and 5 will be limited to the linear case.

\begin{remark}
It is useful to highlight one more feature of the homogenized equations, before we proceed with describing the numerical algorithm.
In terms of atomistic interaction potential $\varphi^\epsilon_{r}(z) = \varphi_{r}(z;X_i, Y_j)$ the homogenized tensor $(\Phi^0)'$ is
\[
(\Phi^0)'(D_X u^0) = \sum_{r=1}^{R} \left<r \frac{\partial \varphi_{r}}{\partial z}\left(r + r (D_X u^0 + D_{Y,r} \chi)\right)\right>_Y.
\]
This representation can be interpreted as the averaging of the functional derivative of the original energy
\[
\sum\limits_{r=1}^{R} \varphi_{r}\left(r+r (D_X u^0 + D_{Y,r} \chi)\right)
\]
at the corrected solution $D_X u^0 + D_{Y,r} \chi$.
This fact is important in showing the equivalence between QC applied to homogenized material and QC for complex lattices \cite{TadmorSmithBernsteinEtAl1999}
and will be proved in the companion paper \cite{AbdulleLinShapeevII}.
\end{remark}

\section{Analysis of Equations}\label{sec:estimates}

In this section we show that the original and the homogenized problems of equilibrium of materials with spatially oscillating properties are well-posed and that the difference between their solutions is $O(\epsilon)$ in the appropriate norms.
We limit our analysis to the case of nearest neighbor linear interaction in the 1D periodic setting, but allow the material properties to vary.
Such interaction corresponds to the nonlinear interaction linearized on a given non-uniform deformation.

After defining the appropriate norms for measuring the error (Section \ref{sec:estimates:preliminaries}), we state the main theorems (Section \ref{sec:estimates:main_results}) followed by proof of technical lemmas (Section \ref{sec:estimates:technical_results}).

In this section, by $C_0$, $C_1$, $C_2$, $C_3$ we denote generic constants which may depend on $c_{\psi}$, $C_{\psi}$, $C_{\psi}'$, and $p$, but are independent of $\epsilon$.

\subsection{Preliminaries}\label{sec:estimates:preliminaries}

Let $u\in U_{\#}^{N}(\eps\mathbb{Z})$ be the solution of \eqref{eq:variational_problem_linear-nn}.
We assume as in the previous section that the tensor $\psi^\eps$ can be written as
\begin{equation}
\psi^\eps(X_i)= \psi(X_i, X_i/\eps)=\psi(X_i, Y_i),
\label{eq:psi-multiscale-assumption}
\end{equation}
where the function $\psi(X_i,\bullet)\in U_\per^{p}(\mathbb Z)$ (i.e., is ``$p$-periodic" in the
$Y$ variable).
This holds, for instance, if we linearize the interaction the original nonlinear model on the displacement $u^0$ (cf.\ \eqref{eq:linearization_of_potential}) that can be expressed as $u^0 = u^0(X_i, X_i/\eps)$.
We also assume that $\psi^\eps$ satisfies
\begin{eqnarray}
\label{equ:coerc_bound_psi}
& & 0<c_{\psi}\le \psi(X_i, Y_j)\le C_{\psi}
\quad \forall X_i\in\eps\mathbb Z, Y_j\in\mathbb Z,\\
\label{equ:bound_dpsi}
& & \|D_X \psi\|_{L^{\infty}(N,p)}\le C'_{\psi}.
\end{eqnarray}

Let $u^0\in U_{\#}^{N}(\eps\mathbb{Z})$ be the solution of \eqref{eq:linear-nn_homogenized}
where the homogenized tensor $\psi^0$ is given by
\[
\psi^0(X_i) = \left<1/\psi(X_i,\bullet)\right>_Y^{-1},
\]
and $\chi(X_i,\bullet)\in U_\per^{p}(\mathbb Z)$ is a solution of \eqref{equ:chi}.
It clearly follows from \eqref{equ:coerc_bound_psi}
that $\psi^0$ is also coercive and bounded, i.e.,
\begin{eqnarray}
\label{equ:coerc_bound_psi0}
& & 0<c_{\psi}\le \psi^0(X_i)\le C_{\psi}
\quad \forall X_i\in\eps\mathbb Z,
\end{eqnarray}

The existence and uniqueness of a solution of \eqref{eq:variational_problem_linear-nn}
\eqref{eq:linear-nn_homogenized-tensor}, and \eqref{equ:chi} follow from standard arguments.
For the sake of completeness we briefly sketch the proof.
\begin{proposition}
\label{prop:original-problem}
Let $\psi$ satisfy \eqref{equ:coerc_bound_psi} and assume $\left<{f}\right>_X=0$.
Then the problems \eqref{eq:variational_problem_linear-nn}
and \eqref{eq:linear-nn_homogenized-tensor} have unique solutions $u,u^0\in U_{\#}^{N}(\eps\mathbb{Z})$
respectively, and the following estimates hold
\begin{eqnarray}
|u|_{H^1} \le c_{\psi}^{-1}|{f}|_{H^{-1}}
,
\label{eq:original-problem:estimate}\\
|{u^0}|_{H^1} \le c_{\psi}^{-1}|{f}|_{H^{-1}}
.
\label{eq:homogenized-problem:estimate}
\end{eqnarray}
\end{proposition}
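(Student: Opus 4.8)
The plan is to prove both existence/uniqueness and the stated a priori estimates by a direct application of the Lax--Milgram theorem in the finite-dimensional Hilbert space $U_{\#}^{N}(\eps\mathbb{Z})$, equipped with the inner product $\left<D u, D v\right>_X$. Since the two problems \eqref{eq:variational_problem_linear-nn} and \eqref{eq:linear-nn_homogenized} have exactly the same structure (a symmetric bilinear form $a(u,v) = \left<\psi^\eps D u, D v\right>_X$, respectively $\left<\psi^0 D u^0, D v\right>_X$, against the linear functional $v\mapsto \left<f,v\right>_X$), I would treat them in parallel and remark that the homogenized case follows verbatim upon replacing $\psi^\eps$ by $\psi^0$ and using the coercivity bound \eqref{equ:coerc_bound_psi0} in place of \eqref{equ:coerc_bound_psi}.

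The key steps are the following. First I would verify that $|\cdot|_{H^1}$ is genuinely a norm on $U_{\#}^{N}(\eps\mathbb{Z})$: the only issue is that $D u = 0$ forces $u$ constant, and the zero-average constraint \eqref{eq:variational_equation_linear-nn_average0} then forces $u\equiv 0$, so $|u|_{H^1}$ is positive-definite on this space. This is where the restriction to the zero-average subspace $U_{\#}^{N}$ (rather than all of $U_\per^{N}$) is essential, and it is also what pins down the otherwise non-unique solution. Second, coercivity of the bilinear form is immediate from the lower bound in \eqref{equ:coerc_bound_psi}: using the component-wise product notation,
\[
a(u,u) = \left<\psi^\eps D u, D u\right>_X \;\ge\; c_\psi \left<D u, D u\right>_X \;=\; c_\psi\,|u|_{H^1}^2 ,
\]
with coercivity constant $c_\psi$. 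Continuity of $a$ follows from the upper bound $C_\psi$ together with Cauchy--Schwarz, and continuity of the functional $v\mapsto\left<f,v\right>_X$ is exactly the statement that it is bounded by $|f|_{H^{-1}}\,|v|_{H^1}$, which is the definition of the $|\cdot|_{H^{-1}}$ norm in \eqref{eq:estimates:norms_fct}. Lax--Milgram then yields a unique $u\in U_{\#}^{N}(\eps\mathbb{Z})$.

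For the a priori bounds I would test the equation against $v = u$ itself. Combining coercivity on the left with the definition of the dual norm on the right gives
\[
c_\psi\,|u|_{H^1}^2 \;\le\; \left<\psi^\eps D u, D u\right>_X \;=\; \left<f,u\right>_X \;\le\; |f|_{H^{-1}}\,|u|_{H^1},
\]
and dividing by $|u|_{H^1}$ (harmless, since the $u\equiv 0$ case makes \eqref{eq:original-problem:estimate} trivially true) produces $|u|_{H^1}\le c_\psi^{-1}|f|_{H^{-1}}$, which is \eqref{eq:original-problem:estimate}. The homogenized estimate \eqref{eq:homogenized-problem:estimate} is identical once \eqref{equ:coerc_bound_psi0} supplies the same coercivity constant $c_\psi$ for $\psi^0$. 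I do not anticipate a genuine obstacle here, as the argument is the standard energy/Lax--Milgram template; the only point requiring a little care is the well-definedness of $|\cdot|_{H^1}$ as a norm on the quotient-by-constants space $U_{\#}^{N}(\eps\mathbb{Z})$ and the verification that the dual-norm pairing with $f$ is legitimate even though $u$ need not have zero average a priori — but \eqref{eq:variational_equation_linear-nn_average0} guarantees $u\in U_{\#}^{N}$, so the supremum in the definition of $|f|_{H^{-1}}$ over $w\in U_{\#}^{N}$ applies directly to $w=u$.
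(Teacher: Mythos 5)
Your proposal is correct and follows essentially the same route as the paper's own proof: restrict the problem to the zero-average space $U_{\#}^{N}(\eps\mathbb{Z})$ (legitimate since $\left<f\right>_X=0$ annihilates constant test functions), obtain coercivity $\left<\psi^\eps Du,Du\right>_X\ge c_\psi|u|_{H^1}^2$ from \eqref{equ:coerc_bound_psi}, apply Lax--Milgram, and repeat verbatim for $\psi^0$ using \eqref{equ:coerc_bound_psi0}. You simply spell out details the paper leaves implicit (positive-definiteness of $|\cdot|_{H^1}$ on $U_{\#}^{N}$ and the test-with-$v=u$ derivation of the a priori bound), which is fine.
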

\begin{proof}
We first notice that, thanks to the condition $\left<{f}\right>_X=0$, $\left<{f},\bullet\right>_X$
is a linear form on $U_{\#}^{N}(\eps\mathbb{Z})$.
Problem \eqref{eq:variational_problem_linear-nn}
can then be rewritten as follows: find $u\in U_{\#}^{N}(\eps\mathbb{Z})$ such that
\[
	\left<{\psi^\eps} D u, D{v}\right>_X
	= \left<{f}, {v}\right>_X
	\quad \forall {v}\in U_{\#}^{N}(\eps\mathbb Z).
\]
Using \eqref{equ:coerc_bound_psi} we have $\left<{\psi} Du, Du\right>_X\ge c_{\psi} |u|_{H^1}^2$
and the Lax-Milgram theorem concludes the proof.
The proof of \eqref{eq:homogenized-problem:estimate} follow the lines of the above proof using \eqref{equ:coerc_bound_psi0}.
\end{proof}
\begin{proposition}
\label{lem:chi_exist}
Let $\psi$ satisfy \eqref{equ:coerc_bound_psi}.
Then \eqref{equ:chi}
has a unique solution $\chi(X_i,\bullet)\in U_{\#}^{p}(\mathbb Z)$.
Moreover, $\chi\in U_\per^{N}(\eps\mathbb{Z})\otimes U_{\#}^{p}(\mathbb{Z})$.
\end{proposition}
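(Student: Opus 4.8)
The plan is to treat the macro-variable $X_i$ as a fixed parameter and to solve \eqref{equ:chi} as a variational problem in the fast variable $Y$ posed on the finite-dimensional Hilbert space $U_{\#}^{p}(\mathbb Z)$, exactly along the lines of Proposition \ref{prop:original-problem}. Concretely, for each fixed $X_i$ I would recast the strong cell problem \eqref{equ:chi} in its weak form \eqref{eq:linear-nn_chi}, namely: find $\chi(X_i,\bullet)\in U_{\#}^{p}(\mathbb Z)$ such that $\left<\psi D_Y \chi, D_Y s\right>_Y = -\left<\psi, D_Y s\right>_Y$ for all $s\in U_{\#}^{p}(\mathbb Z)$. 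The equivalence between the two formulations is obtained by discrete integration by parts (Lemma \ref{lem:integration-by-parts}) in the $Y$ variable, together with the observation that the solvability condition $\left<D_Y\psi\right>_Y=0$ holds automatically by $p$-periodicity of $\psi$ in $Y$, so that the right-hand side is a well-defined functional on the zero-average space.

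Second, I would verify the hypotheses of the Lax-Milgram theorem on $U_{\#}^{p}(\mathbb Z)$ equipped with the norm $|\bullet|_{H^1(p)}$. Boundedness and coercivity of the bilinear form $a(\chi,s)=\left<\psi D_Y\chi, D_Y s\right>_Y$ follow immediately from the two-sided bound \eqref{equ:coerc_bound_psi}: one has $|a(\chi,s)|\le C_\psi\,|\chi|_{H^1}|s|_{H^1}$ and $a(s,s)\ge c_\psi\,|s|_{H^1}^2$, while the load functional $s\mapsto -\left<\psi,D_Y s\right>_Y$ is bounded by $C_\psi\,|s|_{H^1}$. The one point requiring attention is that $|\bullet|_{H^1(p)}$ is a genuine norm, not merely a seminorm, on $U_{\#}^{p}(\mathbb Z)$: this is a discrete Poincar\'e--Wirtinger inequality, valid because a $p$-periodic sequence with $D_Y s=0$ is constant, and the only constant of zero average is zero. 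With this in hand, Lax-Milgram yields a unique $\chi(X_i,\bullet)\in U_{\#}^{p}(\mathbb Z)$.

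Finally, for the regularity statement $\chi\in U_\per^{N}(\eps\mathbb{Z})\otimes U_{\#}^{p}(\mathbb{Z})$, the zero-average $p$-periodicity in $Y$ is built into the solution space, so it remains to establish $N$-periodicity in the slow variable. This I would deduce from uniqueness: by the multiscale assumption \eqref{eq:psi-multiscale-assumption} the coefficient $\psi$ is $N$-periodic in $X$, so the cell problem \eqref{eq:linear-nn_chi} posed at $X_{i+N}$ has the identical coefficient $\psi(X_{i+N},\bullet)=\psi(X_i,\bullet)$ and hence, by the uniqueness just established, the identical solution; thus $\chi(X_{i+N},\bullet)=\chi(X_i,\bullet)$, which is precisely the required $N$-periodicity.

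I expect the argument to be essentially routine, with the only genuine (if minor) obstacle being the discrete Poincar\'e inequality needed to promote the $H^1$-seminorm to a norm on the zero-average space; everything else mirrors the Lax-Milgram argument already carried out for the original and homogenized problems in Proposition \ref{prop:original-problem}.
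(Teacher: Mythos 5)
Your proposal is correct and follows essentially the same route as the paper's own proof: recast the cell problem \eqref{equ:chi} in weak form on $U_{\#}^{p}(\mathbb Z)$ with $X_i$ as a parameter, apply the Lax--Milgram argument of Proposition \ref{prop:original-problem} using the coercivity \eqref{equ:coerc_bound_psi}, and deduce $N$-periodicity in $X$ from uniqueness and the $N$-periodicity of $\psi(X_i,\bullet)$. The only cosmetic difference is that the paper writes the right-hand side of the weak form as $-\left<D_Y\psi,s\right>_Y$ (cf.\ \eqref{equ:chi_variat}) while you use $-\left<\psi,D_Y s\right>_Y$ (cf.\ \eqref{eq:linear-nn_chi}); these agree by discrete integration by parts (Lemma \ref{lem:integration-by-parts}).
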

\begin{proof}
The problem \eqref{equ:chi} can be written as follows: find $\chi(X_i,\bullet)\in U_{\#}^{p}(\mathbb Z)$
such that
\begin{equation}
\label{equ:chi_variat}
	\left<{\psi(X_i,\bullet)} D_Y\chi, D_Y s\right>_Y
	= -\left<D_Y{\psi(X_i,\bullet)},s\right>_Y
	\quad \forall {s}\in U_{\#}^{p}(\mathbb Z).
\end{equation}
As $\psi$ is $p$-periodic in the $Y$ variable, we have $\left<D_Y{\psi(X_i,\bullet)}\right>_Y=0$
and the existence and uniqueness of a solution (depending on $X_i$) can be established as in Proposition \ref{prop:original-problem}.
Notice that $\chi$ depends on $X_i$.
As the equation
\eqref{equ:chi_variat} remains unchanged when $\psi(X_i,\bullet)$ is changed to $\psi(X_{i+N},\bullet)$, we also have $\chi(\bullet,Y_i)\in U_\per^{N}(\eps\mathbb{Z})$.
\end{proof}

Define now the corrector
\begin{equation}
\label{equ:corrector}
u^{\rm c}(X_i)= u^0(X_i) + \epsilon\chi(X_i,X_i/\eps) D_X u^0(X_i).
\end{equation}
In the following subsection we show that $\left|u^{\rm c}-u\right|_{H^1}
\le C_1 \epsilon\|{f}\|_{L^2}$ (Theorem \ref{thm:apriory-H1})
and $\left\|u^0-u\right\|_{L^2} \le C_3 \epsilon \left\|{f}\right\|_{L^2}$ (Theorem \ref{thm:apriory-L2}).

\subsection{Main results}\label{sec:estimates:main_results}
We start with formulating the following two technical lemmas that will be proved in Section \ref{sec:estimates:technical_results}.

\begin{lemma}\label{lem:cell-problem}
Let $\chi$ be the solution of \eqref{equ:chi}.
\begin{itemize}
\item[(a)]
If \eqref{equ:coerc_bound_psi} holds then
\begin{equation}
\|\chi\|_{L^{\infty}(N,p)}\le \frac{p}{2} \frac{C_{\psi}}{c_{\psi}}.
\label{eq:Xchi-estimate}
\end{equation}
\item[(b)]
If both \eqref{equ:coerc_bound_psi} and \eqref{equ:bound_dpsi} hold then
\begin{equation}
|\psi^0|_{W^{1,\infty}(N)}\le \frac{C_{\psi}}{c_{\psi}} C'_{\psi}
,
\qquad \textnormal{and}
\label{eq:psi0_W1infty}
\end{equation}
\begin{equation}
\|D_X \chi\|_{L^{\infty}(N,p)}\le p \frac{C'_{\psi}}{c_{\psi}}.
\label{eq:DXchi-estimate}
\end{equation}
\end{itemize}
\end{lemma}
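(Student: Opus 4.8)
The plan is to prove the three estimates in turn, exploiting the explicit structure of the cell problem \eqref{equ:chi} in the nearest-neighbor case. The key observation is that the first integral \eqref{eq:linear-nn_exact_first_integral} gives $\psi(1+D_Y\chi) = C(X_i)$ with $C = \langle 1/\psi\rangle_Y^{-1} = \psi^0$, so that
\[
D_Y \chi = \frac{\psi^0}{\psi} - 1.
\]
This closed form is what makes all three bounds accessible without invoking Lax--Milgram energy estimates for $\chi$ itself.

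\textbf{Part (a).} First I would bound $D_Y\chi$ pointwise: since $c_\psi \le \psi,\psi^0 \le C_\psi$ (the latter from \eqref{equ:coerc_bound_psi0}), we get $\|D_Y\chi\|_{L^\infty} \le C_\psi/c_\psi - 1 \le C_\psi/c_\psi$ (more carefully, $|\psi^0/\psi - 1|$ is bounded by $C_\psi/c_\psi$). Then I recover $\chi$ from $D_Y\chi$ by discrete summation. Because $\chi \in U_\#^p$ has zero average over $Y$, for any two indices the difference $\chi(Y_k)-\chi(Y_l)$ is a partial sum of $\delta\, D_Y\chi$ terms (here $\delta=1$ since $Y\in\mathbb Z$), and over a period of length $p$ there are at most $p$ steps. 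The zero-average condition lets me write $\chi(Y_j) = \frac1p\sum_l(\chi(Y_j)-\chi(Y_l))$, and each telescoping difference spans at most $p/2$ steps on average when one uses periodicity to go the ``shorter way around.'' This yields the factor $p/2$ in $\|\chi\|_{L^\infty}\le \frac{p}{2}\frac{C_\psi}{c_\psi}$. The main technical care here is the combinatorial bookkeeping that produces the sharp constant $p/2$ rather than a cruder $p$.

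\textbf{Part (b).} For the estimates involving $D_X$ I would differentiate the relations in $X$. For $\psi^0 = \langle 1/\psi\rangle_Y^{-1}$, applying $D_X$ and using the quotient/chain rule for discrete derivatives gives $D_X\psi^0$ in terms of $\langle D_X(1/\psi)\rangle_Y$ and hence of $D_X\psi$. Bounding $|D_X\psi|\le C'_\psi$ via \eqref{equ:bound_dpsi}, together with $c_\psi\le\psi,\psi^0\le C_\psi$, should produce $|\psi^0|_{W^{1,\infty}}\le \frac{C_\psi}{c_\psi}C'_\psi$. For \eqref{eq:DXchi-estimate}, the cleanest route is to differentiate $D_Y\chi = \psi^0/\psi - 1$ in $X$, obtaining $D_Y D_X\chi = D_X(\psi^0/\psi)$, which is controlled by $C'_\psi/c_\psi$ up to constants; then I integrate in $Y$ exactly as in part (a), using that $D_X\chi$ also has zero $Y$-average (since $\langle\chi\rangle_Y=0$ for every $X$, hence $\langle D_X\chi\rangle_Y=0$), to pass from the bound on $D_Y D_X\chi$ to the $L^\infty$ bound on $D_X\chi$, picking up the factor $p$.

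\textbf{The main obstacle} I anticipate is handling the discrete derivative of the ratio $\psi^0/\psi$ and of $\psi^0=\langle1/\psi\rangle_Y^{-1}$ carefully: unlike in the continuum, the discrete product and quotient rules carry translation operators $T$, so $D_X(\psi^0/\psi)$ is not simply $\psi^0 D_X(1/\psi)+\ldots$ but involves $T_X$-shifted factors. Keeping these shifts straight while still extracting clean constants, and verifying the zero-average property survives differentiation so that the summation argument of part (a) can be reused, is where the real work lies. The coercivity and boundedness hypotheses guarantee no denominators vanish, so once the algebra is organized the estimates follow.
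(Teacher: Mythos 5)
Your overall skeleton is the same as the paper's: both proofs rest on the first integral $\psi(1+D_Y\chi)=\psi^0$ together with recovery of a zero-$Y$-average periodic function from its $Y$-derivative; your ``shorter way around'' summation is exactly the paper's explicit kernel representation $\chi(X_i,Y_j)=\psi^0(X_i)\left<g(Y_j-\bullet)/\psi(X_i,\bullet)\right>_Y$ with $g(Y_j)=\frac{p+1}{2}-j$, $|g|\le\frac{p}{2}$ (cf.\ Lemma~\ref{lem:representation} of the Appendix), and part (a) of your proposal is correct and yields the constant in \eqref{eq:Xchi-estimate}.

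In part (b), however, the estimates as you describe them would not produce the constants stated in the lemma, and producing them is precisely the non-routine content of the paper's proof. Concretely, the discrete quotient rule gives $D_X\psi^0=\psi^0(X_i)\,\psi^0(X_{i+1})\left<D_X\psi\big/\big(\psi(X_i,\bullet)\,\psi(X_{i+1},\bullet)\big)\right>_Y$; bounding each factor separately by $c_\psi\le\psi,\psi^0\le C_\psi$ and $|D_X\psi|\le C_\psi'$, as you propose, gives $\frac{C_\psi^2}{c_\psi^2}C_\psi'$, not \eqref{eq:psi0_W1infty}. The stated bound requires keeping one factor $1/\psi(X_{i+1},\bullet)$ inside the $Y$-average and cancelling it \emph{exactly} against $\psi^0(X_{i+1})$ via the identity $\psi^0(X_{i+1})\left<1/\psi(X_{i+1},\bullet)\right>_Y=1$, which also yields the sharper intermediate bound $|D_X\psi^0|\le\psi^0(X_i)\,C_\psi'/c_\psi$. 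The defect is worse for \eqref{eq:DXchi-estimate}: your two-step plan (take the pointwise bound on $D_YD_X\chi=D_X(\psi^0/\psi)$, then multiply by the factor $\sim p/2$ from the $Y$-summation) structurally cannot reach $p\,C_\psi'/c_\psi$, because $D_X(\psi^0/\psi)$ is \emph{not} pointwise of size $C_\psi'/c_\psi$; it carries the factor $\psi^0/\psi$, which can be as large as $\min\bigl(p,\,C_\psi/c_\psi\bigr)$, so the product of the two steps overshoots the target by that factor. The cancellation that saves the paper's constant acts \emph{inside} the $Y$-average, not pointwise: one must keep the representation $D_X\chi(X_i,Y_j)=\sum_{k=1}^p\frac{p+1-2k}{2p}\,D_X(\psi^0/\psi)(X_i,Y_{j-k})$, write the quotient rule with the shifts aligned, $D_X(\psi^0/\psi)=\frac{D_X\psi^0}{\psi(X_i,\bullet)}-\frac{\psi^0(X_{i+1})\,D_X\psi}{\psi(X_i,\bullet)\,\psi(X_{i+1},\bullet)}$, and bound the resulting weighted sums of $1/\psi$ by $\frac{p}{2}\left<1/\psi\right>_Y=\frac{p}{2}\big/\psi^0$, so that every $\psi^0$ cancels and only $\frac{p}{2}\frac{C_\psi'}{c_\psi}+\frac{p}{2}\frac{C_\psi'}{c_\psi}$ survives. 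With these devices your ``differentiate in $X$, then sum in $Y$'' route becomes literally the paper's ``sum, then differentiate'' computation (the two operations commute). As written, your sketch only proves the lemma with constants degraded by factors of $C_\psi/c_\psi$ or $p$ — enough for the generic constants used later in the paper, but not for the statement as given.
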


In what follows, a function of two variables (e.g., $\chi=\chi(X_i, Y_j)$) may be identified with a corresponding function of one variable ($\chi=\chi(X_i, X_i/\epsilon)$).
	Whenever it may cause confusion we will explicitly specify the function space or the norm (i.e., $\left\|\chi\right\|_{L^2(N,p)}$ or $\left\|\chi\right\|_{L^2(N)}$).
\begin{lemma}\label{lem:aposteriory}
Let $u^0\in U_{\#}^{N}$ and $\chi\in U_\per^{N}(\eps\mathbb{Z})\otimes U_{\#}^{p}(\mathbb{Z})$ be the solutions of \eqref{eq:linear-nn_homogenized}
and \eqref{equ:chi}, respectively.
Assume that \eqref{equ:coerc_bound_psi} holds and that $N/p\in\mathbb N$.
Then the corrector
$u^{\rm c}$ defined in \eqref{equ:corrector} belongs to $U_\per^{N}(\eps\mathbb Z)$
and its average is estimated as
\begin{equation}
\left|\left<u^{\rm c}\right>_X\right| \le \epsilon^2 \frac{p}{2} \left\|D_X(\chi D_X u^0)\right\|_{L^1(N,p)}.
\label{eq:aposteriory:uc-average}
\end{equation}
Furthermore, the following estimate holds:
\begin{equation}
\left|u^{\rm c}-u\right|_{H^1}
\le
\epsilon \frac{C_{\psi}}{c_{\psi}} \left\|D_X (T_Y\chi D_X u^0)\right\|_{L^2(N)}.
\label{eq:aposteriory:estimateH1}
\end{equation}
\end{lemma}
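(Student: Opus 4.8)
The plan is to prove the three assertions in turn: periodicity of $u^{\rm c}$ and the average bound by direct discrete manipulations, and the $H^1$ estimate by a discrete energy argument that exploits the exact first integral \eqref{eq:linear-nn_exact_first_integral}. I would begin with periodicity, checking $u^{\rm c}(X_{i+N})=u^{\rm c}(X_i)$. Since $u^0$ and $D_X u^0$ are $N$-periodic and $\chi(\bullet,Y_j)\in U_\per^{N}(\eps\mathbb Z)$, only the fast argument needs attention: evaluating at $X_{i+N}$ turns $Y_i=X_i/\eps$ into $Y_{i+N}=Y_i+N$, and since $\chi$ is $p$-periodic in $Y$ and $N/p\in\mathbb N$, the shift by $N$ is a multiple of $p$ and leaves $\chi$ unchanged. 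This is precisely where the hypothesis $N/p\in\mathbb N$ enters, and it also guarantees that $u^{\rm c}-u$ will be an admissible test function later.

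For the average bound, note that $\langle u^0\rangle_X=0$, so $\langle u^{\rm c}\rangle_X=\eps\,\tfrac1N\sum_{i=1}^N w(X_i,Y_i)$ with $w(X_i,Y_j)=\chi(X_i,Y_j)D_X u^0(X_i)$, which is $p$-periodic with zero $Y$-average for each fixed $X_i$. I would split the sum into $M=N/p$ consecutive blocks of $p$ atoms; on each block the fast index runs through a full period, so replacing $X_i$ by a block reference point makes the block sum vanish by the zero $Y$-average of $\chi$. The remainder telescopes into increments $\eps\,D_X w$, and summing the arithmetic-series weights $\sum_l (l-1)$ over a block produces the factor $\tfrac p2$ and the extra power of $\eps$, giving \eqref{eq:aposteriory:uc-average}.

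The heart of the lemma is the $H^1$ estimate. Writing $u^{\rm c}=u^0+\eps\Theta$ with $\Theta=\chi D_X u^0$, I use the total-derivative identity \eqref{eq:derivative-full-through-partial} together with $Du^0=D_X u^0$ and $D_Y\Theta=(D_Y\chi)D_X u^0$ to obtain the \emph{exact} expression
\[
D u^{\rm c}=(1+D_Y\chi)\,D_X u^0+\eps\,D_X T_Y\Theta .
\]
Multiplying by $\psi^\eps$ and invoking the first integral \eqref{eq:linear-nn_exact_first_integral}, namely $\psi^\eps(1+D_Y\chi)=\psi^0$ on the diagonal, collapses the leading term onto the homogenized flux:
\[
\psi^\eps D u^{\rm c}=\psi^0 D_X u^0+\eps\,\psi^\eps D_X T_Y\Theta .
\]
Since $u^0$ solves \eqref{eq:linear-nn_homogenized} and $u$ solves \eqref{eq:variational_problem_linear-nn}, their weak forms share the same right-hand side $\langle f,v\rangle_X$; testing both against $v\in U_\per^{N}(\eps\mathbb Z)$ and subtracting leaves the exact residual identity
\[
\langle\psi^\eps D_X(u^{\rm c}-u), D_X v\rangle_X=\eps\,\langle\psi^\eps D_X T_Y\Theta, D_X v\rangle_X .
\]
Choosing $v=u^{\rm c}-u$ (admissible by the periodicity proved above), applying coercivity $\psi^\eps\ge c_\psi$ on the left, boundedness $\psi^\eps\le C_\psi$ with Cauchy--Schwarz on the right, and cancelling one power of $|u^{\rm c}-u|_{H^1}$ yields \eqref{eq:aposteriory:estimateH1}, upon noting $T_Y\Theta=(T_Y\chi)D_X u^0$.

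The crux is the exact residual computation: one must recognize that the 1D nearest-neighbor first integral \eqref{eq:linear-nn_exact_first_integral} makes the leading-order flux of $u^{\rm c}$ coincide \emph{exactly} with $\psi^0 D_X u^0$, so that no lower-order terms are dropped and the residual reduces to the single explicit $O(\eps)$ term $\eps\,\psi^\eps D_X T_Y\Theta$; this is what produces a clean bound without any intermediate $H^{-1}$ estimate. The only other delicate point is the bookkeeping of the telescoping weights in the average bound needed to reach the precise constant $p/2$.
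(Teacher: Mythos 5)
Your periodicity argument and your $H^1$ estimate are correct, and the $H^1$ part is a genuine (if minor) variant of the paper's route: you use the same key identity
$\psi^\eps D u^{\rm c}=\psi^0 D_X u^0+\eps\,\psi^\eps D_X T_Y(\chi D_X u^0)$,
obtained from the first integral \eqref{eq:linear-nn_exact_first_integral}, but you then subtract the two weak forms and test with $v=u^{\rm c}-u$, using coercivity and Cauchy--Schwarz directly. The paper instead passes to the strong form, subtracts $\left<u^{\rm c}\right>_X$ so that $u^{\rm c}-\left<u^{\rm c}\right>_X-u\in U_{\#}^{N}(\eps\mathbb{Z})$, and invokes the $H^{-1}$ stability estimate of Proposition \ref{prop:original-problem}. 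The two arguments are equivalent; yours avoids the $H^{-1}$ detour and the mean-subtraction, since constants lie in the kernel of the bilinear form, and it lands on exactly the constant $C_\psi/c_\psi$ of \eqref{eq:aposteriory:estimateH1}.

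The gap is in the average bound \eqref{eq:aposteriory:uc-average}. Your block argument (fix one reference point $X_{i_0}$ per block of $p$ atoms, use $\left<\chi(X_{i_0},\bullet)\right>_Y=0$, telescope the remainder) is sound in spirit, but the weight bookkeeping does not deliver the stated bound: the telescoped terms $|D_X w(X_k,Y_i)|$ form a set of grid points $(X_k,Y_j)$ each hit \emph{at most once}, and bounding a sum over roughly half (or a quarter) of the grid points by the full $L^1$ sum only yields the factor $p$, not $p/2$; your ``arithmetic-series weights give $p/2$'' step is valid only if all $|D_X w|$ values are comparable, i.e., it proves an $L^\infty$-type bound, which is weaker than the claimed $L^1$ bound. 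The paper avoids this by comparing the diagonal average against the full two-scale average,
$\left<v(X_i,X_i/\eps)\right>_X=\left<v(X_i,X_i/\eps)-v\right>_{XY}$,
which amounts to averaging over \emph{all} shifts $j$ in a period; then for each fixed grid point the number of shifts $j$ whose telescoping sum passes through it is at most $p/2$, and this multiplicity count (not a subset-size count) is what produces the constant $p/2$ in front of the $L^1(N,p)$ norm. Your proof can be repaired either by averaging your block identity over all $p$ choices of reference point (which reproduces the paper's count), or by accepting the weaker constant $p$, which would suffice for the subsequent theorems but not for the lemma as stated.
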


\begin{theorem}
\label{thm:apriory-H1}
Assume that $\left<f\right>_X=0$, $N/p\in\mathbb N$, and that \eqref{equ:coerc_bound_psi} and \eqref{equ:bound_dpsi} hold.
Then there exist constants $C_1$, $C_2$ such that
\begin{eqnarray}
\left|u^{\rm c}-u\right|_{H^1}
& \le & \epsilon C_1 \|{f}\|_{L^2}
,
\qquad\textnormal{and}
\label{eq:apriory-H1:norm-estimate}
\\
\left|\left<u^{\rm c}\right>_X\right|
& \le &
\epsilon^2 C_2 \|{f}\|_{L^2},
\label{eq:apriory-H1:average-estimate}
\end{eqnarray}
where $u^{\rm c}$ is the corrector defined in \eqref{equ:corrector}.
\end{theorem}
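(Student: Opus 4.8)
The plan is to derive both a priori estimates directly from the a posteriori estimates of Lemma~\ref{lem:aposteriory} by controlling their right-hand sides in terms of $\|f\|_{L^2}$. The inputs already available are the $L^\infty$ bounds on $\chi$ and $D_X\chi$ from Lemma~\ref{lem:cell-problem}, the $W^{1,\infty}$ bound on $\psi^0$ from Lemma~\ref{lem:cell-problem}(b), and the $H^1$ stability bound from Proposition~\ref{prop:original-problem}. The only genuinely new ingredient is an $H^2$-regularity bound for the homogenized solution $u^0$, and it is precisely here that the gradient assumption \eqref{equ:bound_dpsi} enters (it was not needed in Lemma~\ref{lem:aposteriory}).

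First I would establish $|u^0|_{H^2}\le C\|f\|_{L^2}$. Starting from the strong form \eqref{eq:linear-homogenized-nn}, namely $-D_X(\psi^0 D_X u^0)=T_X f$, the discrete Leibniz rule $D_X(ab)=(D_X a)(T_X b)+a\,D_X b$ gives $\psi^0 D_X^2 u^0 = -T_X f-(D_X\psi^0)(T_X D_X u^0)$. Dividing by $\psi^0$, using coercivity \eqref{equ:coerc_bound_psi0}, the translation invariance of the $L^2$ norm, and the bound $|\psi^0|_{W^{1,\infty}}\le (C_\psi/c_\psi)C'_\psi$ from \eqref{eq:psi0_W1infty}, I obtain $|u^0|_{H^2}\le c_\psi^{-1}\bigl(\|f\|_{L^2}+(C_\psi/c_\psi)C'_\psi\,|u^0|_{H^1}\bigr)$. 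Combining this with $|u^0|_{H^1}\le c_\psi^{-1}|f|_{H^{-1}}$ from \eqref{eq:homogenized-problem:estimate} and a discrete Poincar\'e inequality $|f|_{H^{-1}}\le C\|f\|_{L^2}$ (valid for zero-mean $f$ on the fixed macroscopic period, with constant independent of $\epsilon$) yields $|u^0|_{H^2}\le C\|f\|_{L^2}$.

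Next I would bound the right-hand sides of \eqref{eq:aposteriory:uc-average} and \eqref{eq:aposteriory:estimateH1}. Applying the Leibniz rule to $D_X(\chi D_X u^0)=(D_X\chi)(T_X D_X u^0)+\chi\,D_X^2 u^0$, and likewise to $D_X(T_Y\chi\, D_X u^0)$ using that $T_Y$ commutes with $D_X$ and preserves $L^\infty$, produces two families of terms: one with $D_X\chi$ against a translate of $D_X u^0$, the other with $\chi$ against $D_X^2 u^0$. Estimating the $\chi$-factor in $L^\infty$ and the $u^0$-factor in $L^2$, and using $\|g\|_{L^1}\le\|g\|_{L^2}$ together with translation invariance, I get control of the form $\|D_X\chi\|_{L^\infty}|u^0|_{H^1}+\|\chi\|_{L^\infty}|u^0|_{H^2}$. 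Inserting $\|\chi\|_{L^\infty}\le (p/2)(C_\psi/c_\psi)$ and $\|D_X\chi\|_{L^\infty}\le p\,C'_\psi/c_\psi$ from \eqref{eq:Xchi-estimate} and \eqref{eq:DXchi-estimate}, together with the $H^1$ and $H^2$ bounds on $u^0$, bounds both right-hand sides by $C\|f\|_{L^2}$; multiplying by the prefactors $\epsilon\,C_\psi/c_\psi$ and $\epsilon^2 p/2$ supplied by Lemma~\ref{lem:aposteriory} yields \eqref{eq:apriory-H1:norm-estimate} and \eqref{eq:apriory-H1:average-estimate}.

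The main obstacle is the $H^2$ estimate of the second paragraph: it is the only step requiring \eqref{equ:bound_dpsi} (through the $W^{1,\infty}$ bound on $\psi^0$), and it must be executed so that the constant is independent of $\epsilon$, which in particular means the discrete Poincar\'e constant has to be controlled uniformly in $\epsilon$ on the fixed macroscopic period. Once $H^2$ regularity of $u^0$ is secured, the remainder reduces to routine applications of the discrete product rule and the norm inequalities above.
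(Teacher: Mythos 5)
Your proposal is correct and follows essentially the same route as the paper: both start from the a posteriori estimates of Lemma \ref{lem:aposteriory}, split $D_X(\chi D_X u^0)$ by the discrete product rule, control the $\chi$-factors in $L^\infty$ via Lemma \ref{lem:cell-problem}, convert $|f|_{H^{-1}}$ to $\|f\|_{L^2}$ via Lemma \ref{lem:Hminus1-through-L2}, and obtain the $H^2$ bound $\|D_X^2 u^0\|_{L^2}\le C_0\|f\|_{L^2}$ from the strong form \eqref{eq:linear-homogenized-nn} together with \eqref{equ:coerc_bound_psi0} and \eqref{eq:psi0_W1infty}. The only difference is organizational (you establish the $H^2$ regularity up front, the paper records it inline), and your identification of \eqref{equ:bound_dpsi} as entering solely through that step is accurate.
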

\begin{proof}
To show \eqref{eq:apriory-H1:norm-estimate} we need to estimate the right-hand side of \eqref{eq:aposteriory:estimateH1}:
\begin{align*}
\epsilon^{-1} \left|u^{\rm c}-u\right|_{H^1}
\le~&
\frac{C_{\psi}}{c_{\psi}} \left\|D_X \left(\chi(X_i; Y_{i+1}) D_X u^0(X_i)\right)\right\|_{L^2(N)}
\\ \le~&
\frac{C_{\psi}}{c_{\psi}} \left\|\left(D_X \chi(X_i; Y_{i+1})\right)D_X u^0(X_i)\right\|_{L^2(N)}
\\ ~& +
\frac{C_{\psi}}{c_{\psi}} \left\|\chi(X_{i+1}; Y_{i+1}) D_X^2 u^0(X_i)\right\|_{L^2(N)}
\\ \le~&
\frac{C_{\psi}}{c_{\psi}}\,p \frac{C'_{\psi}}{c_{\psi}}\,c_{\psi}^{-1}|{f}|_{H^{-1}}
+
\frac{C_{\psi}}{c_{\psi}}\,\frac{p}{2} \frac{C_{\psi}}{c_{\psi}} C_0 \|{f}\|_{L^2}
\\ \le~&
\frac{C_{\psi}}{c_{\psi}}\,p \frac{C'_{\psi}}{c_{\psi}} c_{\psi}^{-1} \frac{1}{2\sqrt{3}} \|{f}\|_{L^2}
+
\frac{C_{\psi}}{c_{\psi}}\,\frac{p}{2} \frac{C_{\psi}}{c_{\psi}} C_0 \|{f}\|_{L^2},
\end{align*}
where we used \eqref{eq:DXchi-estimate}, \eqref{eq:homogenized-problem:estimate}, and \eqref{eq:DXchi-estimate}
to estimate $D_X \chi,D_X u^0$, and $\chi$, respectively, and also \eqref{eq:Hminus1-through-L2} (Lemma \ref{lem:Hminus1-through-L2}) to estimate $|{f}|_{H^{-1}}$ through $\|{f}\|_{L^2}$.
Notice that we used the estimate
\[
\left\|D_X^2 u^0(X_i)\right\|_{L^2(N)} \le C_0 \|{f}\|_{L^2},
\]
which can be obtained with the help of \eqref{equ:coerc_bound_psi0}:
\[
\left\|D^2 u^0\right\|_{L^2}
\le c_{\psi}^{-1} \left\|{\psi}^0 D^2 u^0\right\|_{L^2}
=c_{\psi}^{-1} \left\|D\left({\psi}^0 D u^0\right)-(D{\psi}^0) (D u^0)\right\|_{L^2},
\]
by estimating the terms $\left\|D\left({\psi}^0 D u^0\right)\right\|_{L^2}$, $\|D{\psi}^0\|_{L^\infty}$, and $\left\|D u^0\right\|_{L^2}$ using
\eqref{eq:linear-homogenized-nn}, \eqref{eq:psi0_W1infty}, and \eqref{eq:original-problem:estimate}, respectively.

To show \eqref{eq:apriory-H1:average-estimate} we need to estimate the right-hand side of \eqref{eq:aposteriory:uc-average}:
\begin{align*}
\epsilon^{-2} \left|\left<u^{\rm c}\right>_i\right|
\le~&
\frac{p}{2} \left\|D_X(\chi D_X u^0)\right\|_{L^1(N,p)}
\le
\frac{p}{2} \left\|D_X(\chi D_X u^0)\right\|_{L^2(N,p)}
\\ \le~&
\frac{p}{2} \left\|(D_X \chi) D_X u^0\right\|_{L^2(N,p)}
+
\frac{p}{2} \left\|(T_X \chi) D^2_X u^0\right\|_{L^2(N,p)}
\\ \le~&
\frac{p}{2}\,p \frac{C'_{\psi}}{c_{\psi}}\,c_{\psi}^{-1} \left|{f}\right|_{H^{-1}}
+
\frac{p}{2}\,\frac{p}{2}\frac{C_{\psi}}{c_{\psi}} C_0 \left\|{f}\right\|_{L^2}
\\ \le~&
\frac{p}{2}\, p \frac{C'_{\psi}}{c_{\psi}}\,c_{\psi}^{-1} \frac{1}{2\sqrt{3}} \left\|{f}\right\|_{L^2}
+
\frac{p}{2}\,\frac{p}{2}\frac{C_{\psi}}{c_{\psi}} C_0 \left\|{f}\right\|_{L^2}.
\end{align*}
\end{proof}

\begin{theorem}
\label{thm:apriory-L2}
Assuming the hypotheses of Theorem \ref{thm:apriory-H1}, there exists a constant $C_3$ such that
\[
\left\|u^0-u\right\|_{L^2} \le C_3 \epsilon \left\|{f}\right\|_{L^2},
\]
\end{theorem}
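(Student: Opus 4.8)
The plan is to estimate $\|u^0-u\|_{L^2}$ by inserting the corrector $u^{\rm c}$ as an intermediary and applying the triangle inequality,
\[
\|u^0-u\|_{L^2}\le \|u^0-u^{\rm c}\|_{L^2}+\|u^{\rm c}-u\|_{L^2}.
\]
The first term I would handle directly: from the definition \eqref{equ:corrector} we have $u^0-u^{\rm c}=-\epsilon\,\chi\,D_X u^0$, so that $\|u^0-u^{\rm c}\|_{L^2}\le \epsilon\,\|\chi\|_{L^{\infty}(N,p)}\,\|D_X u^0\|_{L^2}$. I would bound $\|\chi\|_{L^{\infty}}$ by Lemma \ref{lem:cell-problem}(a) and $\|D_X u^0\|_{L^2}=|u^0|_{H^1}$ by the a priori estimate \eqref{eq:homogenized-problem:estimate} of Proposition \ref{prop:original-problem} together with the $H^{-1}$-to-$L^2$ bound \eqref{eq:Hminus1-through-L2}; this shows the first term is $O(\epsilon)\|{f}\|_{L^2}$.

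The second, genuinely multiscale term $\|u^{\rm c}-u\|_{L^2}$ is where the work lies. Theorem \ref{thm:apriory-H1} only controls the seminorm $|u^{\rm c}-u|_{H^1}$, so I need a discrete Poincaré inequality to recover the full $L^2$ norm. The obstruction is that $u^{\rm c}$ need not have exactly zero average, whereas $u\in U_{\#}^{N}(\eps\mathbb Z)$ does; I therefore split
\[
u^{\rm c}-u=\bigl(u^{\rm c}-u-\left<u^{\rm c}\right>_X\bigr)+\left<u^{\rm c}\right>_X,
\]
where the first summand lies in $U_{\#}^{N}(\eps\mathbb Z)$ and the second is a constant. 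The Poincaré inequality $\|w\|_{L^2}\le C\,|w|_{H^1}$ for $w\in U_{\#}^{N}(\eps\mathbb Z)$ — which follows by duality from the $H^{-1}$-to-$L^2$ estimate \eqref{eq:Hminus1-through-L2}, with an $\eps$-independent constant since the physical period $N\eps$ is fixed — then gives $\|u^{\rm c}-u-\left<u^{\rm c}\right>_X\|_{L^2}\le C\,|u^{\rm c}-u|_{H^1}$, while the constant summand contributes exactly $|\left<u^{\rm c}\right>_X|$ in $L^2$.

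Both contributions are then read off from Theorem \ref{thm:apriory-H1}: the seminorm estimate \eqref{eq:apriory-H1:norm-estimate} gives $|u^{\rm c}-u|_{H^1}\le \epsilon C_1\|{f}\|_{L^2}$, and the average estimate \eqref{eq:apriory-H1:average-estimate} gives $|\left<u^{\rm c}\right>_X|\le \epsilon^2 C_2\|{f}\|_{L^2}$, the latter being of higher order and hence harmless. Collecting the three bounds and absorbing all $\eps$-independent factors (depending only on $c_\psi,C_\psi,C'_\psi,p$ and the fixed domain size) into a single constant $C_3$ yields $\|u^0-u\|_{L^2}\le C_3\,\epsilon\,\|{f}\|_{L^2}$. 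I expect the only real subtlety to be the justification of the discrete Poincaré inequality with an $\eps$-uniform constant and the clean treatment of the nonzero average of $u^{\rm c}$; once these are in place the estimate is an immediate consequence of the $H^1$-seminorm result of Theorem \ref{thm:apriory-H1}.
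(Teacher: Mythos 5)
Your proposal is correct and follows essentially the same route as the paper: the triangle inequality through the corrector $u^{\rm c}$, the identity $u^0-u^{\rm c}=-\epsilon\chi D_X u^0$ estimated via Lemma \ref{lem:cell-problem}(a), Proposition \ref{prop:original-problem} and Lemma \ref{lem:Hminus1-through-L2}, and the discrete Poincar\'e inequality combined with the $H^1$-seminorm estimate \eqref{eq:apriory-H1:norm-estimate} for the remaining term. The one point where you are actually more careful than the paper: the paper applies the Poincar\'e inequality (Corollary \ref{cor:discrete-poincare-for-Unsharp}) directly to $u^{\rm c}-u$, which need not have zero average, whereas you split off the constant $\left<u^{\rm c}\right>_X$ and control it by the average estimate \eqref{eq:apriory-H1:average-estimate} --- this is precisely the role that estimate is designed to play in Theorem \ref{thm:apriory-H1}, and your version closes that small imprecision in the published argument.
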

\begin{proof}
Using Theorem \ref{thm:apriory-H1} yields:
\begin{align*}
\left\|u^0-u\right\|_{L^2}
\le~&
\left\|u^0-u^{\rm c}\right\|_{L^2}
+
\left\|u^{\rm c}-u\right\|_{L^2}
\le
\left\|\epsilon{\chi} D_X u^0\right\|_{L^2}
+
\frac{1}{2\sqrt{3}}\left|u^{\rm c}-u\right|_{H^1}
\\ \le~&
\epsilon\,\frac{p}{2} \frac{C_{\psi}}{c_{\psi}} c_{\psi}^{-1}\|{f}\|_{H^{-1}}
+
\epsilon \frac{C_1}{2\sqrt{3}} \left\|{f}\right\|_{L^2}
\\ \le~&
\epsilon\,\frac{p}{2} \frac{C_{\psi}}{c_{\psi}} c_{\psi}^{-1} \frac{1}{2\sqrt{3}} \left\|{f}\right\|_{L^2}
+
\epsilon \frac{C_1}{2\sqrt{3}} \left\|{f}\right\|_{L^2}.
\end{align*}
\end{proof}

\subsection{Proof of Technical Lemmas}\label{sec:estimates:technical_results}

\begin{proof}[Proof of lemma \ref{lem:cell-problem}.]
In a straightforward, but very tedious calculation, one can derive, using \eqref{eq:linear-nn_exact_DYchi}, \eqref{eq:linear-nn_exact:constC}, and \eqref{eq:linear-nn_exact:homogenized-tensor}, the exact representation
\[
\chi(X_i, Y_j) = \left<1/\psi\right>_Y^{-1} \sum_{\beta=j-p}^{j-1} \frac{p+1 - 2 (j-\beta)}{2 p} \frac{1}{\psi(X_i, Y_{\beta})}
=
\psi^0(X_i) \left<\frac{g(Y_j-\bullet)}{\psi(X_i, \bullet)} \right>_Y
,
\]
where $g\in U_{\#}^p$ is defined as $g(Y_j) = \frac{p+1}{2} - j$ for $1\le j\le p$.
Hence \eqref{eq:Xchi-estimate} holds:
\[
\left|\chi(X_i, Y_j)\right|
\le
\psi^0(X_i)\, \|g(X_i, \bullet)\|_{L^{\infty}(p)}\, \left<\frac{1}{\psi(X_i, \bullet)} \right>_Y
\le
C_{\psi} \, \frac{p}{2} \, \frac{1}{c_{\psi}}
.
\]

To show \eqref{eq:psi0_W1infty} notice that
\begin{align*}
D_X \psi^0(X_i)
=~&
D_X \left<\frac{1}{\psi(X_i, \bullet)}\right>_Y^{-1}
\\ =~&
	- \left<\frac{1}{\psi(X_i, \bullet)}\right>_Y^{-1}
	\left<\frac{1}{\psi(X_{i+1}, \bullet)}\right>_Y^{-1}
	D_X \left<\frac{1}{\psi(X_i, \bullet)}\right>_Y
\\
=~&
	\psi^0(X_i) \psi^0(X_{i+1})
	\left<\frac{D_X \psi(X_i, \bullet)}{\psi(X_i, \bullet) \psi(X_{i+1}, \bullet)}\right>_Y
,
\end{align*}
and hence if we additionally assume \eqref{equ:bound_dpsi} then
\begin{align*}
\left|D_X \psi^0(X_i)\right|
\le ~&
	\psi^0(X_i) \psi^0(X_{i+1})
\left|
	\left<\frac{1}{\psi(X_{i+1}, \bullet)}\right>_Y
\right| \frac{C'_{\psi}}{c_{\psi}}
\\ =~&
	\frac{\psi^0(X_i) \psi^0(X_{i+1})}{\psi^0(X_{i+1})}
	\frac{C'_{\psi}}{c_{\psi}}
\le
	\frac{C_{\psi}}{c_{\psi}} C'_{\psi}.
\end{align*}

To show \eqref{eq:DXchi-estimate} notice that
\begin{align*}
D_X \chi(X_i, Y_j)
=~&
	D_X \psi^0(X_i)
	\left<g(Y_j-\bullet) \frac{1}{\psi(X_i, \bullet)}\right>_Y
\\ ~&
	+
	\psi^0(X_{i+1})
	\left<g(Y_j-\bullet) D_X \frac{1}{\psi(X_i, \bullet)}\right>_Y
\\ = ~&
	\psi^0(X_i) \psi^0(X_{i+1}) \left<\frac{D_X \psi(X_i,\bullet)}{\psi(X_i, \bullet) \psi(X_{i+1}, \bullet)}\right>_Y
	\left<g(Y_j-\bullet) \frac{1}{\psi(X_i, \bullet)}\right>_Y
\\ ~&
	+
	\psi^0(X_{i+1})
	\left<g(Y_j-\bullet) \frac{D_X \psi(X_i,\bullet)}{\psi(X_i, \bullet) \psi(X_{i+1}, \bullet)}\right>_Y
,
\end{align*}
and use \eqref{equ:coerc_bound_psi}, \eqref{equ:bound_dpsi}, and $|g|\le \frac{p}{2}$ to estimate
\begin{align*}
\left|D_X \chi(X_i, Y_j)\right|
\le~&
	\psi^0(X_i) \psi^0(X_{i+1}) \frac{C'_{\psi}}{c_{\psi}} \left<\frac{1}{\psi(X_{i+1}, \bullet)}\right>_Y
	\frac{p}{2} \left<\frac{1}{\psi(X_i, \bullet)}\right>_Y
\\ ~& +
	\psi^0(X_{i+1})
	\frac{p}{2} \frac{C'_{\psi}}{c_{\psi}} \left<\frac{1}{\psi(X_{i+1}, \bullet)}\right>_Y
\\ = ~&
	\frac{C'_{\psi}}{c_{\psi}} \frac{p}{2}
+
	\frac{p}{2} \frac{C'_{\psi}}{c_{\psi}}
=	p \frac{C'_{\psi}}{c_{\psi}}.
\end{align*}
\end{proof}

\begin{proof}[Proof of lemma \ref{lem:aposteriory}.]
Under the condition $N/p\in\mathbb N$ it immediately follows from \eqref{equ:corrector} that $u^{\rm c}(X_{i+N}) = u^{\rm c}(X_i)$, hence $u^{\rm c}\in U_\per^{N}(\eps\mathbb Z)$.

Denote $v = \chi(X_i,Y_j) D_X u^0(X_i)$, so that $u^{\rm c} = u^0(X_i) + \epsilon v(X_i, X_i/\epsilon)$.
Since $\left<\chi\right>_Y=0$,
\[
0 = \left<v\right>_{XY}
=
\frac{\epsilon}{p}\sum\limits_{i=1}^N \sum\limits_{j=-\lceil p/2\rceil+1}^{\lfloor p/2\rfloor}
	v(X_i, X_i/\epsilon-j)
=
\frac{\epsilon}{p}\sum\limits_{i=1}^N \sum\limits_{j=-\lceil p/2\rceil+1}^{\lfloor p/2\rfloor}
	v(X_i+\epsilon j, X_i/\epsilon)
	.
\]
Hence express
\begin{align*}
\left<v(X_i, X_i/\epsilon)\right>_X
=~&
\left<v(X_i, X_i/\epsilon) -v \right>_{XY}
\\ =~&
\frac{1}{N p}\sum\limits_{i=1}^N \sum\limits_{j=-\lceil p/2\rceil+1}^{\lfloor p/2\rfloor}
	\left(v(X_i, X_i/\epsilon) - v(X_i+\epsilon j, X_i/\epsilon)\right)
\end{align*}
and estimate the terms in the parenthesis (for $-\lceil p/2\rceil+1\le j\le \lfloor p/2\rfloor$):
\[
\frac{v(X_i, X_i/\epsilon) - v(X_i+\epsilon j, X_i/\epsilon)}{\epsilon}
=
\left\{
\begin{array}{ll}
- \sum\limits_{k=0}^{j-1} D_X v(X_i+\epsilon k, X_i/\epsilon)
&~ j>0
\\
\sum\limits_{k=j}^{-1} D_X v(X_i+\epsilon k, X_i/\epsilon)
&~ j<0
\\
0 &~ j=0,
\end{array}
\right.
\]
\[
|v(X_i, X_i/\epsilon) - v(X_i+\epsilon j, X_i/\epsilon)|
\le
\epsilon\sum\limits_{k=-\lceil p/2\rceil+1}^{\lfloor p/2\rfloor} |D_X v(X_i + k\epsilon, X_i/\epsilon)|.
\]
Thus,
\begin{align*}
\left|\left<v(X_i, X_i/\epsilon)\right>_X\right|
\le ~&
\epsilon\,\frac{1}{N p}\sum\limits_{i=1}^N \sum\limits_{j=-\lceil p/2\rceil+1}^{\lfloor p/2\rfloor}
	\sum\limits_{k=-\lceil p/2\rceil+1}^{\lfloor p/2\rfloor} |D_X v(X_i + k\epsilon, X_i/\epsilon)|
\\ \le~&
\epsilon\,\frac{1}{N p} \sum\limits_{i=1}^N \frac{p}{2}
	\sum\limits_{k=-\lceil p/2\rceil+1}^{\lfloor p/2\rfloor} |D_X v(X_i + k\epsilon, X_i/\epsilon)|
\\ =~&
\frac{\epsilon p}{2}\, \frac{1}{N p} \sum\limits_{i=1}^N
	\sum\limits_{k=-\lceil p/2\rceil+1}^{\lfloor p/2\rfloor} |D_X v(X_i, X_i/\epsilon - k)|
=
\frac{\epsilon p}{2}\, \left<|D_X v|\right>_{XY},
\end{align*}
substituting which into the definition of $u^{\rm c}$ finishes the proof of \eqref{eq:aposteriory:uc-average}:
\[
\left|\left<u^{\rm c}\right>_X\right| \le \left|\left<u^0(X_i)\right>_X\right| + \epsilon \left|\left<v(X_i, X_i/\epsilon)\right>_X\right|
\le 0 + \frac{\epsilon^2 p}{2} \left<|D_X v|\right>_{XY}.
\]

To show \eqref{eq:aposteriory:estimateH1} we first compute, using \eqref{eq:linear-nn_exact_first_integral}, \eqref{eq:linear-nn_exact:constC}, and \eqref{eq:linear-nn_exact:homogenized-tensor},
\begin{align*}
\psi^{\epsilon} D_X u^{\rm c}
=~& \psi^{\epsilon} (D_X T_Y + \epsilon^{-1} D_Y) \left(u^0 + \epsilon\chi D_X u^0\right)
\\ =~& \psi^{\epsilon} D_X u^0 + \psi^{\epsilon} D_Y \chi D_X u^0 + \epsilon \psi^{\epsilon} D_X T_Y (\chi D_X u^0)
\\ =~&
\psi^{\epsilon} (1 + D_Y \chi) D_X u^0
+ \epsilon \psi^{\epsilon} D_X T_Y (\chi D_X u^0)
\\ =~&
\psi^0 D_X u^0
+ \epsilon \psi^{\epsilon} D_X T_Y (\chi D_X u^0)
,
\end{align*}

Hence compute, using \eqref{eq:variational_equation_linear-nns}, \eqref{eq:linear-homogenized-nn}, and the fact that $D_X \left<u^{\rm c}\right>_X = 0$,
\[
-D_X (\psi^{\epsilon} D_X (u^{\rm c}-\left<u^{\rm c}\right>_X - u))
=
- \epsilon D_X \left(\psi^{\epsilon} D_X T_Y (\chi D_X u^0)\right)
.
\]
Treating this as an equation for $(u^{\rm c}-\left<u^{\rm c}\right>_X - u) \in U_{\#}^{N}(\eps\mathbb{Z})$, upon using proposition \ref{prop:original-problem} one gets
\[
|u^{\rm c}-u|_{H^1}
\le \frac{1}{c_{\psi}} |\epsilon D_X \left(\psi^{\epsilon} D_X T_Y (\chi D_X u^0)\right)|_{H^{-1}}
\le \frac{\epsilon C_{\psi}}{c_{\psi}} \|D_X T_Y (\chi D_X u^0)\|_{L^2}.
\]
\end{proof}

\section{Homogenized QC for Complex Lattices}\label{sec:HQC}

We formulate the homogenized quasicontinuum method (HQC) --- the QC method for complex crystalline materials --- in a framework of homogenization.
We introduce HQC in the general 1D periodic case, i.e., with finite range nonlinear interaction (see \eqref{eq:variational_problem_nonlinear}),
and possibly oscillating external force $f=f^{\epsilon}$.
For the case of materials with known periodic structure (i.e., crystalline materials), the HQC method will be equivalent to applying QC to the homogenized equations.
We mention however two advantages of HQC.
First, the method is based on the original equation describing the spatially oscillating material and does not rely on effective (homogenized) equation derived beforehand.
Such strategies have proved successful in the continuum elasticity where many macro to micro methods averaging the effective equations on the fly have been derived (see the review paper \cite{GeersKouznetsovaBrekelmans2010} and the references therein).
Second, we think that the HQC can be applied to non-crystalline materials and to time-dependent zero- or even finite-temperature problems.

\subsection{HQC Method}
Consider the problem of finding an equilibrium of an atomistic material in the general 1D periodic case (i.e., with finite range nonlinear interaction) \eqref{eq:variational_problem_nonlinear}.
The method will be presented using macro-to-micro framework as used in some numerical homogenization procedures 
\cite{Abdulle2009, EEL2007, MieheBayreuther2007, GeersKouznetsovaBrekelmans2010, TeradaKikuchi2001}.

\subsubsection{Macroscopic affine deformation}
Let
\[
{\cal X}:=\{X_i=i\eps,~ i=1,\ldots N\},
\quad N\in\mathbb N,
\]
be the reference lattice for the problem \eqref{eq:variational_problem_nonlinear}.
In the set of indices $1\leq i\leq N$ we choose $K$ values ($K<N$) $i_1<\ldots<i_K$ and compose a macroscopic lattice
\[
{\cal X}_H:=\{X_{i_k};~ i=1,\ldots K\},
\]
defining the macroscopic partition of the interval
\[
{\cal T}=\{S_k=[X_{i_k},X_{i_{k+1}});~k=1,\ldots K\}
.
\]
Here we fix $i_1=1$ for convenience (we can do so without loss of generality due to translation invariance) and define $i_{K+1}=N+1$ in accordance with the periodic extension, define $H_k=\eps(i_{k+1}-i_k)$ (for $k=1,\ldots,K$) the length of the $S_k$, and $H=\max_k H_k$.
We define the space of piecewise affine (discrete) deformations by
\begin{align*}
U_\per^H=\Big\{
~&
	u^H\in U_\per^{N}(\eps\mathbb Z):
\\ ~&
u^H(X_i)|_{S_k}=\frac{X_{i_{k+1}}-X_i}{X_{{i_k+1}}-X_{i_k}}u^H(X_{i_k})+\frac{X_i-X_{i_k}}{X_{i_{k+1}}-X}u^H(X_{i_{k+1}}),~k=1\ldots,K\Big\},
\end{align*}
and
\[
U_{\#}^H
=
\left\{u^H\in U_\per^H:~\left<u^H\right>_X=0\right\}.
\]

\subsubsection{Sampling Domains}
Inside each macroscopic interval $[X_{i_k},X_{i_{k+1}})$ we choose a representative position $X_{i_k}^{\rm rep}$ and a sampling domain
\[
S_{k}^{\rm rep}= \left\{X_i:X_{i_k}^{\rm rep}\le X_i<X_{i_k}^{\rm rep}+p\eps \right\},~\quad
{\mathcal I}^{\rm rep}_k= \left\{i\in\mathbb N:X_{i_k}^{\rm rep}\le X_i<X_{i_k}^{\rm rep}+p\eps \right\},
\]
and define the operator of averaging over the sampling domain
\[
\left<w\right>_{X_i\in S_{k}^{\rm rep}} = \frac{1}{p} \sum\limits_{X_i\in S_{k}^{\rm rep}} w(X_i).
\]

The sampling domain should be chosen closer to the center of the interval $\frac{X_{i_k} + X_{i_{k+1}}}{2}$ if the material's properties vary within the interval (more precisely, if the interaction potentials for different groups of $p$ adjacent atoms are different), as we will see in Theorem \ref{thm:modeling_error}.
More sampling domains per macro interval may be considered for higher-order macro element space $U^H$.

\subsubsection{Energy and Macro Nonlinear Form}

Define the energy of the HQC method
\[
E^{\rm HQC}(u^H)
=
\sum\limits_{S_k\in{\mathcal T}} H_k \sum_{r=1}^{R}
\left<\Phi^\epsilon_r(D_r \calR_k(u^H))\right>_{X_i\in S_{k}^{\rm rep}},
\]
where
$\calR_k\left(u^H\right)$, defined by \eqref{eq:HQC:microproblem}, is the microfunction constrained by $u^H$ in the sampling domain $S_{k}^{\rm rep}$,
and $\Phi^\epsilon_{r}(z)(X_i) = \varphi^\epsilon_r(r + r z(X_i))$ is the energy of interaction of atoms $i$ and $i+r$ (i.e., $\varphi_{i,i+r}(z)$ in the notations of Section \ref{sec:problem_formulation}, cf.\ \eqref{eq:Phi}).

The functional derivative of the above energy reads
\begin{equation}
(E^{\rm HQC})'(u^H; v^H)
=
\sum\limits_{S_k\in{\mathcal T}} H_k \sum_{r=1}^{R}
\left<(\Phi^\epsilon_r)'(D_r \calR_k(u^H)), D_r \calR_k'(u^H; v^H)\right>_{X_i\in S_{k}^{\rm rep}},
\label{eq:HQC:bilinear-form}
\end{equation}
where $\calR_k'(u^H; v^H)$ is the derivative of the reconstruction $\calR_k(u^H)$,
and $(\Phi^\epsilon_r)'(z) = r \frac{\partial}{\partial z}\varphi^\epsilon_r(r + r z)$ is defined in accordance with \eqref{eq:Psi}.

\subsubsection{Microproblem}\label{HQC:nonlinear:cell}
Given a function $u^H\in U_\per^H,$ $\calR_k\left(u^H\right)$
is a function defined on $S_{k}^{\rm rep}$
such that $\calR_k\left(u^H\right)-u^H\in U_{\#}^{p}(\eps\mathbb Z)$
and
\begin{equation}
\sum_{r=1}^{R}
\left<(\Phi^\epsilon_r)'\left(D_{r} \calR_{k}\left(u^H\right)\right),~ D_{r}s\right>_{X_i\in S_{k}^{\rm rep}}
=0
\quad \forall {s}\in U_{\#}^{p}(\eps\mathbb Z)
.
\label{eq:HQC:microproblem}
\end{equation}
\begin{remark}\label{rem:nonlinear_reconstruction_stable_equilibrium}
	When modeling essentially nonlinear phenomena (e.g., martensite-austenite phase transformation), one should require that the microstructure corresponds to a stable equilibrium.
	That is, one should require, in addition to \eqref{eq:HQC:microproblem}, that $w=\calR_{k}\left(u^H\right)-u^H\in U_{\#}^{p}(\eps\mathbb Z)$ is a local minimum of $\sum_{r=1}^{R} \left<\Phi^\epsilon_{r}\left(D_{r} (u^H+w)\right)\right>_{X_i\in S_{k}^{\rm rep}}$ \cite[p.\ 238]{TadmorSmithBernsteinEtAl1999}
\end{remark}

\begin{remark}\label{rem:linear_reconstruction}
In the case of linear interaction, the reconstruction $\calR_k$ is a linear function and hence $\calR_{k}'\left(u^H; {v}^H\right) = \calR_k(v^H)$, which makes the derivative of the HQC energy \eqref{eq:HQC:bilinear-form} take the form
\begin{equation}
(E^{\rm HQC})'(u^H; v^H)
=
\sum\limits_{S_k\in{\mathcal T}} H_k \sum_{r=1}^{R}
\left<(\Phi^\epsilon_r)'(D_r \calR_k(u^H)), D_r \calR_k(v^H)\right>_{X_i\in S_{k}^{\rm rep}}.
\label{eq:HQC:bilinear-form-linear}
\end{equation}
\end{remark}

\begin{remark}\label{rem:simplify-the-bilinear-form}
The functional derivative of the HQC energy \eqref{eq:HQC:bilinear-form} can equivalently be written as
\begin{equation}
(E^{\rm HQC})'(u^H; v^H)
=
\sum\limits_{S_k\in{\mathcal T}} H_k \sum_{r=1}^{R}
\left<(\Phi^\epsilon_r)'(D_r \calR_k(u^H)), D_r v^H\right>_{X_i\in S_{k}^{\rm rep}},
\label{eq:HQC:bilinear-form_2}
\end{equation}
by noting that $D_{r} \calR_{k}'\left(u^H; {v}^H\right)=D_{r} {v}^H+\left(D_{r} \calR_{k}'\left(u^H; {v}^H\right)-D_{r} {v}^H\right)$
and that
\[
\sum_{r=1}^{R}\left<(\Phi^\epsilon_r)'\left(D_{r} \calR_{k}\left(u^H\right)\right),~ \left(D_{r} \calR_{k}'\left(u^H; {v}^H\right)-D_{r} {v}^H\right)\right>_{X_i\in S_{k}^{\rm rep}}
=0,
\]
in view of \eqref{eq:HQC:microproblem}.
Here we used the fact that $\calR_{k}'(u^H; v^H)-v^H \in U_{\#}^{p}(\eps\mathbb Z)$ which follows from taking the derivative of $\calR_{k}(u^H)-u^H \in U_{\#}^{p}(\eps\mathbb Z)$.
\end{remark}

\subsubsection{Reconstruction}\label{HQC:nonlinear:reconstruction}
The function $\calR_k\left(u^H\right)$ is defined on $S_{k}^{\rm rep}$.
We define an extension of $\calR_k\left(u^H\right)$
on $S_{k}$ by periodic extension outside $S_k^{\rm rep}$:
\begin{equation}
u^{H,c}(X_i)= u^H + \calR_k(u^H)(\overline{X}_i),
\label{eq:HQC:periodic-extension}
\end{equation}
where $\overline{X}_i = X_{i_k}^{\rm rep}+ \epsilon \Big(\frac{X_i-X_{i_k}^{\rm rep}}{\eps} {\rm~~mod~} p\Big)$, and $(\bullet {\rm~~mod~} p)$ is an integer value modulo $p$.
By extending the function $\calR_k\left(u^H\right)$ in each $S_k\in\cal T$, one obtains a function defined on $\cal X$ which we denote by $u^{H,c}$.

\subsubsection{Variational Problem}
We define the homogenized quasicontinuum approximation as the solution $u^H\in U_{\#}^H$ of
\begin{equation}
(E^{\rm HQC})'(u^H; v^H)
=
F\left({v}^H\right),\quad\forall v^H\in U_{\#}^H
\label{eq:HQC:problem}
\end{equation}
where
\begin{equation}
F\left({v}^H\right) =
\sum\limits_{S_k\in{\mathcal T}} H_k \left<f^{\epsilon}, {v}^H\right>_{X_i\in S_{k}^{\rm rep}}
.
\label{eq:HQC:RHS}
\end{equation}
If the external force is not oscillating, it could instead be computed for a single representative atom.
Well-posedness of \eqref{eq:HQC:problem} will be discussed in Section \ref{sec:HQC-convergence} for the nearest neighbor linear interaction.

\subsection{HQC Algorithm}\label{sec:HQC:algorithm}
The problem \eqref{eq:HQC:problem} is nonlinear and its practical implementation is usually done by the Newton's method.
We briefly sketch below an algorithm for solving \eqref{eq:HQC:problem}.

\subsubsection{Second Derivative of the Energy}

For the Newton's method we need to compute the second derivative of the energy (from \eqref{eq:HQC:bilinear-form_2}):
\[
\begin{array}{l} \displaystyle
(E^{\rm HQC})''(u^{H}; w^H, v^H)
\\ \displaystyle \qquad
=
\sum\limits_{S_k\in{\mathcal T}} H_k \sum_{r=1}^{R}
\left<(\Phi_r^{\epsilon})''(D_r \calR_k(u^H)) \,D_r \calR_k'(u^H; w^H),\, D_r v^H\right>_{X_i\in S_{k}^{\rm rep}},
\end{array}
\]
which, applying the similar arguments as in Remark \ref{rem:simplify-the-bilinear-form}, can be written in a symmetric form
\begin{equation}
\begin{array}{l} \displaystyle
(E^{\rm HQC})''(u^{H}; w^H, v^H)
\\ \displaystyle \qquad
=
\sum\limits_{S_k\in{\mathcal T}} H_k \sum_{r=1}^{R}
\left<(\Phi_r^{\epsilon})''(D_r \calR_k(u^H)) \,D_r \calR_k'(u^H; w^H),\, D_r \calR_k'(u^H; v^H)\right>_{X_i\in S_{k}^{\rm rep}}.
\end{array}
\label{eq:HQC:second-variation}
\end{equation}

\subsubsection{Newton's Iterations for the Macroproblem}

The algorithm based on the Newton's method consist in choosing the initial guess $u^{(0),H}\in U^H_{\#}$ and performing iterations
\begin{equation}
	(E^{\rm HQC})''(u^{(n),H};\, u^{(n+1),H} - u^{(n),H}, v^H) = F(v^H)
	\quad \forall v^H\in U_{\#}^H,
\label{eq:HQC:Newton-macro-iterations}
\end{equation}
until $u^{(n+1),H}$ becomes close to $u^{(n),H}$ in a chosen norm.

To solve the linear system \eqref{eq:HQC:Newton-macro-iterations} for $u^{(n+1),H} - u^{(n),H}\in U_{\#}^H$, we choose a nodal basis $w_k^H$ ($1\le k\le K$) of $U_\per^H$.
One way to satisfy the condition $\left<u^H\right>_X=0$ would be to perform all the computations with one basis function eliminated (e.g., to consider $w_k^H$ for $2\le k\le K$), and post-process the final solution as $u^H - \left<u^H\right>_X$.

The stiffness matrix of the system \eqref{eq:HQC:Newton-macro-iterations} will thus be
\[
A_{lm}
=
(E^{\rm HQC})''(u^{(n),H};\, w_l^{H}, w_m^{H})
\]
and the load vector will be
\[
b_m =F(w_m^{H}).
\]
As given by the formula \eqref{eq:HQC:second-variation} we need to compute the solution of microproblem $\calR_k(u^{(n),H})$ on each sampling domain $S_k^{\rm rep}$ as well as its derivative $\calR_k'(u^{(n),H}; w^H_l)$.

\subsubsection{Solution of the Microproblem}

The microproblem \eqref{eq:HQC:microproblem} can also be solved with Newton's method.
For that, one needs to choose an initial guess $u^{(n,0)}$ to $\calR_k(u^{(n),H})$, for instance $u^{(n,0)}(X_i) := u^{(n),H}(X_i)$ and solve
\[
\begin{array}{r} \displaystyle
\sum_{r=1}^{R}
D_{r} \left((\Phi_{r}^{\epsilon})'\left(D_{r} u^{(n,\nu)}\right) + (\Phi_{r}^{\epsilon})''\left(D_{r} u^{(n,\nu)}\right)
	D_{r} \left(u^{(n,\nu+1)}-u^{(n,\nu)}\right)\right)(X_i) = 0 \qquad
\\ \displaystyle
\forall {X_i\in S_{k}^{\rm rep}}
,
\end{array}
\]
with respect to $u^{(n,\nu+1)}$ constrained by $u^{(n,\nu+1)} - u^{(n),H}\in U_{\#}^{p}(\eps\mathbb Z)$, until the difference between $u^{(n,\nu+1)}$ and $u^{(n,\nu)}$ is small in a chosen norm.

After that, we can compute $w_{k,l} = \calR_k'(u^{(n),H}; w^H_l)$ by solving
\begin{equation}
\sum_{r=1}^{R}
D_{r} \left((\Phi_{r}^{\epsilon})''\left(D_{r} u^{(n,\nu)}\right) D_{r} w_{k,l}\right)(X_i) = 0,
\quad \forall {X_i\in S_{k}^{\rm rep}}
\label{eq:HQC:equation-for-reconstruction-variation}
\end{equation}
constrained by $w_{k,l} - w^H_l\in U_{\#}^{p}(\eps\mathbb Z)$.
Notice that the derivative of the basis functions $D_{r} w^H_l$ on the interval $S_k$ can either be $0$ (in which case $w_{k,l}$ equals zero identically), or $\pm \frac{1}{H_k}$.
It implies that we essentially need to solve the problem \eqref{eq:HQC:equation-for-reconstruction-variation} limited number of times (once in the 1D case, or between $d$ and $d+1$ in $\bbR^d$, depending on implementation).

Also observe that when computing $\calR_k'(u^{(n),H}; w^H_l)$, we need to invert the same linear operator
$
\sum\limits_{r=1}^{R}
D_{r} \left((\Phi_{r}^{\epsilon})''\left(D_{r} u^{(n,\nu)}\right) D_{r}\,\bullet\right)
$
as in the final Newton's iteration, which allows for some additional optimization.

\subsection{Possible Modifications of the Algorithm}\label{sec:HQC:algorithm:modifications}

First, notice that when solving for $u^{(n+1),H}$ we could linearize the problem on the previous iteration $u^{(n),H}$.
In that case we would have only the linear cell problems and thus we would need only outer Newton's iteration, but it would be required to keep the values of the micro-solution $\calR_k(u^{(n),H})$ from the previous iteration.
Moreover, even in a practical implementation of the above algorithm it may be required to keep the values of the micro-solution: one needs these values to initialize the inner Newton iterations \cite{TadmorSmithBernsteinEtAl1999}.

Another modification could be to compute the contribution of the external force $f^{\epsilon}$ in \eqref{eq:HQC:RHS} for a single atom in the case of no oscillations in $f^{\epsilon}$.

In the case of linear interaction, the algorithm becomes simpler: one does not need to do Newton iterations.
Nevertheless, even if the algorithm in subsection \ref{sec:HQC:algorithm} is applied to the linear problem, the Newton's method would converge in just one iteration.

\section{Convergence of HQC} \label{sec:HQC-convergence}

In this section we study convergence of the HQC method introduced in \eqref{eq:HQC:problem}.
We analyze the method for linear problems and nearest neighbor interaction
\eqref{eq:variational_problem_linear-nn}.
We treat the external force $f(X_i)$ in an exact manner.
We furthermore make a slight modification to the HQC method: we assume \eqref{eq:psi-multiscale-assumption} and collocate the tensor $\psi$ in \eqref{eq:HQC:bilinear-form-linear} and \eqref{eq:HQC:microproblem} in the slow variable at $X_{i^{\rm coll}_k}$ in each sampling domain $S^{\rm rep}_k$.
That is, we solve
\begin{equation}
\label{eq:HQC:problem_analyzed}
(E^{\rm HQC})'(u^H; v^H)
:=
\sum\limits_{S_k\in{\mathcal T}} H_k
\left<\psi^{\epsilon}_{\rm coll} D_X \calR_k(u^H), D_X \calR_k(v^H)\right>_{X_i\in S_{k}^{\rm rep}}
~= \left<f, v^H\right>_X
,
\end{equation}
where
\begin{equation}
\label{equ:hom_tens_collocate}
\psi^\eps_{\rm coll}(X_i) := \psi(X_{i^{\rm coll}_k}, X_i/\eps)
\quad\forall X_i\in S^{\rm rep}_k
,
\qquad\textnormal{and}
\end{equation}
\begin{equation}
\label{eq:hom_microproblem_collocate}
\left<\psi^\eps_{\rm coll} D_X \calR_{k}\left(u^H\right),~ D_X s\right>_{X_i\in S_{k}^{\rm rep}}
=0
\quad \forall {s}\in U_{\#}^{p}(\eps\mathbb Z)\
.
\end{equation}
For an illustration of a collocation point $X_{i^{\rm coll}_k}$ and a sampling domain $S_{k}^{\rm rep}$ refer to Figure \ref{fig:rep-coll-illustration}.

\begin{figure}
\begin{center}
	\includegraphics{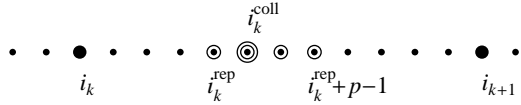}
\end{center}
\caption{Illustration of an element $S_k = [X_{i_k}, X_{i_{k+1}})$ ($i_k$ and $i_{k+1}$ are indices of nodal atoms), sampling domain with atom indices $i_k^{\rm rep}$ through $i_k^{\rm rep}+p-1$ (circled atoms) and a collocation atom $i_k^{\rm coll}$ (double-circled).}
\label{fig:rep-coll-illustration}
\end{figure}

In order to obtain the second order convergence in the $L^2$-norm, we will assume two additional conditions:
\begin{equation}
\left|X_{i^{\rm coll}_k} - \frac{X_{i_k}+X_{i_{k+1}-1}}{2}\right| \le \epsilon C_{\rm coll}
\quad
\forall S_k\in{\mathcal T}
,\qquad \textnormal{and}
\label{eq:additional_condition_1}
\end{equation}
\begin{equation}
\|D^2 \psi^0\|_{L^\infty} \le C_{\psi^0}''.
\label{eq:additional_condition_2}
\end{equation}
The condition \eqref{eq:additional_condition_1} states that the collocation point is at most $O(\epsilon)$ away from the center of each $S_k$.

In this section, the constants $C_4$, $C_5$, etc., denote generic constants which may depend on $c_{\psi}$, $C_{\psi}$, $C_{\psi}'$, $C_{\rm coll}$, $C_{\psi^0}''$, and $p$, but are independent of $\epsilon$ and $H$.

\subsection{Main Results}
\label{sub:HQC_analysis_prel}

Before proceeding with analysis (Section \ref{sub:HQC_analysis}), we summarize the main convergence results.
The results are formulated in terms of $e_{\rm mod}$, the so-called modeling error, which is defined in \eqref{eq:emod}.

\begin{theorem}
\label{thm:num_hom}
\label{lem:analysis-of-methods:homogenized-convergence}
Let $u^0,u^H$ be the solutions of problems \eqref{eq:linear-nn_homogenized} and \eqref{eq:HQC:problem_analyzed}, respectively, and assume that \eqref{equ:coerc_bound_psi} and \eqref{equ:bound_dpsi} hold.
Then there exist constants $C_4$ and $C_5$ such that
\begin{equation}
\left|u^H-u^0\right|_{H^1}\le C_4 H \|{f}\|_{L^2}
,
\qquad\textnormal{and}
\label{eq:analysis-of-methods:homogenized-convergence-H1}
\end{equation}
\begin{equation}
\left\|u^H-u^0\right\|_{L^2}\le C_{5} H^2 \|{f}\|_{L^2} + \|e_{\rm mod}\|_{L^2}
.
\label{eq:analysis-of-methods:homogenized-convergence-L2}
\end{equation}
\end{theorem}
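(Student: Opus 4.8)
The plan is to show that, for nearest neighbor linear interaction, the HQC scheme \eqref{eq:HQC:problem_analyzed} coincides with a conforming $P_1$ finite element method for the homogenized equation \eqref{eq:linear-nn_homogenized} in which $\psi^0$ is sampled at the collocation points, and then to apply the classical error analysis for finite elements with numerical quadrature. For the reduction, fix an element $S_k$: since every $u^H\in U_\per^H$ is affine on $S_k$, the gradient $D_X u^H|_{S_k}=:G^u_k$ is constant there. Writing $\calR_k(u^H)=u^H+w^u$ with $w^u\in U_{\#}^{p}(\eps\mathbb Z)$, the collocated microproblem \eqref{eq:hom_microproblem_collocate} becomes, after setting $Y_i=X_i/\eps$, precisely the discrete cell problem \eqref{eq:linear-nn_chi} for the frozen tensor $\psi(X_{i^{\rm coll}_k},\bullet)$ scaled by $G^u_k$, so that $w^u=\eps\,G^u_k\,\chi(X_{i^{\rm coll}_k},\bullet)$ and $D_X w^u=G^u_k D_Y\chi$. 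Testing \eqref{eq:hom_microproblem_collocate} with $s=w^v$ and using that $S_{k}^{\rm rep}$ consists of exactly one period gives
\[
\left<\psi^{\eps}_{\rm coll}D_X\calR_k(u^H),\,D_X\calR_k(v^H)\right>_{X_i\in S_{k}^{\rm rep}}
=
G^u_k G^v_k\left<\psi(1+D_Y\chi)\right>_Y
=
\psi^0(X_{i^{\rm coll}_k})\,G^u_k G^v_k
\]
by \eqref{eq:linear-nn_homogenized-tensor}. Hence \eqref{eq:HQC:problem_analyzed} reads $\sum_{S_k}H_k\,\psi^0(X_{i^{\rm coll}_k})\,D_X u^H D_X v^H=\left<f,v^H\right>_X$, which is the $P_1$ discretization of \eqref{eq:linear-nn_homogenized} with $\psi^0$ evaluated by one-point (collocation) quadrature on each $S_k$, and coercivity with constant $c_\psi$ follows from \eqref{equ:coerc_bound_psi0}, so $u^H$ exists and is unique.

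\emph{The $H^1$ estimate.} Estimate \eqref{eq:analysis-of-methods:homogenized-convergence-H1} now follows from Strang's first lemma. The best-approximation contribution is controlled by the standard interpolation estimate $\inf_{v^H\in U_{\#}^H}|u^0-v^H|_{H^1}\le C H |u^0|_{H^2}$, while the consistency (quadrature) error is
\[
\left|\sum_{S_k}H_k\big(\left<\psi^0\right>_{S_k}-\psi^0(X_{i^{\rm coll}_k})\big)D_X v^H D_X w^H\right|
\le C H\,|\psi^0|_{W^{1,\infty}}\,|v^H|_{H^1}|w^H|_{H^1},
\]
since the oscillation of $\psi^0$ over $S_k$ is $O(H)$ by the Lipschitz bound \eqref{eq:psi0_W1infty} of Lemma \ref{lem:cell-problem}. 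Inserting the regularity estimate $|u^0|_{H^2}=\|D^2 u^0\|_{L^2}\le C_0\|f\|_{L^2}$ already obtained within the proof of Theorem \ref{thm:apriory-H1} yields \eqref{eq:analysis-of-methods:homogenized-convergence-H1}.

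\emph{The $L^2$ estimate.} For \eqref{eq:analysis-of-methods:homogenized-convergence-L2} I would use an Aubin--Nitsche duality argument. Let $z\in U_{\#}^{N}(\eps\mathbb Z)$ solve the homogenized dual problem $-D_X(\psi^0 D_X z)=u^H-u^0$; by Proposition \ref{prop:original-problem} together with the $H^2$ regularity, $|z|_{H^2}\le C\|u^H-u^0\|_{L^2}$. Pairing with $u^H-u^0$ and inserting a finite element interpolant $z^H$ of $z$, one factor of $H$ comes from the already-proven $H^1$ estimate and a second from $|z-z^H|_{H^1}\le C H|z|_{H^2}$; the remaining term is the consistency error $\sum_{S_k}H_k\big(\left<\psi^0\right>_{S_k}-\psi^0(X_{i^{\rm coll}_k})\big)D_X u^H D_X z^H$. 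The key point is that, because the collocation point lies within $O(\eps)$ of the midpoint of $S_k$ by \eqref{eq:additional_condition_1}, the first-order part of $\left<\psi^0\right>_{S_k}-\psi^0(X_{i^{\rm coll}_k})$ cancels by the symmetry of the midpoint rule, leaving an $O(H^2)$ remainder controlled by the bound \eqref{eq:additional_condition_2} on $D^2\psi^0$; this produces the term $C_5 H^2\|f\|_{L^2}$. The residual part, stemming from the $O(\eps)$ offset of the collocation point and from freezing $\psi$ in the slow variable at $X_{i^{\rm coll}_k}$, does not decrease under mesh refinement and is exactly the modeling error $\|e_{\rm mod}\|_{L^2}$ defined in \eqref{eq:emod}.

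\emph{Main obstacle.} The principal difficulty lies in two places. First, the reduction identity must be established carefully: it is exact only because the sampling domain is one full period $p$, so that the sampling average coincides with $\left<\bullet\right>_Y$ and the collocated microproblem reproduces $\psi^0(X_{i^{\rm coll}_k})$ without error. Second, and more delicate, is extracting the second-order ($H^2$) super-convergence in the $L^2$ estimate: one must show that the leading $O(H)$ part of the tensor-collocation error integrates to zero against the piecewise-constant gradients by the midpoint symmetry \eqref{eq:additional_condition_1}, so that only the $O(H^2)$ term survives and the genuinely non-decaying discrepancy is isolated into $e_{\rm mod}$.
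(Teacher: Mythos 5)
Your reduction of \eqref{eq:HQC:problem_analyzed} to a $P_1$ discretization of \eqref{eq:linear-nn_homogenized} with the collocated tensor $\psi^0_{\rm coll}(X_i)=\psi^0(X_{i^{\rm coll}_k})$ is correct and is exactly the paper's Lemma \ref{lem:equv_hmm_QC}, and your $H^1$ argument via Strang's first lemma (best approximation plus an $O(H)$ consistency term coming from the Lipschitz bound \eqref{eq:psi0_W1infty}) is in substance the paper's proof, which organizes the same two contributions as a C\'ea/interpolation estimate for the auxiliary solution $\tildeu^H$ of \eqref{eq:QC-for-homogenized} plus the $H^1$ modeling-error bound of Theorem \ref{thm:modeling_error}(a).

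The $L^2$ part, however, has a genuine gap. You run Aubin--Nitsche directly on $u^H-u^0$, which forces the collocation (quadrature) discrepancy $\sum_{S_k}H_k\bigl(\left<\psi^0\right>_{X_i\in S_k}-\psi^0(X_{i^{\rm coll}_k})\bigr)D_X u^H D_X z^H$ into the duality identity, and to make it $O(H^2)$ you invoke the midpoint condition \eqref{eq:additional_condition_1} and the second-derivative bound \eqref{eq:additional_condition_2}. But Theorem \ref{thm:num_hom} assumes only \eqref{equ:coerc_bound_psi} and \eqref{equ:bound_dpsi}; the extra conditions belong to Theorem \ref{thm:modeling_error}(b), and the whole point of the statement \eqref{eq:analysis-of-methods:homogenized-convergence-L2} is that the collocation discrepancy is \emph{not} estimated here but carried along abstractly as $\|e_{\rm mod}\|_{L^2}$. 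Moreover, your closing claim that the leftover residual ``is exactly the modeling error'' is not an identity and is not justified: $e_{\rm mod}=u^H-\tildeu^H$ is a specific function defined in \eqref{eq:emod}, while your residual is a bilinear-form discrepancy; if you try to convert it (note $\bigl(a_{\rm coll}-a\bigr)(u^H,z^H)=-\left<\psi^0 D\,e_{\rm mod},D z^H\right>_X$), you can only bound it by $C_\psi|e_{\rm mod}|_{H^1}|z^H|_{H^1}$, which produces the $H^1$ seminorm of $e_{\rm mod}$ --- an $O(H)$ quantity under the stated hypotheses --- rather than $\|e_{\rm mod}\|_{L^2}$, so the bound as stated does not follow. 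The repair is the paper's route: introduce $\tildeu^H$ solving \eqref{eq:QC-for-homogenized} with the exact tensor $\psi^0$, apply the duality argument to $u^0-\tildeu^H$ (a pure Galerkin error, with no consistency term at all), obtaining $\|u^0-\tildeu^H\|_{L^2}\le C_\psi C_4^2 H^2\|f\|_{L^2}$, and then conclude by the triangle inequality $\|u^0-u^H\|_{L^2}\le\|u^0-\tildeu^H\|_{L^2}+\|e_{\rm mod}\|_{L^2}$, which yields \eqref{eq:analysis-of-methods:homogenized-convergence-L2} verbatim and without any appeal to \eqref{eq:additional_condition_1} or \eqref{eq:additional_condition_2}.
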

\begin{theorem}
\label{thm:analysis-of-methods:homogenized-L2-convergence}
Under the assumptions of Theorem \ref{thm:num_hom}, let $u$ be the solution of the original problem \eqref{eq:variational_problem_linear-nn}.
Then there exist constants $C_6$ and $C_7$ such that
\[
\left\|u^H-u\right\|_{L^2}\le \left(C_6 H^2 + C_7 \epsilon\right) \|{f}\|_{L^2} + \|e_{\rm mod}\|_{L^2}.
\]
\end{theorem}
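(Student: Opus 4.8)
The plan is to obtain the bound by a single triangle inequality, inserting the homogenized solution $u^0$ as an intermediary between the HQC solution $u^H$ and the exact atomistic solution $u$. All the substantive work has already been carried out in the two preceding theorems, so the proof reduces to bookkeeping of the constants. First I would write
\[
\left\|u^H - u\right\|_{L^2} \le \left\|u^H - u^0\right\|_{L^2} + \left\|u^0 - u\right\|_{L^2}.
\]
The two terms on the right are precisely the quantities estimated earlier. The hypotheses of the present theorem (namely $\left<f\right>_X = 0$, $N/p \in \mathbb N$, and the bounds \eqref{equ:coerc_bound_psi} and \eqref{equ:bound_dpsi}) are inherited through the assumptions of Theorem \ref{thm:num_hom}, so both estimates apply to the same data simultaneously.

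Next I would invoke Theorem \ref{thm:num_hom}, specifically the $L^2$ estimate \eqref{eq:analysis-of-methods:homogenized-convergence-L2}, to control the discretization (HQC) error,
\[
\left\|u^H - u^0\right\|_{L^2} \le C_5 H^2 \|f\|_{L^2} + \|e_{\rm mod}\|_{L^2},
\]
and Theorem \ref{thm:apriory-L2} to control the homogenization error,
\[
\left\|u^0 - u\right\|_{L^2} \le C_3 \epsilon \|f\|_{L^2}.
\]
Adding these two bounds and setting $C_6 = C_5$ and $C_7 = C_3$ then yields
\[
\left\|u^H - u\right\|_{L^2} \le \left(C_6 H^2 + C_7 \epsilon\right)\|f\|_{L^2} + \|e_{\rm mod}\|_{L^2},
\]
which is exactly the claimed estimate.

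Since both ingredients are already established, there is no genuine analytical obstacle here; the entire difficulty has been front-loaded into the proofs of Theorems \ref{thm:num_hom} and \ref{thm:apriory-L2}. The only point requiring care is that $u^0$ is the correct common reference configuration for both estimates: it solves \eqref{eq:linear-nn_homogenized} with the same homogenized tensor $\psi^0$ that underlies the HQC analysis of Section \ref{sec:HQC-convergence}. This guarantees that the homogenization error and the discretization error can be superposed additively and that the modeling-error contribution $\|e_{\rm mod}\|_{L^2}$ carries through unchanged, so that the $O(H^2)$ and $O(\epsilon)$ terms accumulate as stated.
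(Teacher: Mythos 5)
Your proposal is correct and is exactly the paper's argument: the paper's proof states only that the result ``follows immediately from Theorems \ref{thm:apriory-L2} and \ref{thm:num_hom},'' and your triangle inequality through $u^0$ with $C_6 = C_5$ and $C_7 = C_3$ is precisely the spelled-out version of that one-liner.
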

\begin{theorem}
\label{thm:analysis-of-methods:reconstructed-convergence}
Under the assumptions of Theorem \ref{thm:num_hom}, let $u$ be the solution of the original problem \eqref{eq:variational_problem_linear-nn}
and $u^{H,c}$ be the reconstruction \eqref{eq:HQC:periodic-extension} of the solution $u^H$ of \eqref{eq:HQC:problem_analyzed}, where the reconstruction operator $\calR_k(u^H)$ is defined by \eqref{eq:hom_microproblem_collocate}.
Then there exist constants $C_8$ and $C_9$ such that
\begin{equation}
\left|u^{H,c}-u\right|_{H^1}\le C_8 H \|{f}\|_{L^2}
,
\qquad\textnormal{and}
\label{eq:analysis-of-methods:reconstructed-convergence-H1}
\end{equation}
\begin{equation}
\left\|u^{H,c}-u\right\|_{L^2}\le C_9 H^2 \|{f}\|_{L^2} + \|e_{\rm mod}\|_{L^2}
.
\label{eq:analysis-of-methods:reconstructed-convergence-L2}
\end{equation}
\end{theorem}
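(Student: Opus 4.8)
The plan is to measure the reconstruction error against the true atomistic solution $u$ by inserting the continuum corrector $u^{\rm c}$ of \eqref{equ:corrector} and the homogenized solution $u^0$ as intermediaries, thereby reducing the error to quantities already controlled in Section \ref{sec:estimates} together with the HQC macro error of Theorem \ref{thm:num_hom}. The starting point is that, in the linear collocated setting \eqref{eq:hom_microproblem_collocate}, the microproblem can be solved explicitly: since $u^H$ is affine on each $S_k$ with constant slope $D_X u^H$, and since $\chi$ is the cell corrector for the collocated tensor, one checks using the cell problem that $\calR_k(u^H)-u^H = \eps\,\chi(X_{i^{\rm coll}_k},X_i/\eps)\,D_X u^H$ on $S_k^{\rm rep}$. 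After the periodic extension \eqref{eq:HQC:periodic-extension} this gives the closed form $u^{H,c}(X_i)=u^H(X_i)+\eps\,\chi(X_{i^{\rm coll}_k},X_i/\eps)\,D_X u^H(X_i)$, which differs from $u^{\rm c}(X_i)=u^0(X_i)+\eps\,\chi(X_i,X_i/\eps)\,D_X u^0(X_i)$ only through the macro discrepancy $u^H-u^0$ and the collocation of $\chi$ in its slow argument.

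For the $H^1$ estimate \eqref{eq:analysis-of-methods:reconstructed-convergence-H1} I would use $|u^{H,c}-u|_{H^1}\le |u^{H,c}-u^{\rm c}|_{H^1}+|u^{\rm c}-u|_{H^1}$. The second term is $\le C_1\eps\|f\|_{L^2}\le C_1 H\|f\|_{L^2}$ by Theorem \ref{thm:apriory-H1} and $\eps\le H_k\le H$. For the first term I would differentiate the closed form: the $\eps^{-1}D_Y$ part of the total derivative cancels the prefactor $\eps$ and produces the leading contributions $(1+D_Y\chi)(D_X u^H-D_X u^0)$ and $\big(D_Y\chi(X_{i^{\rm coll}_k},\cdot)-D_Y\chi(X_i,\cdot)\big)D_X u^0$, while the remaining $D_X$ pieces retain a factor $\eps$. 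Using the uniform bounds on $\|\chi\|_{L^\infty}$, $\|D_X\chi\|_{L^\infty}$ and $|D_X\psi^0|$ of Lemma \ref{lem:cell-problem} (and the explicit formula \eqref{eq:linear-nn_exact_DYchi} to bound $\|D_X D_Y\chi\|_{L^\infty}$), the first contribution is controlled by $|u^H-u^0|_{H^1}=O(H)\|f\|_{L^2}$ via Theorem \ref{thm:num_hom}, and the second by $|X_{i^{\rm coll}_k}-X_i|\,\|D_X D_Y\chi\|_{L^\infty}\|D_X u^0\|_{L^2}=O(H)\|f\|_{L^2}$, since $|X_{i^{\rm coll}_k}-X_i|\le H$ on $S_k$. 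Collecting these (all $O(H)$, again using $\eps\le H$ for the remainder) yields \eqref{eq:analysis-of-methods:reconstructed-convergence-H1}.

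For the $L^2$ estimate \eqref{eq:analysis-of-methods:reconstructed-convergence-L2} I would use the decomposition
\[
u^{H,c}-u = \underbrace{\big[(u^{H,c}-u^H)-(u^{\rm c}-u^0)\big]}_{T_1} + \underbrace{(u^H-u^0)}_{T_2} + \underbrace{(u^{\rm c}-u)}_{T_3}.
\]
Here $T_1=\eps[\chi(X_{i^{\rm coll}_k},\cdot)D_X u^H-\chi(X_i,\cdot)D_X u^0]$ is estimated in $L^2$ by splitting off $\eps\chi(D_X u^H-D_X u^0)$, bounded by $\eps\|\chi\|_{L^\infty}|u^H-u^0|_{H^1}=O(\eps H)\|f\|_{L^2}$, and $\eps(\chi(X_{i^{\rm coll}_k},\cdot)-\chi(X_i,\cdot))D_X u^0$, bounded by $\eps H\|D_X\chi\|_{L^\infty}\|D_X u^0\|_{L^2}=O(\eps H)\|f\|_{L^2}$; since $\eps\le H$ both are $O(H^2)\|f\|_{L^2}$. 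The term $T_2$ is exactly bounded by $C_5H^2\|f\|_{L^2}+\|e_{\rm mod}\|_{L^2}$ in Theorem \ref{thm:num_hom}. The averages $\langle u^{H,c}\rangle_X$ and $\langle u^{\rm c}\rangle_X$, which are $O(\eps^2)\|f\|_{L^2}$ by the argument of Lemma \ref{lem:aposteriory} and Theorem \ref{thm:apriory-H1}, contribute only at order $\eps^2\le H^2$.

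The main obstacle is the term $T_3=u^{\rm c}-u$. The bound used in Theorem \ref{thm:apriory-L2}, $\|u^{\rm c}-u\|_{L^2}\le\tfrac{1}{2\sqrt3}|u^{\rm c}-u|_{H^1}=O(\eps)\|f\|_{L^2}$, is too weak: the reconstruction theorem carries no $O(\eps)$ term (contrast the macro estimate of Theorem \ref{thm:analysis-of-methods:homogenized-L2-convergence}), so the very purpose of reconstructing is to cancel this leading homogenization error. I must therefore establish the sharp estimate $\|u^{\rm c}-u\|_{L^2}=O(\eps^2)\|f\|_{L^2}$, which is absorbed into $O(H^2)\|f\|_{L^2}$ since $\eps^2\le H^2$. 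To obtain it I would run an Aubin--Nitsche duality argument on the residual identity from the proof of Lemma \ref{lem:aposteriory}, namely $-D_X(\psi^\eps D_X(u^{\rm c}-\langle u^{\rm c}\rangle_X-u))=-\eps D_X(\psi^\eps D_X T_Y(\chi D_X u^0))$: introducing the self-adjoint dual problem $-D_X(\psi^\eps D_X z)=u^{\rm c}-\langle u^{\rm c}\rangle_X-u$ and testing the residual against $z$ gives $\|u^{\rm c}-\langle u^{\rm c}\rangle_X-u\|_{L^2}^2=\eps\langle\psi^\eps D_X T_Y(\chi D_X u^0),D_X z\rangle_X$, and replacing $D_X z$ by its corrected gradient $(1+D_Y\chi)D_X z^0$ lets one invoke the first integral \eqref{eq:linear-nn_exact_first_integral}, $\psi(1+D_Y\chi)=\psi^0$, to reduce the oscillatory flux to the slow quantity $\psi^0 D_X T_Y(\chi D_X u^0)$, whose fast average vanishes because $\langle\chi\rangle_Y=0$; this supplies the extra factor $\eps$. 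This duality step, exploiting the oscillatory structure, is the technical heart of the proof, the remainder being the routine bookkeeping above together with Theorem \ref{thm:apriory-H1} for the $H^1$ part.
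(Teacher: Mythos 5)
Your bookkeeping is essentially identical to the paper's own proof: your closed form for the microproblem is Lemma \ref{lem:equv_hmm_QC_corr}, your collocation term $\eps(\chi_{\rm coll}-\chi)D_Xu$ is Lemma \ref{lem:analysis-of-methods:reconstructed-convergence-to-uc}, your splitting $(u^H-u^0)+\eps\chi D_X(u^H-u^0)$ is Lemma \ref{lem:analysis-of-methods:reconstructed-convergence-to-hat_uc}, and the $H^1$ bound is assembled exactly as in the paper. You have also correctly put your finger on the one step about which the paper's one-sentence proof is silent: after the triangle inequality one is left with $u^{\rm c}-u$, and the paper's own results (Theorem \ref{thm:apriory-H1} together with Corollary \ref{cor:discrete-poincare-for-Unsharp}) give only $\|u^{\rm c}-u\|_{L^2}\le C\eps\|f\|_{L^2}$, which cannot be absorbed into $C_9H^2\|f\|_{L^2}$ since only $\eps\le H$, not $\eps\le H^2$, is available. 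So the statement as written does require the sharp corrector estimate $\|u^{\rm c}-u\|_{L^2}=O(\eps^2)\|f\|_{L^2}$, and no such estimate exists anywhere in the paper; recognizing this is genuinely beyond what the paper does.

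However, your duality argument for that missing estimate has a genuine gap, and in fact it cannot be closed under the stated hypotheses. The decisive step --- ``the fast average of $\psi^0 D_XT_Y(\chi D_Xu^0)$ vanishes, which supplies the extra factor $\eps$'' --- requires a summation by parts in the fast variable, which moves a discrete slow derivative onto the non-oscillatory factors; one of these factors is $D_X^2u^0$, which under the hypotheses \eqref{equ:coerc_bound_psi}--\eqref{equ:bound_dpsi} and $f\in L^2$ has no bounded derivative and may itself resonate with $\chi$. Concretely, take $\psi=\psi(Y)$ with period $p=2$ alternating between $a\ne b$ (so $e_{\rm mod}=0$, $\psi^0$ is constant, and $\chi(Y_{j+1})=\tfrac{b-a}{2(a+b)}(-1)^j$), and take $f_{j+1}=(-1)^j\sin(2\pi X_j)$, adjusted by an $O(\eps)$ constant to have zero mean. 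Then $D_X^2u^0_j=-f_{j+1}/\psi^0$, so the residual of Lemma \ref{lem:aposteriory} becomes $T_Y\chi\,D_X^2u^0=-\tfrac{b-a}{2(a+b)\psi^0}\sin(2\pi X_j)$: a mean-free, slowly varying, $O(1)$ function with no fast oscillation left to exploit. Integrating $\psi^\eps D_X(u^{\rm c}-u)=\eps\,\psi^\eps T_Y\chi\,D_X^2u^0+{\rm const}$ then gives $\|u^{\rm c}-u\|_{L^2}=\Theta(\eps)\|f\|_{L^2}$, not $O(\eps^2)$; and since $\|u^{H,c}-u^{\rm c}\|_{L^2}=O(H^2+\eps H)$, choosing $H$ comparable to $\eps$ shows that the bound \eqref{eq:analysis-of-methods:reconstructed-convergence-L2} itself fails for such $f$. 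The gap is therefore real, though it is fair to say it is inherited from the paper rather than introduced by you: your plan (duality plus fast-variable summation by parts) does go through, and the theorem does hold, if one additionally assumes that $f$ is slowly varying (e.g., $|f|_{H^1}$ or $\|Df\|_{L^\infty}$ bounded independently of $\eps$) and that $\|D^2\psi^0\|_{L^\infty}\le C''_{\psi^0}$ as in \eqref{eq:additional_condition_2}; neither your write-up nor the paper makes these extra assumptions explicit.
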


The modeling error $e_{\rm mod}$, defined in \eqref{eq:emod}, reflects the fact that we introduce some error when neglecting the values of the tensor $\psi^0(X_i)$ everywhere outside the sampling domains $S_k^{\rm rep}$.
The modeling error is estimated in the following theorem.
\begin{theorem}\label{thm:modeling_error}
Under the assumptions of Theorem \ref{thm:num_hom},
\begin{itemize}
\item[(a)] There exists a constant $C_{10}$ such that
\begin{equation}
\|e_{\rm mod}\|_{L^2} \le \frac{1}{2 \sqrt{3}}\,|e_{\rm mod}|_{H^1} \le C_{10} H \|f\|_{H^{-1}}.
\label{eq:emod-H1-estimate}
\end{equation}
\item[(b)] If additionally \eqref{eq:additional_condition_1} and \eqref{eq:additional_condition_2} hold then there exist constants $C_{11}$ and $C_{12}$ such that
\[
\|e_{\rm mod}\|_{L^2} \le \frac{1}{2 \sqrt{3}}\,|e_{\rm mod}|_{H^1} \le (C_{11} H^2+C_{12} \epsilon) \|f\|_{H^{-1}}.
\]
\item[(c)] If the tensor $\psi(X_i, Y_i)$ in \eqref{equ:hom_tens_collocate} does not depend on $X_i$ then
$
e_{\rm mod} = 0.
$
\end{itemize}
\end{theorem}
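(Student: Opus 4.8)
The plan is to exhibit $e_{\rm mod}$ as the error between two piecewise-affine macro solutions that differ only in the homogenized coefficient used on each element, and then to reduce the entire statement to a pointwise bound on the per-element coefficient discrepancy. The starting observation is that, once the micro problem \eqref{eq:hom_microproblem_collocate} with the collocated tensor is solved on the sampling domain, its effective coefficient on $S_k$ is exactly the point value $\psi^0(X_{i^{\rm coll}_k})=\left<1/\psi(X_{i^{\rm coll}_k},\bullet)\right>_Y^{-1}$; hence the HQC form \eqref{eq:HQC:problem_analyzed} collapses on each element to $H_k\,\psi^0(X_{i^{\rm coll}_k})\,D_X u^H|_{S_k}\,D_X v^H|_{S_k}$, so that $u^H$ is the $U_{\#}^H$-Galerkin solution for the piecewise-constant coefficient $\psi^0(X_{i^{\rm coll}_k})$. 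Writing $\hat u^H\in U_{\#}^H$ for the companion piecewise-affine solution built with the exact element-averaged coefficient $\left<\psi^0\right>_{S_k}$ (the solution whose distance to $u^0$ is the standard $O(H^2)$ finite element error quantified by $C_5H^2$ in Theorem \ref{thm:num_hom}), the definition \eqref{eq:emod} realizes $e_{\rm mod}=u^H-\hat u^H\in U_{\#}^H$, and subtracting the two Galerkin identities gives
\[
\sum_{S_k\in\mathcal{T}}H_k\,\psi^0(X_{i^{\rm coll}_k})\,D_X e_{\rm mod}|_{S_k}\,D_X v^H|_{S_k}
=
\sum_{S_k\in\mathcal{T}}H_k\,\delta_k\,D_X\hat u^H|_{S_k}\,D_X v^H|_{S_k},
\qquad \delta_k:=\left<\psi^0\right>_{S_k}-\psi^0(X_{i^{\rm coll}_k}),
\]
for all $v^H\in U_{\#}^H$.

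The key structural simplification is that $\hat u^H$ and $e_{\rm mod}$ are affine on each $S_k$, so $D_X\hat u^H$ and $D_X e_{\rm mod}$ are constant there and only the single number $\delta_k$ survives per element. Taking $v^H=e_{\rm mod}$ and using coercivity \eqref{equ:coerc_bound_psi0} together with a Cauchy--Schwarz inequality on the element sum yields
\[
c_\psi\,|e_{\rm mod}|_{H^1}^2\le \max_k|\delta_k|\;|\hat u^H|_{H^1}\,|e_{\rm mod}|_{H^1},
\]
and since $\hat u^H$ obeys the stability bound $|\hat u^H|_{H^1}\le c_\psi^{-1}\|f\|_{H^{-1}}$ (the analogue of \eqref{eq:homogenized-problem:estimate}), every assertion reduces to estimating $\max_k|\delta_k|$; the $L^2$ half of each inequality then follows from the discrete Poincar\'e estimate $\|e_{\rm mod}\|_{L^2}\le\frac{1}{2\sqrt3}|e_{\rm mod}|_{H^1}$ of Lemma \ref{lem:Hminus1-through-L2}, valid because $e_{\rm mod}\in U_{\#}^H$. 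Part (c) is then immediate: if $\psi$ is independent of $X_i$ then $\psi^0$ is constant, so $\delta_k=0$ for every $k$ and $e_{\rm mod}=0$. Part (a) is equally direct: both $X_{i^{\rm coll}_k}$ and every $X_i\in S_k$ lie in an interval of length $H_k\le H$, so the Lipschitz bound \eqref{eq:psi0_W1infty} gives $|\delta_k|\le|\psi^0|_{W^{1,\infty}}H\le\frac{C_\psi}{c_\psi}C'_\psi\,H$, hence $|e_{\rm mod}|_{H^1}\le C_{10}H\|f\|_{H^{-1}}$.

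The substance is part (b), the genuinely second-order bound $\max_k|\delta_k|=O(H^2)+O(\epsilon)$, and this is where I expect the main difficulty. The mechanism is that collocating at the centre of $S_k$ turns the coefficient sampling into a midpoint rule, which is second-order accurate. Writing $X_c^k:=\tfrac{X_{i_k}+X_{i_{k+1}-1}}{2}$, I would split $\delta_k=\big(\left<\psi^0\right>_{S_k}-\psi^0(X_c^k)\big)+\big(\psi^0(X_c^k)-\psi^0(X_{i^{\rm coll}_k})\big)$. The second bracket is controlled by \eqref{eq:additional_condition_1} and \eqref{eq:psi0_W1infty}: since $|X_c^k-X_{i^{\rm coll}_k}|\le\epsilon C_{\rm coll}$ it is at most $\frac{C_\psi}{c_\psi}C'_\psi\,C_{\rm coll}\,\epsilon$, giving the $C_{12}\epsilon$ term. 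The first bracket is the midpoint-rule error for the discrete average: expanding $\psi^0(X_i)$ about $X_c^k$, the linear term is $D_X\psi^0(X_c^k)\left<X_i-X_c^k\right>_{S_k}$, which vanishes exactly because $X_c^k$ is the barycentre of the equally spaced nodes of $S_k$, while the quadratic remainder is bounded using \eqref{eq:additional_condition_2} by $\tfrac12 C_{\psi^0}''\left<(X_i-X_c^k)^2\right>_{S_k}\le\tfrac18 C_{\psi^0}''H^2$, giving the $C_{11}H^2$ term. Feeding $\max_k|\delta_k|\le C_{11}H^2+C_{12}\epsilon$ into the energy estimate proves (b).

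The delicate points I anticipate are, first, justifying rigorously that the collocated micro problem reproduces exactly the point value $\psi^0(X_{i^{\rm coll}_k})$ as the effective coefficient on $S_k$ (so that $u^H$ is truly a constant-coefficient Galerkin solution and the whole discrepancy collapses onto the scalar $\delta_k$); and second, carrying out the linear-term cancellation for the \emph{discrete} average over the atoms of $S_k$ rather than a continuous integral, which is exactly why $X_c^k$ must be taken as the nodal barycentre and why only $O(\epsilon)$ slack is permitted in \eqref{eq:additional_condition_1}. Once $\max_k|\delta_k|$ is in hand, the passage to $|e_{\rm mod}|_{H^1}$ and then, via Lemma \ref{lem:Hminus1-through-L2}, to $\|e_{\rm mod}\|_{L^2}$ is routine.
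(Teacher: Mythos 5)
Your proposal is correct and follows essentially the same route as the paper's proof: both set up a Strang-type perturbation argument between the two Galerkin solutions in $U_{\#}^H$, exploit the elementwise constancy of $D u^H$ and $D v^H$ to reduce everything to the per-element coefficient discrepancy (the paper's $\check\psi^0$ trick in part (b), your $\delta_k$), bound that discrepancy by the Lipschitz estimate \eqref{eq:psi0_W1infty} for part (a) and by the barycentre/midpoint cancellation under \eqref{eq:additional_condition_1} and \eqref{eq:additional_condition_2} for part (b), and conclude via the stability bound \eqref{eq:discretized_problem_estimates} and the discrete Poincar\'e inequality. The only differences are cosmetic: the paper Taylor-expands about the lattice point $X_{i^{\rm coll}_k}$ rather than the possibly non-lattice barycentre $X_c^k$ (which spares it from having to give meaning to $\psi^0(X_c^k)$ and $D_X\psi^0(X_c^k)$ when $i_{k+1}-1-i_k$ is odd), and your $L^2$ step should cite Corollary \ref{cor:discrete-poincare-for-Unsharp} rather than Lemma \ref{lem:Hminus1-through-L2}.
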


\subsection{Error analysis}
\label{sub:HQC_analysis}

We start our analysis with the following lemma which asserts that the results stated in Section \ref{sub:HQC_analysis_prel} can be reformulated in terms of the standard QC method applied to the homogenized equations \eqref{eq:linear-nn_homogenized}.
Recall the definition of the homogenized tensor
\[
\psi^0(X_i) = \left<\psi(\bullet) \left(1+D_Y \chi(X_i, \bullet)\right)\right>_Y
\]
and define the collocated homogenized tensor $\psi^0_{\rm coll}(X_i) = \psi^0(X_{i^{\rm coll}_k})$ for $X_i\in S_k^{\rm rep}$.

\begin{lemma}
\label{lem:equv_hmm_QC_corr}
Under the assumptions of Theorem \ref{thm:analysis-of-methods:reconstructed-convergence}, the reconstruction $u^{H,c}$ \eqref{eq:HQC:periodic-extension}
can be written as
\begin{equation}
\label{eq:reconstruction_equiv}
u^{H,c}|_{S_k}= u^H + \epsilon \chi(X_{i_k^{\rm coll}}, X_i/\eps) D u^H.
\end{equation}
\end{lemma}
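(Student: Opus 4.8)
The plan is to reduce the statement to an explicit computation of the microproblem solution $\calR_k(u^H)$ in the linear nearest-neighbor setting, using the collocated tensor, and then to recognize the resulting structure as the corrector formula built from the cell-problem function $\chi$. The key observation is that the analyzed HQC method \eqref{eq:HQC:problem_analyzed} uses a tensor $\psi^\eps_{\rm coll}$ that, by \eqref{equ:hom_tens_collocate}, depends on the slow variable only through the fixed collocation point $X_{i_k^{\rm coll}}$ inside each sampling domain $S_k^{\rm rep}$. So on a single $S_k^{\rm rep}$ the microproblem \eqref{eq:hom_microproblem_collocate} is exactly the discrete cell problem for the one-parameter-family tensor $\psi(X_{i_k^{\rm coll}}, \bullet)$, which is precisely the setting of \eqref{equ:chi} with $X_i$ frozen at $X_{i_k^{\rm coll}}$.

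First I would write down the microproblem \eqref{eq:hom_microproblem_collocate} explicitly. Since the interaction is linear and nearest-neighbor, and since $u^H$ is affine on $S_k$ (so $D_X u^H$ is the constant slope, call it $g_k$, on the interval), the constraint $\calR_k(u^H) - u^H \in U_\#^p(\eps\mathbb Z)$ together with \eqref{eq:hom_microproblem_collocate} determines $\calR_k(u^H)$ uniquely. I would seek the solution in the form $\calR_k(u^H) = u^H + \epsilon\, w$, where $w \in U_\#^p(\eps\mathbb Z)$ is the fluctuation. Substituting and using $D_X u^H = g_k$ constant turns \eqref{eq:hom_microproblem_collocate} into
\[
\left<\psi^\eps_{\rm coll}\, (g_k + D_X w),\, D_X s\right>_{X_i\in S_k^{\rm rep}} = 0
\quad \forall s\in U_\#^p(\eps\mathbb Z),
\]
which is, after factoring out the constant $g_k$, exactly the variational cell problem \eqref{eq:linear-nn_chi}/\eqref{equ:chi_variat} for the tensor $\psi(X_{i_k^{\rm coll}}, \bullet)$. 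By the uniqueness granted in Proposition \ref{lem:chi_exist}, its solution is $w = \chi(X_{i_k^{\rm coll}}, X_i/\eps)\, g_k$, up to the zero-average normalization that $U_\#^p$ enforces. Hence $\calR_k(u^H) = u^H + \epsilon\, \chi(X_{i_k^{\rm coll}}, X_i/\eps)\, D_X u^H$ on $S_k^{\rm rep}$.

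Next I would propagate this from the sampling domain $S_k^{\rm rep}$ to the whole element $S_k$ via the reconstruction \eqref{eq:HQC:periodic-extension}. The extension $u^{H,c}$ is defined by periodic repetition of $\calR_k(u^H)$ outside $S_k^{\rm rep}$, i.e. by evaluating the fluctuation at $\overline{X}_i$, the position folded back into one period. Because $\chi(X_{i_k^{\rm coll}}, \bullet)$ is $p$-periodic in the fast variable $Y = X/\eps$ by Proposition \ref{lem:chi_exist}, and because $D_X u^H$ is the same constant $g_k$ across all of $S_k$ (the macro function being affine there), the folded evaluation at $\overline{X}_i$ coincides with $\chi(X_{i_k^{\rm coll}}, X_i/\eps)\, D_X u^H$ directly: the $p$-periodicity in $Y$ makes the modular reduction in $\overline{X}_i$ invisible. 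This gives exactly \eqref{eq:reconstruction_equiv}.

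\emph{The main obstacle} will be the bookkeeping at the interface between the sampling domain and its periodic extension across the rest of $S_k$: one must check carefully that folding $X_i$ into $\overline{X}_i$ indeed returns the same value as the unfolded formula, which hinges on the fast-variable $p$-periodicity of $\chi$ and on $D_X u^H$ being constant on all of $S_k$ (not just on $S_k^{\rm rep}$). A secondary point to verify is that the zero-average normalizations match: $\calR_k(u^H)-u^H$ lies in $U_\#^p$ and $\chi(X_{i_k^{\rm coll}},\bullet)\in U_\#^p$, so no spurious additive constant appears, and the identification of $w$ with $\chi\, D_X u^H$ is exact rather than merely up to a constant. Everything else is the routine linear-algebra verification that the frozen-coefficient microproblem is the cell problem.
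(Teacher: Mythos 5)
Your proposal is correct and follows essentially the same route as the paper's proof: write $\calR_k(u^H)$ as $u^H$ plus a zero-average fluctuation, use the fact that $D_X u^H$ and the frozen slow argument of $\psi^\eps_{\rm coll}$ are constant on $S_k^{\rm rep}$ to reduce the microproblem \eqref{eq:hom_microproblem_collocate} to the cell problem \eqref{eq:linear-nn_chi} (the paper does this via the substitution $w(X_i)=\tilde w(X_i/\eps)$), identify the fluctuation with $\epsilon\,\chi(X_{i_k^{\rm coll}},X_i/\eps)\,D_X u^H$ by uniqueness, and observe that the $p$-periodicity of $\chi(X_{i_k^{\rm coll}},\bullet)$ makes the modular reduction in \eqref{eq:HQC:periodic-extension} invisible, yielding \eqref{eq:reconstruction_equiv} on all of $S_k$. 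One small notational slip: with your ansatz $\calR_k(u^H)=u^H+\epsilon w$, $w\in U_{\#}^{p}(\eps\mathbb Z)$, the displayed micro-equation should read $\left<\psi^\eps_{\rm coll}\,(g_k+\epsilon D_X w),\,D_X s\right>_{X_i\in S_k^{\rm rep}}=0$ (the factor $\epsilon$, equivalently the rescaling $D_X=\epsilon^{-1}D_Y$, is precisely what the paper tracks explicitly), but your stated solution $w=\chi\, g_k$ and the final formula correspond to the correct scaling, so the argument goes through.
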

\begin{proof}
Fix the element $S_k$ and notice that the reconstruction defined by \eqref{eq:hom_microproblem_collocate} can be written as $\calR_{k}\left(u^H\right) = u^H + w$, where $w$ satisfies
\[
\left<\psi^\eps_{\rm coll} D_X w,~ D_X s\right>_{X_i\in S_{k}^{\rm rep}}
= - D_X u^H \left<\psi^\eps_{\rm coll},~ D_X s\right>_{X_i\in S_{k}^{\rm rep}}
\quad \forall {s}\in U_{\#}^{p}(\eps\mathbb Z)
.
\]
Notice that $\psi^\eps_{\rm coll}$ and $D_X u^H$ are constant inside each sampling domain $S_k^{\rm rep}$.
Upon substitution $w(X_i) = \tilde{w}(X_i/\epsilon) = \tilde{w}(Y_i)$ this equation takes the form
\[
\epsilon^{-1} \left<\psi^\eps_{\rm coll} D_Y \tilde{w},~ D_Y s\right>_Y
= - D_X u^H \left<\psi^\eps_{\rm coll},~ D_Y s\right>_Y
\quad \forall {s}\in U_{\#}^{p}(\mathbb Z)
,
\]
and its solution can be written as $w(X_i) = \tilde{w}(X_i/\epsilon) = \epsilon D_X u^H \chi(X_{i^{\rm coll}_k}, X_i/\eps)$, cf.\ \eqref{eq:linear-nn_chi}.
Finally, noticing that periodically extending $w$ in
\begin{equation}\label{eq:reconstruction_equiv:intermediate}
\calR_{k}\left(u^H\right)
= u^H + w
= u^H + \epsilon D_X u^H \chi(X_{i^{\rm coll}_k}, X_i/\eps)
\end{equation}
yields exactly $u^{H,c}$ concludes the proof of \eqref{eq:reconstruction_equiv}.
\end{proof}

\begin{lemma}
\label{lem:equv_hmm_QC}
Under the assumptions of Theorem \ref{thm:num_hom}, the problem \eqref{eq:HQC:problem_analyzed} is equivalent to the following problem:
find $u^H\in U^H_{\#}$ such that
\begin{equation}
\left<\psi^0_{\rm coll} D u^H, D {v}^H\right>_X=
\left<{f}, {v}^H\right>_X
\quad \forall {v}^H\in U^H_{\#}.
\label{eq:QC-for-homogenized-collocated}
\end{equation}
\end{lemma}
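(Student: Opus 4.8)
Since both \eqref{eq:HQC:problem_analyzed} and \eqref{eq:QC-for-homogenized-collocated} are linear variational problems posed over the same space $U_{\#}^H$ with the \emph{same} right-hand side $\left<f, v^H\right>_X$, the plan is to prove that their left-hand sides define the same bilinear form on $U_{\#}^H\times U_{\#}^H$; equivalence of the two problems (i.e.\ having the same solution) then follows at once. It therefore suffices to verify, element by element, that the local sampling-domain contribution to $(E^{\rm HQC})'$ reproduces the local piece of $\left<\psi^0_{\rm coll} Du^H, Dv^H\right>_X$.

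First I would substitute the explicit reconstruction. By Lemma \ref{lem:equv_hmm_QC_corr} (see \eqref{eq:reconstruction_equiv:intermediate}) together with linearity of the cell problem (Remark \ref{rem:linear_reconstruction}), on a fixed element $S_k$ one has $\calR_k(u^H) = u^H + \epsilon\, D_X u^H\, \chi(X_{i^{\rm coll}_k}, X_i/\epsilon)$ and the analogous formula for $v^H$, where $D_X u^H$ and $D_X v^H$ are constant on $S_k$ (hence on $S_k^{\rm rep}$) because $u^H, v^H$ are piecewise affine. Applying $D_X$ and noting that, with the slow slot frozen at $X_{i^{\rm coll}_k}$ and $Y_i = X_i/\epsilon$, one has $\epsilon\, D_X[\chi(X_{i^{\rm coll}_k}, X_i/\epsilon)] = D_Y\chi(X_{i^{\rm coll}_k}, Y_i)$, I obtain
\[
D_X\calR_k(u^H) = D_X u^H\,\bigl(1 + D_Y\chi(X_{i^{\rm coll}_k}, Y_i)\bigr),
\]
and the same identity with $u$ replaced by $v$.

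Next I would insert these into the sampling-domain form in \eqref{eq:HQC:problem_analyzed}. Factoring out the constants $D_X u^H$ and $D_X v^H$ collapses the local contribution to $D_X u^H\, D_X v^H$ multiplied by $\frac{1}{p}\sum_{X_i\in S_k^{\rm rep}}\psi(X_{i^{\rm coll}_k}, Y_i)\bigl(1+D_Y\chi(X_{i^{\rm coll}_k}, Y_i)\bigr)^2$. Because $S_k^{\rm rep}$ contains exactly $p$ atoms and $\psi,\chi$ are $p$-periodic in $Y$, this sampling average coincides with $\left<\psi(1+D_Y\chi)^2\right>_Y$ evaluated at $X=X_{i^{\rm coll}_k}$. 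The crux of the argument, which I expect to be the main point, is the reduction of this quadratic average to the effective coefficient: testing the cell problem \eqref{eq:linear-nn_chi} with the admissible function $s=\chi\in U_{\#}^p$ gives $\left<\psi(1+D_Y\chi)\,D_Y\chi\right>_Y = 0$, whence by \eqref{eq:linear-nn_homogenized-tensor}
\[
\left<\psi(1+D_Y\chi)^2\right>_Y = \left<\psi(1+D_Y\chi)\right>_Y = \psi^0(X_{i^{\rm coll}_k}) = \psi^0_{\rm coll}.
\]
This is the familiar Galerkin orthogonality of the corrector that collapses the microscopic energy onto the homogenized tensor.

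It remains to reassemble over the macroscopic partition. The local contribution is thus $\psi^0_{\rm coll}\, D_X u^H\, D_X v^H$, a constant on $S_k$; multiplying by $H_k$ and summing over $S_k\in\mathcal T$ must be matched against $\left<\psi^0_{\rm coll} Du^H, Dv^H\right>_X$. The only bookkeeping here is the passage between the weights: each element $S_k$ carries $H_k/\epsilon$ atoms while $\left<\cdot\right>_X$ normalizes by $N$, and with the normalization $N\epsilon=1$ (the period has unit length, as already used in writing $\Eint$ with prefactor $\epsilon$) the factor $H_k/(N\epsilon)$ reduces to $H_k$ and restores exactly the element weight. Combining the steps yields $(E^{\rm HQC})'(u^H;v^H) = \left<\psi^0_{\rm coll} Du^H, Dv^H\right>_X$, which together with the identical right-hand sides proves the equivalence. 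The genuinely nontrivial ingredient is the corrector orthogonality above; everything else is the careful matching of the $\frac{1}{p}$-sampling average, the $Y$-average, and the global $\left<\cdot\right>_X$.
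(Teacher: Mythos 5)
Your proof is correct and follows essentially the same route as the paper's: both rest on the explicit reconstruction formula of Lemma \ref{lem:equv_hmm_QC_corr}, the identification of the sampling-domain average with the $Y$-average, and the cell-problem (Galerkin) orthogonality, reassembled over the partition with the implicit normalization $N\epsilon=1$. The only cosmetic difference is one of ordering: the paper applies the orthogonality first (via Remark \ref{rem:simplify-the-bilinear-form}, replacing $D_X\calR_k(v^H)$ by $D_X v^H$) and then averages $\psi^{\epsilon}_{\rm coll}D_X\calR_k(u^H)$, whereas you expand both reconstructions and reduce the quadratic average $\left<\psi(1+D_Y\chi)^2\right>_Y$ to $\left<\psi(1+D_Y\chi)\right>_Y=\psi^0_{\rm coll}$ by testing the cell problem with $s=\chi$ --- the same identity applied after, rather than before, the expansion.
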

\begin{proof}
We continue the argument of the previous lemma: we first fix the element $S_k$ and
differentiate \eqref{eq:reconstruction_equiv:intermediate} to get
\[
D_X \calR_{k}\left(u^H\right)
= D_X u^H + \epsilon D_X u^H D_Y \chi(X_{i^{\rm coll}_k}, X_i/\eps)
= D_X u^H \left( 1 + D_Y \chi(X_{i^{\rm coll}_k}, Y_i) \right)
.
\]
Hence we compute
\begin{align*}
\left<\psi^\eps_{\rm coll} D_X \calR_{k}\left(u^H\right)\right>_{X_i\in S_{k}^{\rm rep}}
=~&
D_X u^H \left<\psi^\eps_{\rm coll} \left(1 + D_Y \chi(X_{i^{\rm coll}_k}, Y_i)\right) \right>_Y
\\ =~&
\psi^0(X_{i^{\rm coll}_k}) D_X u^H = \psi^0_{\rm coll} D_X u^H
.
\end{align*}
Finally, the following computation then shows that the left-hand side of \eqref{eq:HQC:problem_analyzed} is equal to that of \eqref{eq:QC-for-homogenized-collocated}:
\begin{align*}
(E^{\rm HQC})'(u^H; v^H)
=~&
\sum\limits_{S_k\in{\mathcal T}} H_k
\left<\psi^{\epsilon}_{\rm coll} D_X \calR_k(u^H), D_X v^H\right>_{X_i\in S_{k}^{\rm rep}}
\\ =~&
\sum\limits_{S_k\in{\mathcal T}} H_k
\left<\psi^{\epsilon}_{\rm coll} D_X \calR_k(u^H) \right>_{X_i\in S_{k}^{\rm rep}} D_X v^H
\\ =~&
\sum\limits_{S_k\in{\mathcal T}} H_k
\psi^0_{\rm coll} D_X u^H D_X v^H
=
\left<\psi^0_{\rm coll} D_X u^H D_X v^H\right>_X
,
\end{align*}
where we used Remark \ref{rem:simplify-the-bilinear-form} in the first step of this derivation, and omitted the argument $(X_i)$ or $(X_{i^{\rm coll}_k})$ of the functions $D_X u^H$, $D_X v^H$, and $\psi^0_{\rm coll}$ since they are constant on each interval $S_k\in{\cal T}$.
Thus, \eqref{eq:HQC:problem_analyzed} and \eqref{eq:QC-for-homogenized-collocated} are equivalent.
\end{proof}

Define the modeling error as
\begin{equation}
e_{\rm mod} = u^H-\tildeu^H,
\label{eq:emod}
\end{equation}
where $\tildeu^H$ is the solution of the following problem:
\begin{equation}
\left<{\psi}^0 D \tildeu^H, D {v}^H\right>_X=
\left<{f}, {v}^H\right>_X
\quad \forall {v}^H\in U^H_{\#}
.
\label{eq:QC-for-homogenized}
\end{equation}

\begin{proposition}\label{prop:discretized_problem}
Solutions $u^H$, $\tildeu^H$ of the discretized problems \eqref{eq:QC-for-homogenized-collocated} and \eqref{eq:QC-for-homogenized} exist, are unique, and satisfy the following estimates:
\begin{equation}
|u^H|_{H^1} \le c_{\psi}^{-1} |f|_{H^{-1}},
\qquad
|\tildeu^H|_{H^1} \le c_{\psi}^{-1} |f|_{H^{-1}}.
\label{eq:discretized_problem_estimates}
\end{equation}
\end{proposition}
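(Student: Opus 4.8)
The plan is to establish both existence-uniqueness and the a priori bounds by recognizing that \eqref{eq:QC-for-homogenized-collocated} and \eqref{eq:QC-for-homogenized} are both discrete elliptic variational problems on the finite-dimensional space $U_{\#}^H$, so that the Lax--Milgram theorem applies once coercivity and boundedness of the bilinear forms are verified. This is essentially the same structure as Proposition \ref{prop:original-problem}, with $U_{\#}^N(\eps\mathbb Z)$ replaced by its subspace $U_{\#}^H\subset U_{\#}^N(\eps\mathbb Z)$ and the tensor $\psi^\eps$ replaced by $\psi^0_{\rm coll}$ or $\psi^0$. Since $U_{\#}^H$ is a linear subspace of $U_{\#}^N(\eps\mathbb Z)$, the ambient norms $|\cdot|_{H^1}$ and the duality pairing with $f$ are inherited directly, and $\left<f,\bullet\right>_X$ remains a bounded linear functional on $U_{\#}^H$ because $\left<f\right>_X=0$.

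\textbf{First I would} verify coercivity of both bilinear forms. For the collocated tensor, the key observation is that $\psi^0_{\rm coll}(X_i)=\psi^0(X_{i^{\rm coll}_k})$ takes values of $\psi^0$ at the collocation points, and by \eqref{equ:coerc_bound_psi0} each such value satisfies $\psi^0\ge c_\psi$; hence $\psi^0_{\rm coll}(X_i)\ge c_\psi$ for all $i$. This yields
\[
\left<\psi^0_{\rm coll}\, D u^H, D u^H\right>_X \ge c_\psi \left<D u^H, D u^H\right>_X = c_\psi |u^H|_{H^1}^2
\quad \forall u^H\in U_{\#}^H,
\]
and the identical argument with $\psi^0(X_i)\ge c_\psi$ gives coercivity for \eqref{eq:QC-for-homogenized}. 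Boundedness of both forms follows from the upper bound $\psi^0\le C_\psi$ in \eqref{equ:coerc_bound_psi0} together with Cauchy--Schwarz. Thus Lax--Milgram furnishes existence and uniqueness of $u^H$ and $\tildeu^H$ in $U_{\#}^H$.

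\textbf{Then I would} derive the stated estimates \eqref{eq:discretized_problem_estimates} exactly as in Proposition \ref{prop:original-problem}: testing \eqref{eq:QC-for-homogenized-collocated} with $v^H=u^H$ and using coercivity on the left and the definition of the $H^{-1}$ norm on the right gives
\[
c_\psi |u^H|_{H^1}^2 \le \left<\psi^0_{\rm coll} D u^H, D u^H\right>_X = \left<f, u^H\right>_X \le |f|_{H^{-1}}\, |u^H|_{H^1},
\]
and dividing by $|u^H|_{H^1}$ yields $|u^H|_{H^1}\le c_\psi^{-1}|f|_{H^{-1}}$; the bound on $\tildeu^H$ is obtained in the same way from \eqref{eq:QC-for-homogenized}.

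\textbf{The main obstacle}, such as it is, is largely bookkeeping rather than mathematics: one must confirm that the coercivity lower bound $c_\psi$ genuinely carries over to the collocated tensor $\psi^0_{\rm coll}$, which it does because collocation only samples $\psi^0$ at specific points without altering its range. One should also check the minor technical point that $\left<f,\bullet\right>_X$ is a well-defined bounded functional on $U_{\#}^H$ --- this uses $\left<f\right>_X=0$ so that the pairing is insensitive to the (zero) mean constraint, and the fact that the duality norm $|f|_{H^{-1}}$ is taken over the larger space $U_{\#}^N(\eps\mathbb Z)$, so the supremum controlling the functional on the subspace $U_{\#}^H$ is dominated by it. Beyond these observations the proof is a direct transcription of Proposition \ref{prop:original-problem}.
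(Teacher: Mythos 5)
Your proposal is correct and follows essentially the same route as the paper: the paper's proof is a one-line remark that the argument of Proposition \ref{prop:original-problem} (Lax--Milgram with coercivity from $c_{\psi}\le\psi^0\le C_{\psi}$ and $c_{\psi}\le\psi^0_{\rm coll}\le C_{\psi}$) carries over after replacing $U_{\#}^{N}(\eps\mathbb{Z})$ by the subspace $U_{\#}^H$. You simply spell out the details --- coercivity of the collocated tensor, testing with $v^H=u^H$, and the fact that the $H^{-1}$ duality norm over the larger space dominates the pairing on the subspace --- all of which match the intended argument.
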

\begin{proof}
The statement can be proved similarly to Proposition \ref{prop:original-problem}, by noticing that $c_{\psi}\le {\psi}^0\le C_{\psi}$ and $c_{\psi}\le {\psi}^0_{\rm coll} \le C_{\psi}$, and substituting the original space $U^N_{\#}(\eps\mathbb{Z})$ with the discretized space $U^H_{\#}$.
\end{proof}

We can now prove Theorem \ref{thm:modeling_error}.
\begin{proof}[Proof of Theorem \ref{thm:modeling_error}.]

{\itshape Part (c)} of the theorem is trivial: if $\psi=\psi(Y_i)$ in \eqref{equ:hom_tens_collocate}, then $\psi^0(X_i)$ is constant, hence $\psi^0_{\rm coll}$ coincides with $\psi^0$, and therefore $u^H = \tildeu^H$.

The bound in part (a) is based on the following estimate:
\begin{align}
c_\psi \left<D (\tildeu^H - u^H), D v^H\right>_X
\le~&
\left|\left<\psi^0_{\rm coll} D (\tildeu^H - u^H), D v^H\right>_X\right|
\notag \\ =~&
\left|\left<\psi^0_{\rm coll} D \tildeu^H - \psi^0_{\rm coll} D u^H, D v^H\right>_X\right|
\notag \\ =~&
\left|\left<{\psi}^0_{\rm coll} D\tildeu^H - {\psi}^0 D\tildeu^H, D {v}^H\right>_X\right|
\notag \\ \le~&
\|{\psi}^0-{\psi}^0_{\rm coll}\|_{L^{\infty}}\, c_{\psi}^{-1} \|f\|_{H^{-1}} ~|v^H|_{H^1}
,
\label{eq:modeling_error:estimate_case_a}
\end{align}
where the last estimate follows from \eqref{eq:discretized_problem_estimates}.
The difference ${\psi}^0-{\psi}^0_{\rm coll}$ in \eqref{eq:modeling_error:estimate_case_a} can be estimated as follows: for $X_i\in S_k$
\[
|\psi^0(X_i)-{\psi}^0_{\rm coll}(X_i)|
=
|\psi^0(X_i)-\psi^0(X_{i^{\rm coll}_k})|
\le
|X_{i^{\rm coll}_k} - X_i| \|D_X \psi^0\|
\le
H_k \frac{C_\psi}{c_\psi} C_\psi',
\]
hence $\|\psi^0-{\psi}^0_{\rm coll}\|_{L^{\infty}} \le H \frac{C_\psi}{c_\psi} C_\psi'$.
Taking now supremum over $|v^H|_{H^1}=1$ concludes the proof of part (a).

To show (b), first observe that in \eqref{eq:QC-for-homogenized-collocated}, $D u^H$ and $D v^H$ are constant on any interval $S_k\in{\mathcal T}$.
Therefore $\psi^0_{\rm coll}$ can be changed to any other tensor with the same average over $S_k$.
Hence define $\check{\psi}^0(X_i) := \psi^0_{\rm coll}(X_i) + \psi^0(X_i) - \left<\psi^0\right>_{X_i\in S_k}$.
Since $\left<\psi^0_{\rm coll}\right>_{X_i\in S_k} = \left<\check{\psi}^0\right>_{X_i\in S_k}$, the solution $u^H$ of \eqref{eq:QC-for-homogenized-collocated} coincides with the solution of
\[
\left<\check{\psi}^0 D u^H, D {v}^H\right>_X=
\left<{f}, {v}^H\right>_X
\quad \forall {v}^H\in U^H_{\#}
.
\]
Hence we can use the arguments of \eqref{eq:modeling_error:estimate_case_a} to estimate:
\begin{equation}
c_\psi |u^H - \tildeu^H|_{H^1}
\le
\|{\psi}^0-\check{\psi}^0\|_{L^{\infty}}\, c_{\psi}^{-1}\|f\|_{H^{-1}},
\label{eq:modeling_error:estimate_case_b}
\end{equation}
where
\begin{align*}
& \psi^0(X_m) - \check{\psi}^0(X_m)
=
\left<\psi^0\right>_{X_i\in S_k} - \psi^0_{\rm coll}(X_m)
= \left<\psi^0\right>_{X_i\in S_k} - \psi^0(X_{i^{\rm coll}_k})
\\ & \qquad  =~
\left<\psi^0(X_i) - \psi^0(X_{i^{\rm coll}_k})\right>_{X_i\in S_k}
\\ & \qquad  =~
\left<
	(X_i - X_{i^{\rm coll}_k}) D \psi^0(X_{i^{\rm coll}_k})
	+ \epsilon \sum\limits_{j = \min(i_k^{\rm coll}, i)}^{\max(i_k^{\rm coll}, i)-1} |X_i-X_{j+1}| D^2 \psi^0(X_j)
\right>_{X_i\in S_k}
\\ & \qquad  =:~
\left<Q_1 + Q_2\right>_{X_i\in S_k}
.
\end{align*}
It is straightforward to show that $Q_1$ averages up to
\[
\left<Q_1\right>_{X_i\in S_k}
=
	\left(\frac{X_{i_{k+1}-1}+X_{i_k}}{2} - X_{i^{\rm coll}_k}\right) D \psi^0(X_{i^{\rm coll}_k}),
\]
and can be effectively estimated using \eqref{eq:additional_condition_1}.
The second terms can be estimated as
\[
|Q_2|
\le
\|D^2 \psi^0(X_j)\|_{L^\infty}\,\epsilon \sum\limits_{j = \min(i_k^{\rm coll}, i)}^{\max(i_k^{\rm coll}, i)-1} |X_i-X_{j+1}|
\le
\|D^2 \psi^0(X_j)\|_{L^\infty} \frac{1}{2} H_k^2
.
\]
Thus, combining these estimates, one gets
\[
|\psi^0(X_m) - \check{\psi}^0(X_m)| \le
\epsilon C_{\rm coll} |D \psi^0(X_{i^{\rm coll}_k})| + \frac{1}{2} H_k^2 \|D^2 \psi^0(X_j)\|_{L^\infty}.
\]
Taking maximum over all $X_m$,
\[
\|\psi^0 - \check{\psi}^0\|_{L^\infty}
\le
\epsilon C_{\rm coll}\,\frac{C_\psi}{c_\psi}C_\psi' + \frac{1}{2} H^2 C_{\psi^0}''
,
\]
and substituting $\|\psi^0 - \check{\psi}^0\|_{L^\infty}$ into \eqref{eq:modeling_error:estimate_case_b} yields the desired result.
\end{proof}

In view of Lemma \ref{lem:equv_hmm_QC}, we will turn to analysis of the problem \eqref{eq:QC-for-homogenized}.
We next introduce the (homogenized) energy norm in the space $U^N_{\#}(\eps\mathbb{Z})$:
\[
\|{w}\|_{\psi^0}^2 = \left<\psi^0 D {w}, D {w}\right>_X.
\]
Obviously, under the assumption \eqref{equ:coerc_bound_psi}, due to the estimate \eqref{equ:coerc_bound_psi0}, the energy norm is equivalent to the $H^1$-norm:
\begin{equation}
\label{equ:norm_equiv}
c_{\psi} |\bullet|_{H^1}^2 \le \|\bullet\|_{\psi^0}^2 \le C_{\psi} |\bullet|_{H^1}^2.
\end{equation}

\begin{lemma}
\label{lem:best-approximation}
Under the assumptions of Theorem \ref{thm:apriory-H1}, let $u^0\in U^N_{\#}(\eps\mathbb{Z})$ be the solution to the exact homogenized equations \eqref{eq:linear-nn_homogenized}, and $\tildeu^H\in U^H_{\#}$ be the solution to the QC equations \eqref{eq:QC-for-homogenized}.
Then $\tildeu^H$ is the best approximation to the exact solution $u^0$ in the energy norm, i.e.,
\begin{equation}
\label{equ:best-approximation}
\left\|\tildeu^H-u^0\right\|_{\psi^0} \le \left\|{v}^H-u^0\right\|_{\psi^0}
\quad
\forall v^H\in U^H_{\#}.
\end{equation}
\end{lemma}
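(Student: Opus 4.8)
The plan is to recognize Lemma~\ref{lem:best-approximation} as the discrete analogue of C\'ea's lemma, which applies because the bilinear form $a(w,z):=\left<\psi^0 D w, D z\right>_X$ behind \eqref{eq:QC-for-homogenized} is symmetric and, by the coercivity estimate \eqref{equ:norm_equiv}, induces precisely the energy norm $\|\cdot\|_{\psi^0}$ as a genuine norm on $U^N_{\#}(\eps\mathbb{Z})$. The whole argument rests on Galerkin orthogonality together with an orthogonal decomposition, so no new analytic input beyond what is already established is needed.

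First I would write the weak form of the exact homogenized equation \eqref{eq:linear-nn_homogenized}: testing the strong form $-D_X(\psi^0 D_X u^0)=T_X f$ against an arbitrary $v\in U^N_{\#}(\eps\mathbb{Z})$ and invoking Lemma~\ref{lem:integration-by-parts} (exactly as the passage from \eqref{eq:variational_equation_linear-nn_i} to \eqref{eq:variational_equation_linear-nns} was carried out, now with $\psi^0$ in place of $\psi^\eps$) yields
\[
\left<\psi^0 D u^0, D v\right>_X = \left<f, v\right>_X \qquad \forall v\in U^N_{\#}(\eps\mathbb{Z}).
\]
Since $U^H_{\#}\subset U^N_{\#}(\eps\mathbb{Z})$ --- piecewise affine $N$-periodic functions of zero average form a subspace --- this identity holds in particular for every macro test function $v^H\in U^H_{\#}$. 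Subtracting it from the QC equation \eqref{eq:QC-for-homogenized}, which $\tildeu^H$ satisfies over the same test space, gives the Galerkin orthogonality
\[
\left<\psi^0 D(\tildeu^H - u^0), D v^H\right>_X = 0 \qquad \forall v^H\in U^H_{\#},
\]
so that the error $\tildeu^H-u^0$ is $\psi^0$-orthogonal to the entire macro space $U^H_{\#}$.

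The second step is the Pythagorean estimate. For an arbitrary $v^H\in U^H_{\#}$ I would split $v^H-u^0 = (v^H-\tildeu^H) + (\tildeu^H-u^0)$, observing that the first summand lies in $U^H_{\#}$ while the second is orthogonal to $U^H_{\#}$ in the $\psi^0$-inner product. The symmetry of $a(\cdot,\cdot)$ then forces the cross term to vanish, yielding
\[
\left\|v^H - u^0\right\|_{\psi^0}^2 = \left\|v^H - \tildeu^H\right\|_{\psi^0}^2 + \left\|\tildeu^H - u^0\right\|_{\psi^0}^2 \ge \left\|\tildeu^H - u^0\right\|_{\psi^0}^2,
\]
and taking square roots gives \eqref{equ:best-approximation}.

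I do not anticipate a genuine obstacle: this is the textbook symmetric-Galerkin best-approximation argument, and all of its hypotheses are already in hand. The only two points deserving explicit care are confirming the inclusion $U^H_{\#}\subset U^N_{\#}(\eps\mathbb{Z})$, which legitimizes testing the exact equation against macro functions, and invoking \eqref{equ:norm_equiv} to ensure $\|\cdot\|_{\psi^0}$ is a true norm so that the orthogonality indeed annihilates the cross term rather than merely bounding it.
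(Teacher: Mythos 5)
Your proposal is correct and follows essentially the same route as the paper: the paper's proof simply states the Galerkin orthogonality $\left<\psi^0 D(\tilde{u}^H-u^0), D v^H\right>_X = 0$ for all $v^H\in U^H_{\#}$ and reads off the best-approximation property from the orthogonal-projection interpretation. You have merely filled in the standard details (deriving the orthogonality by subtracting the two variational equations, and the Pythagorean expansion), which is exactly what the paper leaves implicit.
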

\begin{proof}
The result follows from
\[
\left<{\psi}^0 D (\tildeu^H-u^0), D {v}^H\right>_X=0
\quad \forall {v}^H\in U^H_{\#},
\]
which states that $\tildeu^H$ is the orthogonal projection (in the energy norm) of $u^0$ onto $U^H_{\#}$.
\end{proof}

Following the standard procedure for the analysis of finite element methods (FEM) we will estimate $\left\|u^0-I_H u^0\right\|_{\psi^0}$, where $I_H u^0$ is the nodal interpolant of $u^0$.
This interpolant is defined for every function $v\in U_\per^{N}(\epsilon\mathbb Z)$
in the following way.
For a partition ${\cal X}_H$ of ${\cal X}$ define a function $\hat I_H v\in U_\per^{N}(\epsilon\mathbb Z)$ such that
\begin{equation}
\label{equ:interpolant}
\hat I_H v(X_{i_k})=v(X_{i_k}),
\quad k=1,\ldots K.
\end{equation}
Then set
\[
I_H v=\hat I_H v- \left<\hat I_H v\right>_X.
\]
Thus, for $v\in U_\per^{N}(\epsilon\mathbb Z)$ we have $I_H v \in U^H_{\#}$.

\begin{lemma}
\label{lem:interpolation-error}
The following estimate holds:
\[
\left|{v}-I_H v\right|_{H^1}
\le
\frac{1}{2\sqrt{3}} H \left|v\right|_{H^2}
\quad \forall v\in U_{\#}^{N}(\epsilon\mathbb Z)
,
\]
where $H = \max\limits_{1\le k\le K} H_k$ and $H_k = \epsilon (X_{i_{k+1}}-X_{i_k})$.
\end{lemma}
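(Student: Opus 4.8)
The plan is to prove this interpolation error estimate by reducing it to a local, element-by-element bound and then summing. This is the discrete analogue of the classical $P_1$ finite element interpolation estimate, so the structure should mirror that proof.

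First, I would observe that both $v - I_H v$ and $v - \hat I_H v$ have the same discrete derivative, since $I_H v$ and $\hat I_H v$ differ only by the constant $\langle \hat I_H v\rangle_X$, which is annihilated by $D$. Hence $|v - I_H v|_{H^1} = \|D(v - \hat I_H v)\|_{L^2}$, and it suffices to work with the (non-mean-adjusted) interpolant $\hat I_H v$. The advantage is that $\hat I_H v$ is piecewise affine and interpolates $v$ exactly at the nodes $X_{i_k}$, so on each element $S_k$ the error $e := v - \hat I_H v$ vanishes at both endpoints $X_{i_k}$ and $X_{i_{k+1}}$.

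The core of the argument is a one-dimensional discrete Poincaré-type inequality on a single element. On $S_k$, the function $D e$ is the discrete derivative of $e$, and because $e$ vanishes at the two endpoints, $\sum_{X_i\in S_k} D e(X_i)\,\eps = e(X_{i_{k+1}}) - e(X_{i_k}) = 0$, so $D e$ has discrete mean zero over $S_k$. Moreover, since $\hat I_H v$ is affine on $S_k$, its discrete derivative is constant there, and therefore $D^2(\hat I_H v) = 0$ on the interior, giving $D(De) = D^2 v$ on $S_k$. The plan is thus to establish a local estimate of the form $\|De\|_{L^2(S_k)} \le \frac{1}{2\sqrt3}\, H_k \, \|D^2 v\|_{L^2(S_k)}$, by exploiting that $De$ has zero mean on $S_k$ and its discrete derivative is $D^2 v$. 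Concretely, I would write $De$ at an interior node as a telescoping sum of $D^2 v$ starting from a node where $De$ is controlled (or use the zero-mean property directly), then apply a discrete Cauchy--Schwarz inequality; the constant $\frac{1}{2\sqrt3}$ is exactly what arises from the continuous estimate $\|g - \bar g\|_{L^2(0,H)} \le \frac{H}{2\sqrt3}\|g'\|_{L^2(0,H)}$ for a zero-mean function, so I expect the sharp discrete version to reproduce it (possibly up to verifying that no $O(\eps)$ boundary corrections degrade the constant).

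Finally, I would sum the local estimates over all elements $S_k$. Using $H_k \le H$ for every $k$, the squared element contributions combine as $\|De\|_{L^2(N)}^2 = \sum_k \|De\|_{L^2(S_k)}^2 \le \frac{1}{12} H^2 \sum_k \|D^2 v\|_{L^2(S_k)}^2 = \frac{1}{12} H^2 \|D^2 v\|_{L^2(N)}^2$, and taking square roots yields the claimed bound with constant $\frac{1}{2\sqrt3}$. The main obstacle I anticipate is pinning down the sharp local constant in the discrete setting: the zero-mean Poincaré constant on a discrete interval with $H_k/\eps$ nodes must be shown to be bounded by (and ideally equal to) the continuous value $\frac{1}{2\sqrt3}$, uniformly in $\eps$. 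Care is needed because the discrete derivative is a forward difference and the summation-by-parts identity (Lemma \ref{lem:integration-by-parts}) introduces shift operators; I would use that identity to handle the zero-mean projection cleanly and to verify that the discrete quadrature does not inflate the constant beyond $\frac{1}{2\sqrt3}$.
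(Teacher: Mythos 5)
Your overall architecture coincides with the paper's proof: pass from $I_H$ to $\hat I_H$ (they differ by a constant, so the $H^1$-seminorms of the errors agree), observe that on each element $S_k$ the sequence $g=D(v-\hat I_H v)$ has zero sum because the interpolation error vanishes at the two nodes, use $D^2\hat I_H v=0$ at interior points so that the increments of $g$ are $\eps D^2 v$, apply an element-local zero-mean discrete Poincar\'e inequality, and sum over the elements using $H_k\le H$. This is exactly what the paper does, with inequality \eqref{eq:discrete-poincare} of Lemma \ref{lem:discrete-poincare} as the local tool.

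The gap is in the constant, and it is precisely the point you flagged as your ``main obstacle'': it is not a technical verification but a genuine obstruction. The local bound you aim for, with constant $\frac{1}{2\sqrt3}$ (i.e.\ $\frac{H_k^2}{12}$ after squaring), is false, and so is the continuous inequality you cite in its support: for zero-mean $H^1$ functions on an interval of length $H$ the sharp Poincar\'e--Wirtinger constant is $\frac{H}{\pi}$ (the first nonzero Neumann eigenvalue), and $\frac{1}{\pi}>\frac{1}{2\sqrt3}$. The constant $\frac{1}{2\sqrt3}$ is correct only in the \emph{periodic} setting, where the wrap-around increment $g_1-g_L$ also appears on the right-hand side; this is exactly the difference between the two discrete Poincar\'e inequalities of Appendix A. Element-locally there is no periodicity, so only \eqref{eq:discrete-poincare}, with constant $\frac{L^2}{6}$, is available, and the argument then delivers $|v-I_Hv|_{H^1}\le\frac{1}{\sqrt6}H|v|_{H^2}$. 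In fact no argument can reach $\frac{1}{2\sqrt3}$, because the statement with that constant is itself false: take $N=4$, $K=2$, $i_1=1$, $i_2=3$, and $v=(0,a,0,-a)$ with $a\ne0$ (this $v$ has zero mean); then $\hat I_H v\equiv 0$, so $|v-I_Hv|_{H^1}=a/\eps$, while $|v|_{H^2}=\sqrt{2}\,a/\eps^2$ and $H=2\eps$, giving $|v-I_Hv|_{H^1}=\frac{1}{2\sqrt2}H|v|_{H^2}$ with $\frac{1}{2\sqrt2}>\frac{1}{2\sqrt3}$. You are in good company here: the paper's own proof, which uses \eqref{eq:discrete-poincare}, also only establishes $\frac{1}{\sqrt6}$, not the $\frac{1}{2\sqrt3}$ announced in the lemma, so the stated constant is an inconsistency of the paper itself. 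Your plan becomes a correct proof once you target $\frac{1}{\sqrt6}$ and invoke Lemma \ref{lem:discrete-poincare} for the local estimate; the weaker constant is immaterial for how the lemma is used, since only the $O(H)$ rate enters Theorem \ref{thm:num_hom}.
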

\begin{proof}
By noting that $\left|v-I_H v\right|_{H^1}^2=\left|v-\hat I_H v\right|_{H^1}^2$, the result follows from
\begin{align*}
\left|v-I_H v\right|_{H^1}^2
=~&
\epsilon \sum_{k=1}^{K} \sum\limits_{i=i_k}^{i_{k+1}-1} |D \left(v(X_i)-\hat I_H v(X_i)\right)|^2
\\ \le~&
\epsilon \sum_{k=1}^{K} \frac{H_k^2}{6} \sum\limits_{i=i_k}^{i_{k+1}-2} \left|D^2 \left(v(X_i)-\hat I_H v(X_i)\right)\right|^2
\\ \le~&
\epsilon \frac{H^2}{6} \sum_{k=1}^{K} \sum\limits_{i=i_k}^{i_{k+1}-1} \left|D^2 v(X_i)\right|^2
= \frac{H^2}{6} \left| v\right|_{H^2}^2,
\end{align*}
where we used the discrete Poincar\'e inequality \eqref{eq:discrete-poincare} in the first step (notice that $\sum\limits_{i=i_k}^{i_{k+1}-1}D \left( v(X_i)-\hat I_H v(X_i)\right) = 0$
from \eqref{equ:interpolant}), and the fact that $D^2\hat I_H v =0$ in the second step.
\end{proof}

\begin{proof}[Proof of Theorem \ref{thm:num_hom}]
The estimate \eqref{eq:analysis-of-methods:homogenized-convergence-H1} (convergence in the $H^1$-norm) follows from \eqref{equ:norm_equiv}, \eqref{equ:best-approximation}, Lemma \ref{lem:interpolation-error}, and \eqref{eq:emod-H1-estimate}.
To prove
\eqref{eq:analysis-of-methods:homogenized-convergence-L2} (convergence in the $L^2$-norm)
we use the standard duality arguments.
Consider
\[
\left<{\psi}^0 D {w}^0, D {v}\right> = \left<u^0-\tildeu^H,{v}\right>
\quad \forall {v}\in U_{\#}^{N}(\epsilon\mathbb Z),
\]
\[
\left<{\psi}^0 D {w}^H, D {v}^H\right> = \left<u^0-\tildeu^H,{v}^H\right>
\quad \forall {v}^H\in U^H_{\#}.
\]

Then
\begin{align*}
\|u^0-\tildeu^H\|_{L^2}^2
=~&
\left<u^0-\tildeu^H, u^0-\tildeu^H\right>_X
=
\left<{\psi}^0 D {w}^0, D \left(u^0-\tildeu^H\right)\right>_X
\\ =~&
\left<{\psi}^0 D \left(u^0-\tildeu^H\right), D {w}^0\right>_X
=
\left<{\psi}^0 D \left(u^0-\tildeu^H\right), D \left({w}^0-{w}^H\right)\right>
\\ \le~&
C_{\psi}\left|u^0-\tildeu^H\right|_{H^1} \left|{w}^0-{w}^H\right|_{H^1}
\le
C_{\psi} C_4 H \|{f}\|_{L^2} C_4 H \left\|u^0-\tildeu^H\right\|_{L^2},
\end{align*}
hence
\[
\|u^0-u^H\|_{L^2}
\le
\|u^0-\tildeu^H\|_{L^2}
+
\|\tildeu^H-u^H\|_{L^2}
\le
C_{\psi} C_4^2 H^2 \|{f}\|_{L^2}
+
\|e_{\rm mod}\|_{L^2}
.
\]
\end{proof}

\begin{proof}[Proof of Theorem \ref{thm:analysis-of-methods:homogenized-L2-convergence}]
Follows immediately from Theorems \ref{thm:apriory-L2} and \ref{thm:num_hom}.
\end{proof}

\begin{lemma}
\label{lem:analysis-of-methods:reconstructed-convergence-to-uc}
Under the assumptions of Theorem \ref{thm:analysis-of-methods:reconstructed-convergence}, let $u^{H,c}$ be the reconstruction
\eqref{eq:HQC:periodic-extension} and $u^c$ be the corrector defined in \eqref{equ:corrector}.
Then there exist constants $C_{13}$ and $C_{14}$ such that
\begin{align}
\label{eq:analysis-of-methods:reconstructed-convergence-to-uc-H1}
\left|\hat{u}^{H,c}-u^{H,c}\right|_{H^1} \le~& C_{13} H \|{f}\|_{H^{-1}},
\qquad\textnormal{and}
\\
\label{eq:analysis-of-methods:reconstructed-convergence-to-uc-L2}
\left\|\hat{u}^{H,c}-u^{H,c}\right\|_{L^2} \le~& C_{14}\,\eps H \|{f}\|_{H^{-1}},
\end{align}
where
\[
\hat u^{H,c}=u^H + \epsilon {\chi}(X_i, X_i/\eps) D u^H,
\]
and ${\chi}$ is defined as a solution to \eqref{equ:chi} (or \eqref{equ:chi_variat}).
\end{lemma}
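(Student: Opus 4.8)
The plan is to compare the two reconstructions
\[
\hat u^{H,c} = u^H + \epsilon\,\chi(X_i, X_i/\eps)\,D u^H
\qquad\text{and}\qquad
u^{H,c}\big|_{S_k} = u^H + \epsilon\,\chi(X_{i_k^{\rm coll}}, X_i/\eps)\,D u^H,
\]
where the second expression is exactly the form established in Lemma \ref{lem:equv_hmm_QC_corr}. The \emph{only} difference between them is the slow argument of the corrector $\chi$: the reconstruction $u^{H,c}$ collocates $\chi$ at the single point $X_{i_k^{\rm coll}}$ on each element $S_k$, whereas $\hat u^{H,c}$ evaluates $\chi$ at the running point $X_i$. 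Thus on each element
\[
\hat u^{H,c} - u^{H,c}
= \epsilon\,\big(\chi(X_i, X_i/\eps) - \chi(X_{i_k^{\rm coll}}, X_i/\eps)\big)\,D u^H,
\]
and $D u^H$ is constant on $S_k$. The whole argument reduces to controlling this difference of $\chi$-values in the slow variable, element by element, and then summing.

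First I would estimate the pointwise difference $\chi(X_i, X_i/\eps) - \chi(X_{i_k^{\rm coll}}, X_i/\eps)$ using the Lipschitz bound on $\chi$ in the slow variable supplied by Lemma \ref{lem:cell-problem}, namely $\|D_X \chi\|_{L^\infty(N,p)} \le p\,C_\psi'/c_\psi$. Since $|X_i - X_{i_k^{\rm coll}}| \le H_k \le H$ for $X_i \in S_k$, summing the discrete $X$-derivatives of $\chi$ over the at most $H_k/\epsilon$ intervening lattice points gives
\[
\big|\chi(X_i, X_i/\eps) - \chi(X_{i_k^{\rm coll}}, X_i/\eps)\big|
\le H_k\,\|D_X \chi\|_{L^\infty(N,p)}
\le H\,p\,\frac{C_\psi'}{c_\psi}.
\]
For the $L^2$ estimate \eqref{eq:analysis-of-methods:reconstructed-convergence-to-uc-L2} this bound feeds directly into $\|\hat u^{H,c} - u^{H,c}\|_{L^2} \le \epsilon\,H\,p\,(C_\psi'/c_\psi)\,\|D u^H\|_{L^2}$, and I would then invoke the stability estimate $|u^H|_{H^1} \le c_\psi^{-1}|f|_{H^{-1}}$ from Proposition \ref{prop:discretized_problem} to replace $\|D u^H\|_{L^2} = |u^H|_{H^1}$ by $\|f\|_{H^{-1}}$, absorbing everything into $C_{14}$.

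For the $H^1$ estimate \eqref{eq:analysis-of-methods:reconstructed-convergence-to-uc-H1} I would apply the total discrete derivative $D = D_X T_Y + \epsilon^{-1} D_Y$ to the difference. The term $\epsilon^{-1} D_Y$ cancels the explicit $\epsilon$ prefactor, producing a contribution of order $H\,\|D u^H\|_{L^2}$ controlled by $\|D_Y\chi\|$, while the $D_X T_Y$ term, carrying the surviving factor $\epsilon$, contributes at higher order because $D_X$ of the \emph{difference} of $\chi$-values is again bounded using the Lipschitz control from Lemma \ref{lem:cell-problem} together with the element width $H$. Collecting the dominant $O(H)$ contribution and again using Proposition \ref{prop:discretized_problem} to pass from $|u^H|_{H^1}$ to $\|f\|_{H^{-1}}$ yields the claimed bound with constant $C_{13}$.

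The main obstacle I anticipate is the bookkeeping in the $H^1$ estimate: because the difference of $\chi$-values is a function of both the slow index $i$ (through $X_i$) and the fast periodic variable, applying $D = D_X T_Y + \epsilon^{-1} D_Y$ requires care to separate the part where the $\epsilon^{-1}$ is neutralized by the explicit $\epsilon$ (giving the leading $O(H)$ term) from the part where it is not. One must verify that the piecewise-constant jumps of $D u^H$ and of the collocation point $X_{i_k^{\rm coll}}$ across element boundaries do not spoil the estimate — this is handled because $\hat u^{H,c} - u^{H,c}$ vanishes to the appropriate order at the element endpoints matching the periodic-extension construction \eqref{eq:HQC:periodic-extension}, so the discrete derivative picks up only the interior Lipschitz bound on $\chi$ and not spurious boundary contributions.
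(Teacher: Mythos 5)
Your decomposition and your $L^2$ estimate coincide exactly with the paper's proof: by Lemma \ref{lem:equv_hmm_QC_corr}, $\hat u^{H,c}-u^{H,c}=\epsilon(\chi-\chi_{\rm coll})\,Du^H$ with $\chi_{\rm coll}(X_j,Y_i)=\chi(X_{i_k^{\rm coll}},Y_i)$ on $S_k$; the Lipschitz bound \eqref{eq:DXchi-estimate} gives $\|\chi-\chi_{\rm coll}\|_{L^{\infty}}\le H\,p\,C'_{\psi}/c_{\psi}$; and \eqref{eq:discretized_problem_estimates} converts $\|Du^H\|_{L^2}$ into $c_{\psi}^{-1}\|f\|_{H^{-1}}$. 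Up to this point there is nothing to criticize.

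The $H^1$ estimate is where you depart from the paper, and your version has a genuine gap. The paper's argument is one line: since the $L^2$ bound carries an explicit factor $\epsilon$, the inverse discrete Poincar\'e inequality \eqref{eq:inverse-Poincare}, $\epsilon|v|_{W^{1,q}}\le 2\|v\|_{L^q}$, immediately yields $|\hat u^{H,c}-u^{H,c}|_{H^1}\le 2\epsilon^{-1}\|\hat u^{H,c}-u^{H,c}\|_{L^2}\le C_{13}H\|f\|_{H^{-1}}$; no differentiation of the product is needed at all. Your route --- applying $D=D_XT_Y+\epsilon^{-1}D_Y$ and estimating term by term --- could in principle be pushed through, but as written it fails at two points. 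First, the $\epsilon^{-1}D_Y$ term equals $(D_Y\chi-D_Y\chi_{\rm coll})\,Du^H$, and to make this $O(H)$ you need Lipschitz control of $D_Y\chi$ in the \emph{slow} variable, i.e.\ a bound on $\|D_XD_Y\chi\|_{L^{\infty}}$, not on ``$\|D_Y\chi\|$'' as you state; no such bound appears in Lemma \ref{lem:cell-problem} (it can be recovered from \eqref{eq:DXchi-estimate} by the inverse inequality in $Y$, since the fast lattice has unit spacing, but this step must be supplied). Second, your treatment of the element boundaries rests on a false claim: $\hat u^{H,c}-u^{H,c}$ does \emph{not} vanish at element endpoints --- in general $\chi(X_{i_k},\bullet)\ne\chi(X_{i_k^{\rm coll}},\bullet)$, $Du^H$ jumps there, and $D_X\chi_{\rm coll}$ has a jump of size $O(H/\epsilon)$ across the boundary. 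What saves the boundary nodes is not vanishing but smallness: the function is pointwise bounded by $\epsilon H\,p\,(C'_{\psi}/c_{\psi})|Du^H|$, so its difference quotient at any node, boundary or interior, is $O(H)$ times neighboring values of $|Du^H|$ --- and that observation is precisely the inverse-inequality argument, which, once invoked, makes the entire product-rule computation unnecessary.
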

\begin{proof}
Notice that
\[
\hat u^{H,c}-u^{H,c}
=
\epsilon(\chi - \chi_{\rm coll}) Du^H,
\]
where collocated $\chi$ is defined as $\chi_{\rm coll}(X_j, Y_i) = \chi(X_{i_k^{\rm coll}}, Y_i)$ for $X_j\in S_k$, and can be estimated using \eqref{eq:DXchi-estimate} as
\begin{equation}
|\chi(X_j, Y_i)-\chi_{\rm coll}(X_j, Y_i)| \le |X_{i_k^{\rm coll}}-X_j|\,\|D\chi\|_{L^{\infty}}
\le H\,p \frac{C'_{\psi}}{c_{\psi}}
.
\label{eq:analysis-of-methods:reconstructed-convergence-to-uc-L2_intermediate}
\end{equation}
Then, using \eqref{eq:analysis-of-methods:reconstructed-convergence-to-uc-L2_intermediate} and \eqref{eq:discretized_problem_estimates}
we obtain \eqref{eq:analysis-of-methods:reconstructed-convergence-to-uc-L2}:
\[
\|\hat u^{H,c}-u^{H,c}\|_{L^2} \le \epsilon \|\chi - \chi_{\rm coll}\|_{L^{\infty}} \|Du^H\|_{L^2}
\le
\epsilon\,H p \frac{C'_{\psi}}{c_{\psi}}\,c_{\psi}^{-1} \|f\|_{H^{-1}}
,
\]
from where \eqref{eq:analysis-of-methods:reconstructed-convergence-to-uc-H1} follows directly by applying the inverse discrete Poincar\'e inequality \eqref{eq:inverse-Poincare}.
\end{proof}

\begin{lemma}
\label{lem:analysis-of-methods:reconstructed-convergence-to-hat_uc}
Under the assumptions of Theorem \ref{thm:analysis-of-methods:reconstructed-convergence}, let $u^c$ be the corrector defined in \eqref{equ:corrector}.
Then there exist constants $C_{15}$ and $C_{16}$ such that
\begin{align*}
\left|\hat u^{H,c}-u^{\rm c}\right|_{H^1}\le\,& C_{15} H \|{f}\|_{L^2}
,
\qquad\textnormal{and}
\\
\left\|\hat u^{H,c}-u^{\rm c}\right\|_{L^2}\le\,& C_{16} H^2 \|{f}\|_{L^2},
\end{align*}
\end{lemma}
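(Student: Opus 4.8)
The plan is to exploit that $\hat u^{H,c}$ and $u^{\rm c}$ are produced by the \emph{same} corrector operation applied to two different macroscopic fields. Subtracting \eqref{equ:corrector} from the formula for $\hat u^{H,c}$ stated in Lemma \ref{lem:analysis-of-methods:reconstructed-convergence-to-uc}, and abbreviating $e:=u^H-u^0$ (note $D u^H - D_X u^0 = De$ since both live on the single lattice $\eps\mathbb Z$), one obtains the clean identity
\[
\hat u^{H,c}-u^{\rm c} = e + \epsilon\,\chi(X_i,X_i/\eps)\,D e.
\]
Both asserted bounds then reduce to controlling (i) the discretization error $e$, which is exactly what Theorem \ref{thm:num_hom} supplies, and (ii) the oscillatory corrector term $\epsilon\chi De$, for which the only structural input I need is the uniform bound $\|\chi\|_{L^\infty(N)}\le \frac{p}{2}\frac{C_\psi}{c_\psi}$ from Lemma \ref{lem:cell-problem}(a). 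I would also record at the outset the elementary inequality $H\ge\epsilon$ (each $H_k=\epsilon(i_{k+1}-i_k)\ge\epsilon$), which later lets me trade one factor of $\epsilon$ for $H$.

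For the $H^1$-seminorm I would split $|\hat u^{H,c}-u^{\rm c}|_{H^1}\le|e|_{H^1}+\epsilon\,|\chi De|_{H^1}$ and estimate the first summand directly by \eqref{eq:analysis-of-methods:homogenized-convergence-H1} as $C_4 H\|f\|_{L^2}$. For the second summand the idea is to route the estimate through the inverse discrete Poincar\'e inequality \eqref{eq:inverse-Poincare} applied to the discrete function $\chi De$ (its $H^1$ seminorm is unchanged by subtracting the mean, so the inequality applies), giving $|\chi De|_{H^1}\le C\epsilon^{-1}\|\chi De\|_{L^2}$. The prefactor $\epsilon$ is thereby absorbed and
\[
\epsilon\,|\chi De|_{H^1}\le C\|\chi De\|_{L^2}\le C\|\chi\|_{L^\infty(N)}\,|e|_{H^1}\le C\,\tfrac{p}{2}\tfrac{C_\psi}{c_\psi}\,C_4\,H\|f\|_{L^2},
\]
which summed with the first summand yields the claimed $H^1$ bound for a suitable $C_{15}$.

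For the $L^2$-norm I would use $\|\hat u^{H,c}-u^{\rm c}\|_{L^2}\le\|e\|_{L^2}+\epsilon\|\chi\|_{L^\infty(N)}\,|e|_{H^1}$. The first term is $\le C_5 H^2\|f\|_{L^2}+\|e_{\rm mod}\|_{L^2}$ by \eqref{eq:analysis-of-methods:homogenized-convergence-L2}, while the second is $\le \epsilon\,\tfrac{p}{2}\tfrac{C_\psi}{c_\psi}C_4 H\|f\|_{L^2}\le C H^2\|f\|_{L^2}$ after invoking $\epsilon\le H$; collecting these gives the $L^2$ estimate (which, being inherited from Theorem \ref{thm:num_hom}, honestly also carries the term $\|e_{\rm mod}\|_{L^2}$ alongside $C_{16}H^2\|f\|_{L^2}$). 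The one nonroutine point, and the main obstacle, is the $H^1$ control of the corrector term: a naive discrete Leibniz expansion of $D(\chi De)$ produces a factor $D^2(u^H-u^0)$, and since $u^H\in U^H_{\#}$ is only piecewise affine its second difference concentrates at the macro-nodes and is not bounded in $L^2$ uniformly in $\epsilon$. Handling the corrector term via the inverse Poincar\'e inequality rather than the product rule is precisely what keeps it at order $H$ and makes the argument go through.
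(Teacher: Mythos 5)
Your proof is correct and follows essentially the same route as the paper: the identical decomposition $\hat u^{H,c}-u^{\rm c} = \big(u^H-u^0\big) + \epsilon\chi\,D\big(u^H-u^0\big)$, the $L^\infty$ bound on $\chi$ from Lemma \ref{lem:cell-problem}(a), the inverse discrete Poincar\'e inequality \eqref{eq:inverse-Poincare} to absorb the factor $\epsilon$ in the $H^1$ estimate, and Theorem \ref{thm:num_hom} to conclude. Your parenthetical remark that the $L^2$ bound honestly inherits the $\|e_{\rm mod}\|_{L^2}$ term is well taken: the paper's own proof, which bounds the corrector term by $p\tfrac{C_{\psi}}{c_{\psi}}\big\|u^H-u^0\big\|_{L^2}$ via inverse Poincar\'e rather than by $\epsilon\le H$ times the $H^1$ error as you do, carries that same term even though the lemma statement omits it.
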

\begin{proof}
We express
\[
\hat u^{H,c}-u^{\rm c}
=
\big[u^H - u^0\big] +
\big[\epsilon {\chi} D \left(u^H - u^0\right)\big],
\]
and estimate the second term of the right-hand as
\begin{eqnarray*}
\left|\epsilon {\chi} D \left(u^H - u^0\right)\right|_{H^1}&\leq&
p \frac{C_{\psi}}{c_{\psi}} \left|u^H - u^0\right|_{H^1}
\\
\left|\epsilon {\chi} D \left(u^H - u^0\right)\right|_{L^2}&\leq&
p \frac{C_{\psi}}{c_{\psi}} \left\|u^H - u^0\right\|_{L^2}.
\end{eqnarray*}
using the inverse discrete Poincar\'e inequality \eqref{eq:inverse-Poincare}
and the estimate \eqref{eq:Xchi-estimate}.
The result follows then from Theorem \ref{thm:num_hom}.
\end{proof}

\begin{proof}[Proof of Theorem \ref{thm:analysis-of-methods:reconstructed-convergence}]
The inequalities \eqref{eq:analysis-of-methods:reconstructed-convergence-H1}
and \eqref{eq:analysis-of-methods:reconstructed-convergence-L2} follow from Lemmas
\ref{lem:analysis-of-methods:reconstructed-convergence-to-uc}, \ref{lem:analysis-of-methods:reconstructed-convergence-to-hat_uc}, and \ref{lem:Hminus1-through-L2}, the fact that $\epsilon H \le H^2$, and Theorem \ref{thm:modeling_error}.
\end{proof}

\section{Example of Application to a 2D Lattice}\label{sec:2d}

In this section, we consider a simple 2D model to illustrate how the proposed approach can be applied to 2D materials.

\subsection{Notations}
All the coordinates and atom indices will be vectors with two components, for instance $X_i = (X_{i,1}, X_{i,2})$, $i=(i_1, i_2)$.
The unit vectors in our 2D space will be denoted as $e_1 = (1,0)$ and $e_2 = (0,1)$.
The length of a 2D vector $v$ is denoted as $|v| = \left(v_1^2+v_2^2\right)^{1/2}$, the scalar product of two vectors $v$ and $w$ is denoted as $v\cdot w = v_1 w_1 + v_2 w_2$.

We define the inequalities for 2D vectors in the following way: $u<v$ if, by definition, $u_1<v_1$ and $u_2<v_2$ (likewise for relations $>$, $\le$, and $\ge$).
Thus, $(1,1)\le i\le N$ means $1\le i_1\le N_1$ and $1\le i_2\le N_2$.
We will also use the associated notations for the sums, for instance $\sum\limits_{i=(1,1)}^N\bullet$\ .

\subsection{Equations of Equilibrium}

Consider a square lattice with the reference configuration of atoms given by
\[
X_i = \epsilon i
\quad ((1,1)\le i\le N).
\]
The position of the atoms $x_i$ and displacements $u_i$ are related through $x_i = X_i + u_i$.
We consider the system with $N$-periodic ($N=(N_1, N_2)$) conditions $u_{i+N} = u_i$.
The space of such $N$-periodic vector-valued sequences is denoted as $U_\per^N$.

Consider the following operators on $U_\per^N$:
\[
D_{\alpha} u_i = \frac{u_{i+e_{\alpha}}-u_i}{\epsilon}
\quad(\alpha=1,2)
, \quad
D_{r} u_i = \frac{u_{i+r}-u_i}{\epsilon |r|}
,
\]
the averaging in $i$:
\[
\left<\vecu \right>_i = \frac{1}{N_1 N_2} \sum\limits_{i=(1,1)}^N u_i
\]
and the scalar product
\[
\left<\vecu ,\vec{v}\right>_i = \left<\vecu \cdot\vec{v}\right>_i = \frac{1}{N_1 N_2} \sum\limits_{i=(1,1)}^N u_i\cdot v_i.
\]

Consider the linear interaction of atoms defined by a set of neighbors ${\mathcal R}$ so that the functional derivative of the interaction energy is
\[
\Eint'(u;v) = \sum\limits_{r\in {\mathcal R}}\left<\vec{\psi}_r D_{r} \vecu , D_{r} \vec{v}\right>_i,
\]
where $\psi_{r,i}$ defines interaction between atoms $i$ and $i+r$.
Such linear interaction corresponds to a spring model with zero equilibrium length (cf.\ \cite{FrieseckeTheil2002} for the discussion on the nonlinear model with ideal springs but with nonzero spring equilibrium length).
The derivative of the external potential energy is
\[
\Eext'(\vec{v}) = -\left<\vec{f}, \vec{v}\right>_i.
\]
Thus, the equilibrium equation has the form
\begin{equation}
\sum\limits_{r\in {\mathcal R}}\left<\vec{\psi}_r D_{r} \vecu , D_{r} \vec{v}\right>_i
=
\left<\vec{f}, \vec{v}\right>_i
\quad \forall \vec{v}\in U_\per^N.
\label{eq:2d:variational}
\end{equation}

\subsection{Homogenization}

Homogenization of equations \eqref{eq:2d:variational} follows Section \ref{sec:atm-hmg}.
By analogy with the 1D case, we will use the term ``vector-valued function'' (or, in short, ``function'') for $u=u(X_i, Y_j)$ rather than the term ``vector field''.

\subsubsection{Fast and Slow Variables}
We first define the fast variable $Y_i = X_i/\epsilon$, the differentiation operators
\[
\begin{array}{c@{~}c@{~}c@{\qquad}c@{~}c@{~}c}
	D_{X,r} v(X_i,Y_j) & = & \displaystyle
	\frac{v(X_{i+r},Y_j)-v(X_i,Y_j)}{\left|X_{i+r}-X_i\right|},
	& D_{X_{\alpha}} & = & D_{X, e_{\alpha}}
\\[1em] \displaystyle
D_{Y,r} v(X_i,Y_j) & = & \displaystyle
	\frac{v(X_i,Y_{j+r})-v(X_i,Y_j)}{\left|Y_{i+r}-X_i\right|},
	& D_{Y_{\alpha}} & = & D_{Y, e_{\alpha}},
\end{array}
\]
where $r\in{\mathbb Z}^2$, $r\ne0$, $\alpha=1,2$,
the translation operators in $Y$:
\[
T_{Y,r} v(X_i,Y_j) = v(X_i,Y_{j+r}),
\quad
T_{Y_{\alpha}} v(X_i,Y_j) = v(X_i,Y_{j+e_{\alpha}}),
\]
and averaging and scalar products:
\[
\begin{array}{r@{~}c@{~}l@{\qquad}r@{~}c@{~}l}
\left<u\right>_X &=& \frac{1}{N_1 N_2} \sum\limits_{i=(1,1)}^N u(X_i, Y_j),
&
\left<u,v\right>_X &=& \left<u\cdot v\right>_X, 
\\[1em]
\left<u\right>_Y &=& \frac{1}{p_1 p_2} \sum\limits_{j=(1,1)}^p u(X_i, Y_j),
&
\left<u,v\right>_Y &=& \left<u\cdot v\right>_Y, 
\\[1em]
\left<u\right>_{XY} &=& \left<\left<u\right>_Y\right>_X,
&
\left<u,v\right>_{XY} &=& \left<u \cdot v\right>_{XY}.
\end{array}
\]
The space of $p$-periodic functions ($p=(p_1,p_2)$) is denoted as $U_\per^p$ and hence the functions of $X$ and $Y$ belong to $U_\per^N\otimes U_\per^p$.

\subsubsection{Solution Representation}
Assume, as before, $u_i = u(X_i, Y_i)$ and $\psi_{r,i} = \psi_r(X_i, Y_i)$.
Then $D_{r} = D_{X,r} T_{Y,r} + \epsilon^{-1} D_{Y,r}$ for such functions $u$ and $\psi_r$.
Same as in 1D case, use the following $O(\epsilon)$ identity:
\[
D_{r} \simeq \frac{r_1}{|r|} D_{X_1} + \frac{r_2}{|r|} D_{X_2} + \epsilon^{-1} D_{Y,r}
= \sum\limits_{\alpha=1}^2 \frac{r_{\alpha}}{|r|} D_{X_{\alpha}} + \epsilon^{-1} D_{Y,r}
.
\]
Then the equation \eqref{eq:2d:variational} takes the form
\begin{equation}
\begin{array}{r} \displaystyle
\sum\limits_{r\in {\mathcal R}}\left<\psi_r \Big(\sum\limits_{\alpha=1}^2 \frac{r_{\alpha}}{|r|} D_{X_{\alpha}} + \epsilon^{-1} D_{Y,r}\Big) u, \Big(\sum\limits_{\alpha=1}^2 \frac{r_{\alpha}}{|r|} D_{X_{\alpha}} + \epsilon^{-1} D_{Y,r}\Big) v\right>_{XY}
=
\left<f, v\right>_{XY}
\qquad \\ \displaystyle
\forall v\in U_\per^N\otimes U_\per^p.
\end{array}
\label{eq:2d:variational-fast-and-slow}
\end{equation}
We substitute $u(X_i, Y_j) = u^0(X_i, Y_j) + \epsilon u^1(X_i, Y_j) + \epsilon^2 u^2(X_i, Y_j) + O(\epsilon^3)$ into \eqref{eq:2d:variational-fast-and-slow} and collect the respective powers of $\epsilon$.
As before, by collecting the $O(\epsilon^{-2})$ and the $O(\epsilon^{-1})$ terms we obtain that $u^0(X_i,Y_j) = u^0(X_i)$ and $u^1(X_i,Y_j) = \sum\limits_{\alpha=1}^2 \chi_{\alpha}(X_i; Y_j) D_{X_{\alpha}} u^0(X_i) + \baru^1(X_i)$, where the matrix-valued functions $\chi_1$ and $\chi_2$ are defined as solutions to
\[
\sum_{r=1}^{R} \left<\psi_{r} D_{Y,r} \chi_{\alpha} e_{\beta}, D_{Y,r} s\right>_Y = -\sum_{r=1}^{R} \left<\psi_{r} \frac{r_{\alpha}}{|r|} e_{\beta}, D_{Y,r} s\right>_Y
\quad \forall s\in U_\per^p, \ \beta=1,2
.
\]
Collecting the $O(\epsilon^0)$ terms yields
\[
\sum\limits_{\alpha=1}^2 \sum\limits_{\beta=1}^2 \left<\psi^0_{\alpha\beta} D_{X,\alpha} u^0, D_{X,\beta} v\right>_X
= \sum\limits_{\beta=1}^2 \left<f, v_{\beta}\right>_X,
\]
where the homogenized tensors $\psi^0_{\alpha\beta}$ are defined as
\[
\psi^0_{\alpha\beta} = \sum_{r=1}^{R} \left<\psi \left(\frac{r_{\alpha}}{|r|} I + D_{Y,r}\right) \chi_{\alpha} e_{\beta}\right>_Y,
\]
with $I$ denoting a $2\times 2$ identity matrix.
The homogenized tensors $\psi^0_{\alpha\beta}$ are related to the fourth-order stiffness tensor in linear elasticity theory \cite{Abdulle2006, OleinikShamaevYosifian1992, Sands2002}.

\subsubsection{Example of Application of Homogenization}\label{sec:2d-example}

To illustrate how the 2D discrete homogenization works, we apply it to the following model problem.
The set of neighbors is defined by ${\mathcal R} = \left\{(1,0), (1,1), (0,1), (-1,1)\right\}$ (we omit the neighbors that can be obtained by reflection around $(0,0)$) and the interaction tensor as
\[
\psi_{(1,1),i} = \psi_{(1,-1),i} = k_3
,\quad
\psi_{(1,0),i} = \psi_{(0,1),i} = \left\{
	\begin{array}{lcl}
	k_1 & & i_1 + i_2 \textnormal{ is even} \\
	k_2 & & i_1 + i_2 \textnormal{ is odd.}
	\end{array}
\right.
\]
Such material is illustrated in Fig.\ \ref{fig:2d-springs}.

This example was motivated by the study of Friesecke and Theil \cite{FrieseckeTheil2002}, where a similar model was considered.
Friesecke and Theil considered the model with springs similar to the one illustrated in Fig.\ \ref{fig:2d-springs}, which however was nonlinear due to nonzero equilibrium distances of the springs (so that the energy of the spring between masses $x_i$ and $x_j$ is proportional to $|x_i-x_j|^2-l_0^2$, where $l_0$ is the equilibrium distance).
They found that with certain values of parameters the lattice looses stability to non-Cauchy-Born disturbances and the lattice period doubles (thus the lattice ceases to be a Bravais lattice).

The results, given with no details of actual derivation, are the following:
The period of spatial oscillations in this case is $(2,2)$.
The function $\chi$ has the form $\chi = \chi(Y_j) = (-1)^{j_1+j_2} \frac{k_1-k_2}{4 (k_1+k_2)} I$
(here $I$ is the $2\times 2$ identity matrix).
The homogenized tensors have the form
\[
\psi^0_{11} = \psi^0_{22} =
\left(k_1+k_2 + \frac{4 k_1 k_2}{k_1+k_2} + 8 k_3\right) I
,
\quad
\psi^0_{12} = \psi^0_{21} =
-\frac{(k_1-k_2)^2}{k_1+k_2} I.
\]

\subsection{HQC}

In this subsection we sketch a formulation the HQC method based on the discrete triangular elements.
Namely, we choose a partition ${\mathcal T}$ with triangles $S_k\in{\mathcal T}$ of the original atomistic domain ${\mathcal X} = \{i:\ (1,1)\le i\le N\}$.
The space of piecewise affine deformations is denoted as $U_\per^H$.

Inside each triangle $S_k$ choose a sampling rectangle $S_{k}^{\rm rep} \subset S_k$ of the size $p_1\times p_2$ atoms.
Define the HQC energy variation:
\[
(E^{\rm HQC})'(u^H, v^H)
=
\sum\limits_{S_k\in{\mathcal T}} |S_k| \sum_{r=1}^{R}
\left<\psi_r D_r \calR_k(u^H), D_r \calR_k(v^H)\right>_{X_i\in S_{k}^{\rm rep}},
\]
where $|S_k|$ is the area of the triangle, and the microfunction $\calR_k\left({w}^H\right)$ is defined on $S_{k}^{\rm rep}$ so that $\calR_k\left({w}^H\right)-u^H\in U_{\#}^{p}(\eps\mathbb Z^2)$
and
\[
\sum_{r=1}^{R}
\left<\psi_{r} D_{r} \calR_{k}\left({w}^H\right),~ D_{r}s\right>_{X_i\in S_{k}^{\rm rep}}
=0
\quad \forall {s}\in U_{\#}^{p}(\eps{\mathbb Z}^2)
.
\]
As before, the function $\calR_k\left({w}^H\right)$ can be extended on the whole triangle $S_k$ if required.

The variational problem to be solved thus becomes
\[
(E^{\rm HQC})'(u^H, v^H)
=
F\left({v}^H\right),\quad\forall v^H\in U_{\#}^H,
\]
where
\[
F\left({v}^H\right) =
\sum\limits_{S_k\in{\mathcal T}} |S_k| \left<f, {v}^H\right>_{X_i\in S_{k}^{\rm rep}}.
\]

\section{Numerical Examples}\label{sec:numeric}

We solve numerically several model problems to illustrate the performance of HQC.
We consider the linear and the nonlinear 1D model problems (Sections \ref{sec:numeric:1d-linear} and \ref{sec:numeric:1d-nonlinear}), followed by the two 2D linear model problems (Sections \ref{sec:numeric:2d-first} and \ref{sec:numeric:2d-first}).
We also study dependence of the numerical error on $p$, the spatial period of heterogeneity of the material (Section \ref{sec:numeric:different-p}).

The aim of the numerical experiments is twofold.
First, we verify numerically the sharpness of the obtained error for the 1D linear case.
Second, we investigate whether the HQC convergence results obtained for the nearest neighbor linear interaction in 1D are valid for more general cases.
The numerical results show that all theoretical conclusions made in Section \ref{sec:HQC-convergence} are also valid for finite range nonlinear interaction or for 2D problems.

\subsection{1D Linear}\label{sec:numeric:1d-linear}

In the first numerical example we solve the problem \eqref{eq:variational_problem_nonlinear} for the linear interaction case with the period of spatial oscillation $p=2$ and number of interacting neighbors $R=3$.
The potential is defined as
\[
\varphi_{i,i+r}(z) = \frac{1}{2} k_{i,i+r} 3^{1-r} (z-r)^2
\quad (1\le r\le R)
,
\]
where
\begin{equation}
k_{i,i+r} = \left\{
	\begin{array}{lcl}
	1 & & \textnormal{$i$ is even} \\
	2 & & \textnormal{$i$ is odd}
	\end{array}
\right.
\label{eq:numeric:1d-linear:k}
\end{equation}
Such potential is periodic, hence, as suggested by Theorem \ref{thm:modeling_error}, $e_{\rm mod}=0$.
The number of atoms was $N=2^{14}=16384$, and the external force was taken as
\[
f_i = \sin\left(1+2\pi X_i\right).
\]

\begin{figure}
\begin{center}
\hfill
	\includegraphics{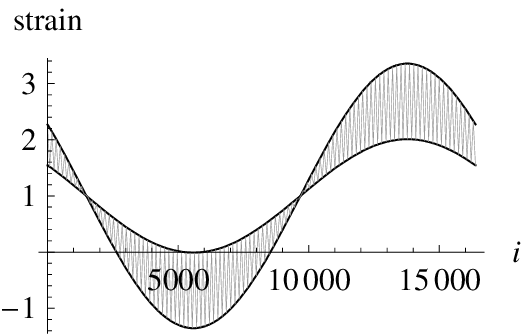}
\hfill\hfill
	\includegraphics{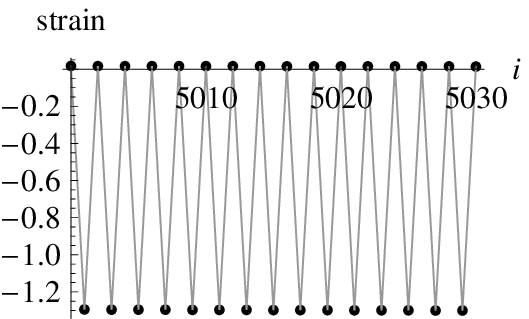}
\hfill $\mathstrut$
\end{center}
\caption{Strain $D u_i$ of the solution of the 1D linear problem: the schematically shown complete solution (left) and the closeup of the micro-structure for 31 atoms (right).}
\label{fig:solution-linear}
\end{figure}

The strain $D u_i$ for such problem is shown in Fig.\ \ref{fig:solution-linear}.

\begin{figure}
\begin{center}
\includegraphics{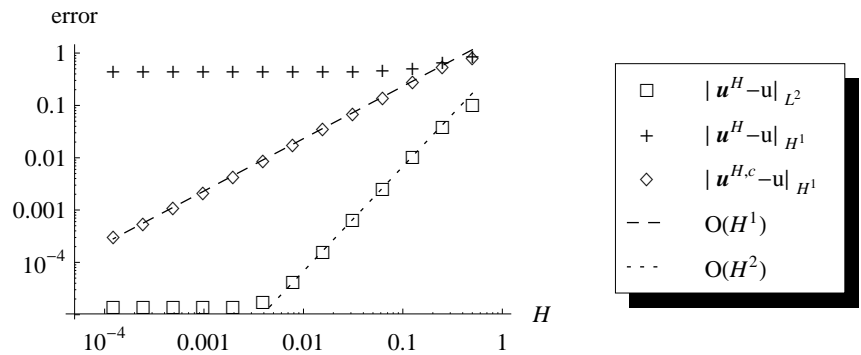}
\end{center}
\caption{Results for the 1D linear problem: errors of the post-processed HQC solution $u^{H,c}$ and the homogenized solution $u^H$ in different norms.
The errors behave in accordance with Theorems \ref{thm:analysis-of-methods:homogenized-L2-convergence} and \ref{thm:analysis-of-methods:reconstructed-convergence}.}
\label{fig:error1-linear}
\end{figure}

\begin{figure}
\begin{center}
\includegraphics{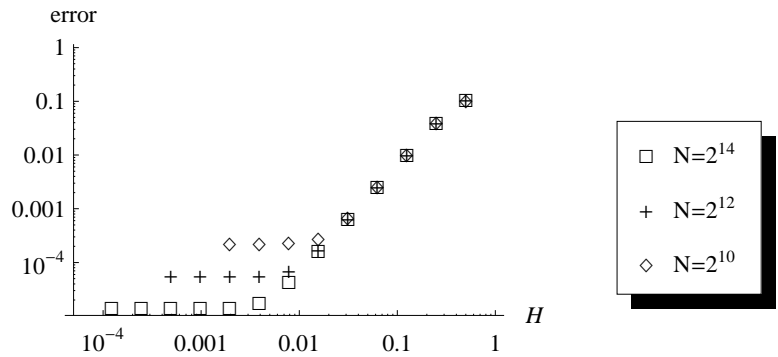}
\end{center}
\caption{Results for the 1D linear problem: errors $\|u^H-u\|_{L^2}$ for $N=2^{14}$, $N=2^{12}$, and $N=2^{10}$.
We can see that the plateau for small $H$ follows the $O(\epsilon)=O(N^{-1})$ behavior, as predicted by Theorem \ref{thm:analysis-of-methods:homogenized-L2-convergence}.}
\label{fig:error1-linear-differentN}
\end{figure}

Figure \ref{fig:error1-linear} is aimed to illustrate theorems \ref{thm:analysis-of-methods:homogenized-L2-convergence} and \ref{thm:analysis-of-methods:reconstructed-convergence}.
It can be seen that the homogenized HQC solution converges to the exact solution in the $L^2$-norm, does not converge in the $H^1$-norm, but the post-processed HQC solution does converge in the $H^1$-norm.
The convergence of the homogenized solution $u^H$ in the $L^2$-norm is exactly as suggested by Theorem \ref{thm:analysis-of-methods:homogenized-L2-convergence}: first it decreases with the second order as $H$ is refined, and later it stays constant as $H$ is refined further.
The $O(\epsilon)$ behavior of the lower bound of $\|u^H-u\|_{L^2}$ is illustrated in Fig.\ \ref{fig:error1-linear-differentN}.

\begin{figure}
\begin{center}
\includegraphics{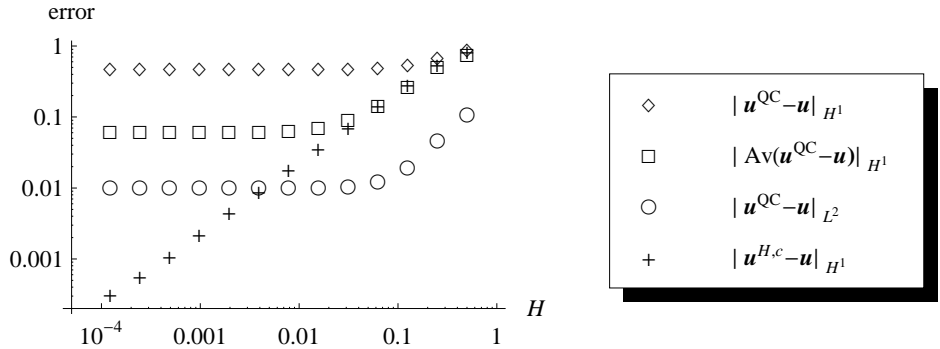}
\end{center}
\caption{Results for the 1D linear problem: errors of the post-processed HQC solution $u^{H,c}$ and solution by the naive QC method $u^{\rm QC}$.
Here $u$ is the exact solution, ${\rm Av}$ is the averaging operator defined as ${\rm Av}(u)_i = \frac{u_i+u_{i+1}}{2}$.
The graph illustrates that the naive of QC to a complex material fails, while the HQC successfully convergence to the exact atomistic solution.}
\label{fig:error2-linear}
\end{figure}

The errors of the post-processed HQC solution $u^{H,c}$ and the solution by the naive QC method $u^{\rm QC}$ are shown in fig.\ \ref{fig:error2-linear}.
It can be seen that only the solution by HQC converges to the exact solution $u$, but the solution with the naive QC method does not converge, even when compared to the averaged exact solution (averaging operator is ${\rm Av}(u)_i = \frac{u_i+u_{i+1}}{2}$) or computed in an $L^2$-norm.
These findings are similar to calculations of Tadmor et al \cite{TadmorSmithBernsteinEtAl1999} which show that assuming a linear interpolation for a silicon crystal greatly overestimates its strain energy density.

\subsection{1D Nonlinear}\label{sec:numeric:1d-nonlinear}

We solve the problem \eqref{eq:variational_problem_nonlinear} for a general nonlinear interaction case, with the period of spatial oscillation $p=2$, number of interacting neighbors $R=3$, and number of atoms $N=2^{14}=16384$.
We chose Lennard-Jones potential
\[
\varphi_{i,i+r}(z) = -2 \left(\frac{z}{l_{i,i+r}}\right)^{-6} + \left(\frac{z}{l_{i,i+r}}\right)^{-12}
\quad (1\le r\le R)
\]
with the varying equilibrium distance
\[
l_{i,i+r} = \left\{
	\begin{array}{lcl}
	1 & & \textnormal{$i$ is even} \\
	9/8 & & \textnormal{$i$ is odd.}
	\end{array}
\right.
\]
The external force was taken as
\[
f_i = 50 \sin\left(1+2\pi X_i\right).
\]

\begin{figure}
\begin{center}
\hfill
	\includegraphics{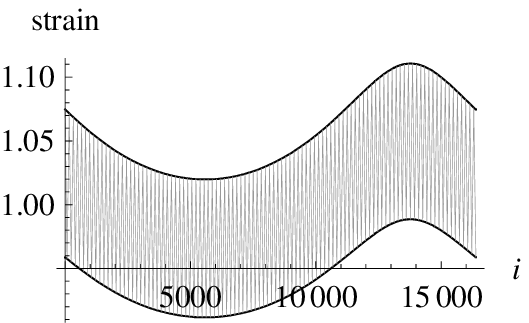}
\hfill\hfill
	\includegraphics{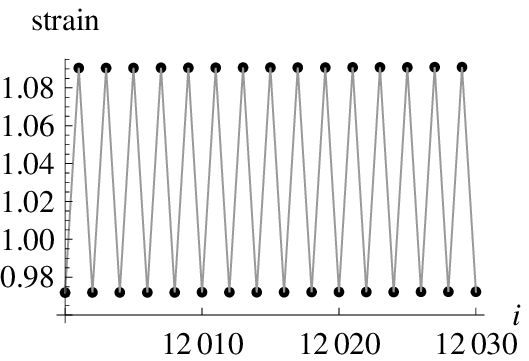}
\hfill $\mathstrut$
\end{center}
\caption{Strain $D u_i$ of the solution of the 1D nonlinear problem: the schematically shown complete solution (left) and the closeup of the micro-structure for 31 atoms (right).}
\label{fig:solution-nonlinear}
\end{figure}

The strain $D u_i$ for such problem is shown in Fig.\ \ref{fig:solution-nonlinear}.

\begin{figure}
\begin{center}
\includegraphics{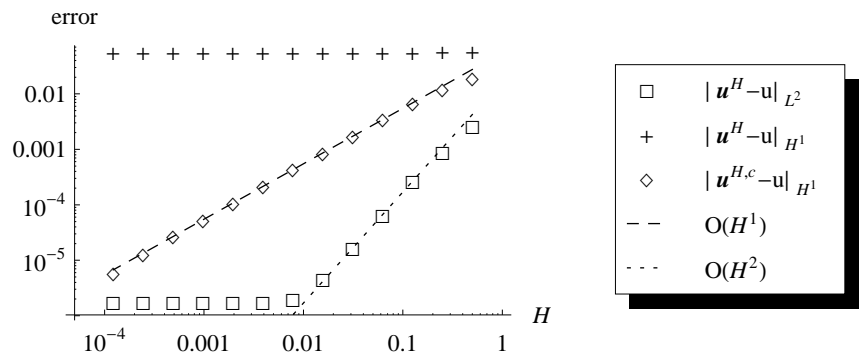}
\end{center}
\caption{Results for the 1D nonlinear problem: errors of the post-processed HQC solution $u^{H,c}$ and the homogenized solution $u^H$ in different norms.
The errors behave in accordance with Theorems \ref{thm:analysis-of-methods:homogenized-L2-convergence} and \ref{thm:analysis-of-methods:reconstructed-convergence}.}
\label{fig:error1-nonlinear}
\end{figure}

Figure \ref{fig:error1-nonlinear} plots the errors of HQC in $H^1$- and $L^2$-norms.
The results observed are qualitatively the same as the results on Fig.\ \ref{fig:error2-linear} obtained for the linear case.
Also, the results of a naive application QC method to the nonlinear problem will be the same as shown on Fig.\ \ref{fig:error2-linear} for the linear case.
Thus, we conclude that the convergence estimates derived for the linear case are also valid for the nonlinear case, as the conducted numerical experiments show.

\subsection{Convergence for Different Periods $p$}\label{sec:numeric:different-p}

\begin{table}
\begin{center}
\begin{tabular}{|c|cccc|}
\hline
& $p=2$ & $p=4$ & $p=8$ & $p=16$ \\ \hline
1st test case (linear) & $0.040$ & $0.043$ & $0.044$ & $0.042$ \\
2nd test case (nonlinear) & $0.019$ & $0.018$ & $0.015$ & $0.017$ \\
\hline
\end{tabular}
\end{center}
	\caption{Dependence of the bound $C_8=\max\limits_H \frac{|u^{H,c}-u|_{H^1}}{H}$ (cf.\ Theorem \ref{thm:analysis-of-methods:reconstructed-convergence}) on $p$.
	It can be seen that $C_8$ essentially does not depend on $p$.}
	\label{tab:different-p}
\end{table}

In our analysis of the equations and the computational method, we kept the dependence on $p$ implicit, because derivation of estimates which are sharp w.r.t.\ $p$ is much more technical.
In this section we numerically address this issue.

The first test case is similar to the one in Section \ref{sec:numeric:1d-linear}.
We fixed the bounds for $k_{i,i+r}$ in \eqref{eq:numeric:1d-linear:k} between $1$ and $2$ and randomly generated the values of $k_{i,i+r}$ with the periods $p=2,4,8,16$.
We estimate the constant $C_8$ in Theorem \ref{thm:analysis-of-methods:reconstructed-convergence} as
\[
C_8 = \max\limits_{H} \frac{|u^{H,c}-u|_{H^1}}{H},
\]
where the maximum is taken for $H=2^{-1}, 2^{-2}, \ldots, 2^{-7}$.
The second test case is similar to the one in Section \ref{sec:numeric:1d-nonlinear} with $l_{i,i+r}$ randomly generated between $1$ and $\frac{11}{10}$.

The results for both test cases are shown in Table \ref{tab:different-p}.
It can be seen that the constant $C_8$ essentially does not depend on $p$.

This finding is important in applications, for instance, to shape memory alloys that may change their crystalline structure in the course of loading/unloading.
Motivated by such applications, the authors of \cite{DobsonElliottLuskinEtAl2007} designed the adaptive strategy of choosing $p$, called Cascading Cauchy-Born kinematics, for the complex lattice QC method.
They also presented an example of application of their method to the 1D model problem exhibiting period-doubling bifurcations.
The present findings indicate that increase of period $p$ does not affect the accuracy of the method.

Independence of error bounds on $p$ is also important for modeling amorphous materials, such as glasses or polymers.
Amorphous materials do not have a spatial period, instead they exhibit some random structure.
By analogy with application of numerical homogenization to PDEs with random tensors, one could take $p$ large enough to capture the variation of the microscopic structure of the amorphous material, and expect that it will not affect the accuracy of representation of the macroscopic deformation as the mesh size $H$ is refined.

\subsection{2D Test Case 1}\label{sec:numeric:2d-first}

\begin{figure}
\begin{center}
\hfill
\includegraphics{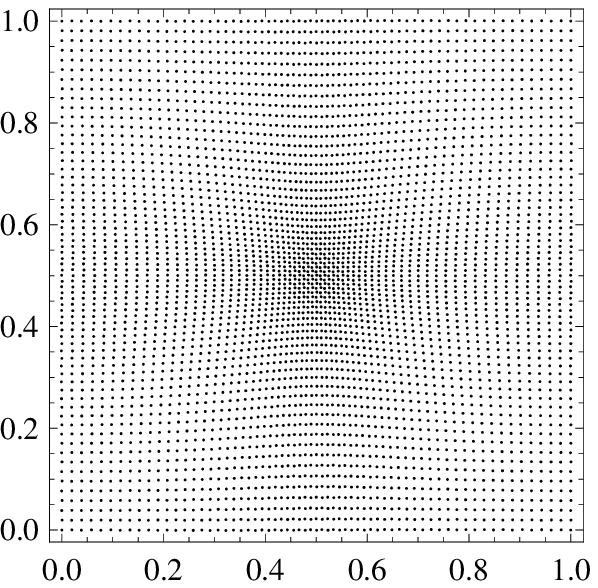}
\hfill
\includegraphics{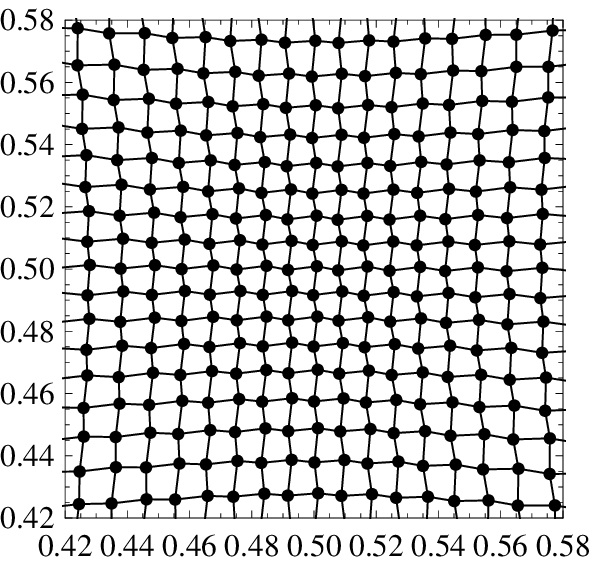}
\hfill
\end{center}
\caption{Atomic equilibrium configuration for $N_1=N_2=64$ for the 2D test case 1.
	Deformation of the whole material (left) and a close-up (right).}
\label{fig:2d-solution}
\end{figure}

We consider the example of material discussed in Subsection \ref{sec:2d-example}, with $\epsilon=2^{-11}$, $N_1=N_2=2^{11}$, $k_1=1$, $k_2=2$, $k_3=0.25$,
\[
f_i = 10 e^{-\cos(\pi i_1/N_1)^2-\cos(\pi i_2/N_2)^2} \left(\begin{array}{c} \sin(2\pi i_1/N_1) \\ \sin(2\pi i_2/N_2) \end{array}\right)
-\bar{f},
\]
where $\bar{f}$ is determined so that the average of $f_i$ is zero.
The total number of degrees of freedom of such system is approximately $8\cdot 10^{6}$.
The solution for such test case is shown in fig.\ \ref{fig:2d-solution} (the illustration is for $N_1=N_2=64$).

\begin{figure}
\begin{center}
\includegraphics{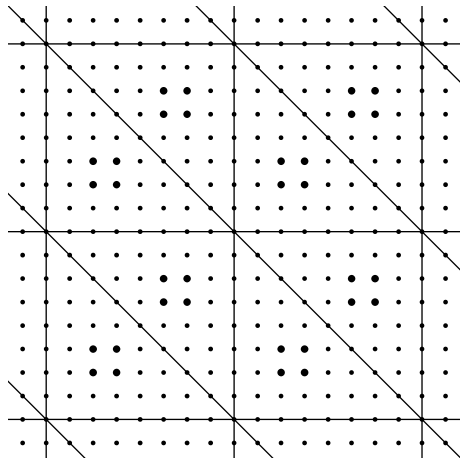}
\end{center}
\caption{Illustration of a 2D triangulation.
	Larger atoms comprise sampling domains for HQC.}
\label{fig:2d-triangulation}
\end{figure}

The atomistic domain is triangulated using $t^2$ nodes and $K = 2 t^2$ triangles ($t=2,4,\ldots,2^{10}$).
In each triangle $S_k$ a sampling domain ${\mathcal I}_k$ is chosen, each sampling domain contains four atoms (see illustration in fig.\ \ref{fig:2d-triangulation}).
The number of degrees of freedom of the discretized problem is $2 t^2$.

\begin{figure}
\begin{center}
\includegraphics{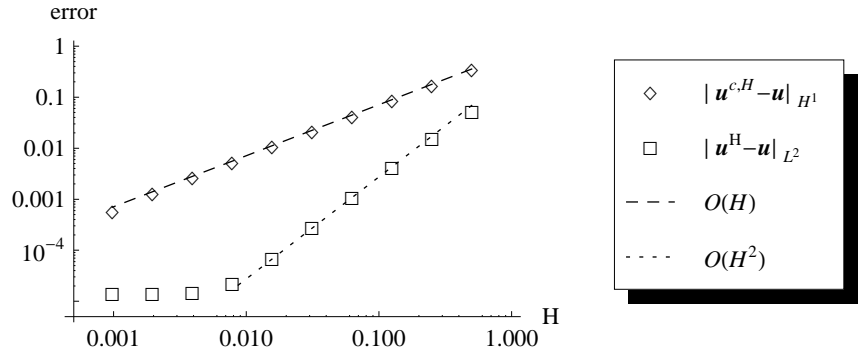}
\end{center}
\caption{Results for the 2D test case 1: error depending on the mesh size $H$.
The $L^2$-error of the homogenized solution $u^H$ and the $H^1$-error of the post-processed solution $u^{H,c}$ are shown.
The errors behave in accordance with the 1D analysis (Theorems \ref{thm:analysis-of-methods:homogenized-L2-convergence} and \ref{thm:analysis-of-methods:reconstructed-convergence}).}
\label{fig:2d-testcase1-error}
\end{figure}

The error of the solution for different mesh size $H$ ($H=0.5,0.25,\ldots,2^{-10}$) is shown in fig.\ \ref{fig:2d-testcase1-error}.
The results are essentially the same as in 1D case: the method convergences with the first order of mesh size in the $H^1$-norm and with the second order in the $L^2$-norm.
We also see the plateau for the $L^2$-error of the homogenized solution $u^{c, H}$.
It is remarkable that all the conclusions of 1D analysis (cf.\ Theorems \ref{thm:analysis-of-methods:homogenized-L2-convergence} and \ref{thm:analysis-of-methods:reconstructed-convergence}) are also valid for the 2D computations.

\subsection{2D Test Case 2}\label{sec:numeric:2d-second}

\begin{figure}
\begin{center}
\hfill
\includegraphics{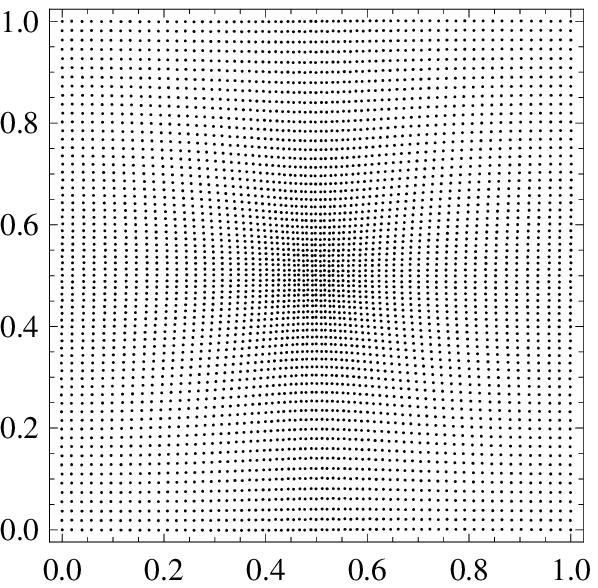}
\hfill
\includegraphics{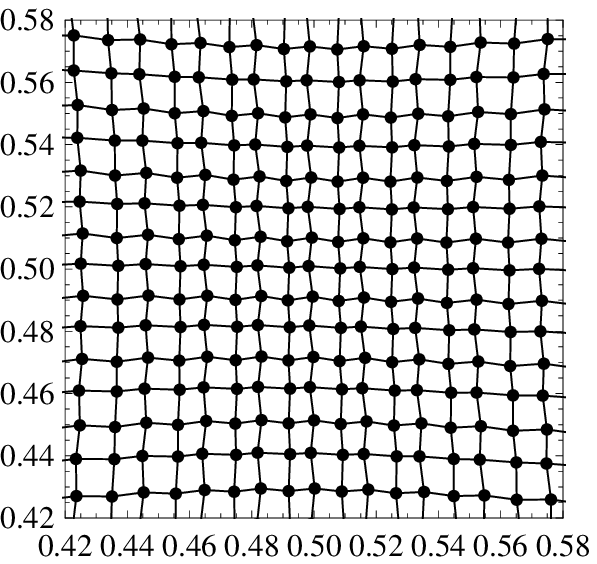}
\hfill
\end{center}
\caption{Atomic equilibrium configuration for $N_1=N_2=64$ for the 2D test case 2.
Deformation of the whole material (left) and a close-up (right).}
\label{fig:2d-solution-rand}
\end{figure}

The second test case is analogous to the previous one, but with the different tensors $\psi_r$ describing the atomistic bonds.
The tensors $\psi_r$ were chosen to have the following (randomly generated) values:
\[
\psi_{(1,0),i} = \left\{
	\begin{array}{lcl}
	1.3 & & \textnormal{$i_1$ even, $i_2$ even} \\
	1.6 & & \textnormal{$i_1$ even, $i_2$ odd} \\
	1.8 & & \textnormal{$i_1$ odd, $i_2$ even} \\
	1.2 & & \textnormal{$i_1$ odd, $i_2$ odd,}
	\end{array}
\right.
\quad
\psi_{(0,1),i} = \left\{
	\begin{array}{lcl}
	1.5 & & \textnormal{$i_1$ even, $i_2$ even} \\
	1.7 & & \textnormal{$i_1$ even, $i_2$ odd} \\
	1.5 & & \textnormal{$i_1$ odd, $i_2$ even} \\
	2 & & \textnormal{$i_1$ odd, $i_2$ odd,}
	\end{array}
\right.
\]
\[
\psi_{(1,1),i} = \left\{
	\begin{array}{lcl}
	0.3 & & \textnormal{$i_1$ even, $i_2$ even} \\
	0.8 & & \textnormal{$i_1$ even, $i_2$ odd} \\
	0.6 & & \textnormal{$i_1$ odd, $i_2$ even} \\
	0.4 & & \textnormal{$i_1$ odd, $i_2$ odd,}
	\end{array}
\right.
\quad
\psi_{(-1,1),i} = \left\{
	\begin{array}{lcl}
	0.4 & & \textnormal{$i_1$ even, $i_2$ even} \\
	0.9 & & \textnormal{$i_1$ even, $i_2$ odd} \\
	0.4 & & \textnormal{$i_1$ odd, $i_2$ even} \\
	0.1 & & \textnormal{$i_1$ odd, $i_2$ odd.}
	\end{array}
\right.
\]
For a tensor with such a random structure, the homogenized tensor can only be precomputed numerically, and in the case of a nonlinear problem should be found in the course of the actual computation.
The solution (for $N_1=N_2=64$) is shown in fig.\ \ref{fig:2d-solution-rand}.

\begin{figure}
\begin{center}
\includegraphics{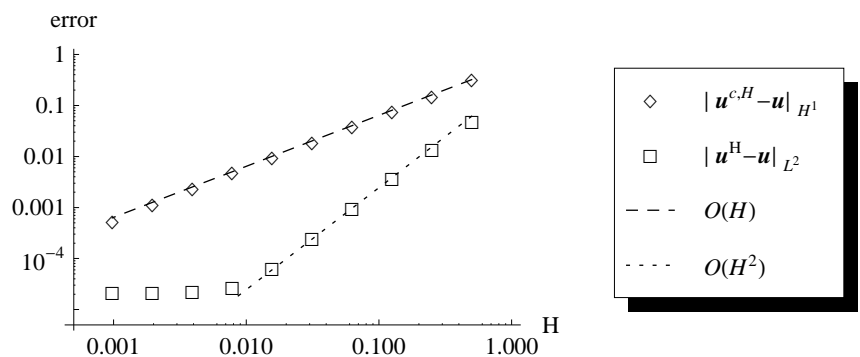}
\end{center}
\caption{Results for the 2D test case 2: error depending on the mesh size $H$.
The $L^2$-error of the homogenized solution $u^H$ and the $H^1$-error of the post-processed solution $u^{H,c}$ are shown.
The errors behave in accordance with the 1D analysis (Theorems \ref{thm:analysis-of-methods:homogenized-L2-convergence} and \ref{thm:analysis-of-methods:reconstructed-convergence}).}
\label{fig:2d-testcase2-error}
\end{figure}

The error of the solution for different number of degrees of freedom is shown in fig.\ \ref{fig:2d-testcase2-error}.
The results are similar to the results of all the previous test problems.
Again, the results are in accordance with Theorems \ref{thm:analysis-of-methods:homogenized-L2-convergence} and \ref{thm:analysis-of-methods:reconstructed-convergence}.

\section{Summary and Conclusion}\label{sec:conclusion}

We have considered the problem of modeling materials with complex atomistic lattice.
We have proposed a discrete homogenization framework to analyze the QC method for complex crystalline materials.
This framework allowed us to prove convergence (in 1D) for the QC method proposed in 
\cite{TadmorSmithBernsteinEtAl1999}. Numerical homogenization has also been used to formulate
the QC method.  
The equivalence of this algorithm to the QC method of \cite{TadmorSmithBernsteinEtAl1999}  is discussed in detail in \cite{AbdulleLinShapeevII}.
We have also shown how to apply the presented technique in a 2D setting.
The 1D and 2D numerical examples presented verify validity of the analysis in more general setting.
We note that the extension of the algorithm proposed in this paper to simulate atomistic materials at finite temperature or non-crystalline materials is of high interest.
This is a topic for future research.

\bibliographystyle{siam}
\bibliography{hqc}

\section*{Appendix A}
\begin{lemma}[Discrete Poincar\'e inequality 1]\label{lem:discrete-poincare}
Let $\vec{g} \in \bbR^L$ and $\sum_{i=1}^L g_i = 0$.
Then
\begin{equation}
\sum\limits_{i=1}^L |g_i|^2 \le \frac{L^2}{6}\sum\limits_{i=1}^{L-1} |g_{i+1}-g_i|^2.
\label{eq:discrete-poincare}
\end{equation}
\end{lemma}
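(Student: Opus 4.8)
The plan is to prove the slightly stronger inequality
\[
\sum_{i=1}^L g_i^2 \;\le\; \frac{L^2}{8}\sum_{i=1}^{L-1}|g_{i+1}-g_i|^2,
\]
which implies \eqref{eq:discrete-poincare} at once since $\tfrac18\le\tfrac16$. The organising idea is that the mean-zero hypothesis converts a symmetric double sum of squared pairwise differences into a multiple of the quantity we wish to bound. Expanding
\[
\sum_{i=1}^L\sum_{k=1}^L (g_i-g_k)^2 = L\sum_{i=1}^L g_i^2 + L\sum_{k=1}^L g_k^2 - 2\Bigl(\sum_{i=1}^L g_i\Bigr)\Bigl(\sum_{k=1}^L g_k\Bigr)
\]
and invoking $\sum_i g_i=0$ to kill the cross term yields the exact identity $\sum_{i,k}(g_i-g_k)^2 = 2L\sum_i g_i^2$. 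Hence it suffices to bound the left-hand double sum by $\tfrac{L^3}{4}\sum_{j=1}^{L-1}|g_{j+1}-g_j|^2$.

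For the upper bound I would set $d_j:=g_{j+1}-g_j$ and use the telescoping representation $g_i-g_k=\sum_{j=k}^{i-1}d_j$ for $k<i$, so that Cauchy--Schwarz gives $(g_i-g_k)^2\le (i-k)\sum_{j=k}^{i-1}d_j^2$. Substituting this into $\sum_{i,k}(g_i-g_k)^2=2\sum_{k<i}(g_i-g_k)^2$ and interchanging the order of summation, each $d_j^2$ acquires the combinatorial weight
\[
w_j=\sum_{k=1}^{j}\sum_{i=j+1}^{L}(i-k).
\]
Summing the arithmetic progressions first in $i$ and then in $k$ collapses this to $w_j=\tfrac12\,L\,j(L-j)$, whence $\sum_{i,k}(g_i-g_k)^2\le L\sum_{j=1}^{L-1} j(L-j)\,d_j^2$.

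Finally, bounding $j(L-j)\le \tfrac{L^2}{4}$ term by term and combining with the identity from the first paragraph gives $2L\sum_i g_i^2\le L\cdot\tfrac{L^2}{4}\sum_j d_j^2$, i.e. $\sum_i g_i^2\le \tfrac{L^2}{8}\sum_j d_j^2$, which is the claim. The only step demanding genuine care is the evaluation of $w_j$ and the bookkeeping of the reversed double sum; the rest is Cauchy--Schwarz together with the mean-zero cancellation. As a sanity check I would verify the chain on $L=2$, where the bound is in fact an equality ($\sum_i g_i^2=\tfrac{L^2}{8}|g_2-g_1|^2$), confirming that no constant is being lost by accident.

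As an alternative route one can extract the sharp constant $\bigl(4\sin^2(\pi/(2L))\bigr)^{-1}$ by diagonalising the path-graph Laplacian associated with the quadratic form $\sum_j|g_{j+1}-g_j|^2$ and restricting to the orthogonal complement of the constant vector; the stated constant then follows from the elementary estimate $4\sin^2(\pi/(2L))\ge 6/L^2$ for $L\ge2$ (using that $x\mapsto \sin x/x$ is decreasing and checking the endpoint $x=\pi/4$). This is cleaner conceptually but trades the combinatorial computation of $w_j$ for a trigonometric inequality, so I would keep the elementary double-sum argument as the main proof.
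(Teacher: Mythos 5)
Your proof is correct, and it establishes a slightly stronger inequality than the one stated. Every step checks out: the mean-zero hypothesis does kill the cross term in $\sum_{i,k}(g_i-g_k)^2 = 2L\sum_i g_i^2$; the telescoping plus Cauchy--Schwarz bound $(g_i-g_k)^2\le (i-k)\sum_{j=k}^{i-1}d_j^2$ is valid; the weight $w_j=\sum_{k=1}^{j}\sum_{i=j+1}^{L}(i-k)$ does evaluate to $\tfrac12 Lj(L-j)$; and $j(L-j)\le L^2/4$ closes the argument with the constant $L^2/8\le L^2/6$. However, your route is genuinely different from the paper's. The paper does not symmetrize: it invokes a pointwise representation lemma from Ortner and S\"uli (their Lemma A.1), namely $|g_i|\le\sum_{j=1}^{L-1}|g_{j+1}-g_j|\,\phi_{i,j}$ with the hat-shaped weights $\phi_{i,j}=j/L$ for $j\le i$ and $\phi_{i,j}=(L-j)/L$ for $j>i$, then applies Cauchy--Schwarz in $j$ for each fixed $i$ and computes $\sum_{i}\sum_{j}\phi_{i,j}^2=\tfrac{L^2-1}{6}<\tfrac{L^2}{6}$. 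What each approach buys: yours is entirely self-contained (no external citation) and yields the sharper constant $L^2/8$, which beats the paper's $\tfrac{L^2-1}{6}$ for all $L\ge 3$ and matches it at $L=2$ (where, as you note, equality holds); the cost is the double-sum bookkeeping in evaluating $w_j$. The paper's argument is shorter modulo the cited lemma, and its representation-plus-Cauchy--Schwarz template is deliberately parallel to the periodic case treated immediately afterwards in the same appendix (the representation $g_i=\sum_{k=1}^{L}\tfrac{L+1-2k}{2L}(g_{i-k+1}-g_{i-k})$ and the Discrete Poincar\'e inequality 2 with constant $L^2/12$), so the two lemmas get a unified treatment there. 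Your spectral remark is also sound --- the quadratic form is the path-graph Laplacian, the mean-zero subspace is the orthogonal complement of its kernel, and the sharp constant is $\bigl(4\sin^2(\pi/(2L))\bigr)^{-1}$ --- but you are right to keep the elementary argument as the main proof.
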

\begin{proof}
We start with noticing that lemma A.1 in \cite[p.\ 87]{OrtnerSuli2008} applies to $\vec{g}$ and states that
\[
|g_i| \le \sum\limits_{j=1}^{L-1} |g_{j+1}-g_j| \phi_{i,j},
\]
where
\[
\phi_{i,j} = \left\{
	\begin{array}{lcl}
	\frac{j}{L} & & j\le i \\
	\frac{L-j}{L} & & j> i.
	\end{array}
\right.
\]
Then with the help of the Cauchy-Schwarz inequality one obtains
\[
\sum\limits_{i=1}^L |g_i|^2
\le
\sum\limits_{i=1}^L \left(\sum\limits_{j=1}^{L-1} |g_{j+1}-g_j|\phi_{i,j}\right)^2
\le
\sum\limits_{i=1}^L \left(\sum\limits_{j=1}^{L-1} |g_{j+1}-g_j|^2\right) \left(\sum\limits_{j=1}^{L-1} \phi_{i,j}^2\right),
\]
where by direct computation
\begin{align*}
\sum\limits_{i=1}^L \sum\limits_{j=1}^{L-1} \phi_{i,j}^2
=~&
\sum\limits_{j=1}^{L-1} \sum\limits_{i=1}^L \phi_{i,j}^2
=
\sum\limits_{j=1}^{L-1} \left(\left(\frac{L-j}{L}\right)^2 j + \left(\frac{j}{L}\right)^2 (L-j)\right)
\\ =~&
\sum\limits_{j=1}^{L-1} \frac{(L-j) j}{L} = \frac{L^2-1}{6} < \frac{L^2}{6}.
\end{align*}
\end{proof}

\begin{lemma}\label{lem:representation}
Let $\vec{g} \in \left(\bbR^L\right)_{\#}$.
Then
\[
g_i = \sum\limits_{k=1}^L \frac{L+1-2 k}{2 L}\,(g_{i-k+1}-g_{i-k}).
\]
\end{lemma}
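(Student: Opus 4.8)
The plan is to verify the identity directly by discrete summation by parts (Abel summation), exploiting both the zero-mean property and the periodicity of $\vec g$. First I would set $a_k = \frac{L+1-2k}{2L}$ and $b_k = g_{i-k+1}-g_{i-k}$, so the right-hand side reads $\sum_{k=1}^L a_k b_k$. The partial sums of $b_k$ telescope: writing $B_k = \sum_{j=1}^k b_j$ gives $B_k = g_i - g_{i-k}$, with $B_0 = 0$ and, crucially, $B_L = g_i - g_{i-L} = 0$ by the periodic extension $g_{i-L}=g_i$.

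Next I would apply the summation-by-parts formula
\[
\sum_{k=1}^L a_k b_k = a_L B_L - a_1 B_0 + \sum_{k=1}^{L-1}(a_k - a_{k+1})\,B_k.
\]
The two boundary terms vanish because $B_0 = B_L = 0$, and since $a_k$ is affine in $k$ its forward difference is constant, $a_k - a_{k+1} = \frac{1}{L}$. Hence the right-hand side collapses to $\frac{1}{L}\sum_{k=1}^{L-1}(g_i - g_{i-k})$.

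Finally I would close the argument with the zero-mean hypothesis: by periodicity $\sum_{k=0}^{L-1} g_{i-k}$ is a sum over a full period and therefore equals $\sum_{j=1}^{L} g_j = 0$, so that $\sum_{k=1}^{L-1} g_{i-k} = -g_i$. Consequently $\sum_{k=1}^{L-1}(g_i - g_{i-k}) = (L-1)g_i + g_i = L g_i$, and the prefactor $\frac{1}{L}$ returns exactly $g_i$, as claimed.

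There is essentially no hard step here; the only points demanding care are the index bookkeeping under periodic extension (so that $g_{i-L}=g_i$ and the full-period sum is used correctly) and checking that both boundary contributions in the summation by parts drop out. An equivalent route would be to first derive the asymmetric representation $g_i = \frac{1}{L}\sum_{m=1}^{L-1}(L-m)(g_{i-m+1}-g_{i-m})$ and then add the vanishing quantity $\frac{1-L}{2L}\sum_{k=1}^L (g_{i-k+1}-g_{i-k}) = 0$ (again zero by periodicity) to symmetrize the coefficients into $\frac{L+1-2k}{2L}$; I prefer the summation-by-parts version, as it yields the stated symmetric form in a single pass.
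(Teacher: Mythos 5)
Your proof is correct and takes essentially the same route as the paper's: the paper performs the very same summation by parts, just executed as a manual index shift of the two split sums, and then invokes periodicity ($g_{i-L}=g_i$) and the zero-mean property ($\sum_{k=1}^{L-1} g_{i-k} = -g_i$) at exactly the same points you do. The only difference is cosmetic --- you package the rearrangement as Abel's formula with explicit partial sums $B_k$, which arguably makes the vanishing of the boundary terms more transparent.
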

\begin{proof}
Direct computation of the right-hand-side (RHS) yields:
\begin{align*}
{\rm RHS}
=~&
\sum\limits_{k=1}^L \frac{L+1-2 k}{2 L}\,\left(g_{i-k+1}-g_{i-k}\right)
\\ =~&
\sum\limits_{k=0}^{L-1} \frac{L+1-2 (k-1)}{2 L}\,g_{i-k}
-
\sum\limits_{k=1}^L \frac{L+1-2 k}{2 L}\,g_{i-k}
\\ =~&
\frac{L-1}{2 L} g_i
- \sum\limits_{k=1}^{L-1} \frac{2}{2 L}\,g_{i-k}
+ \frac{L-1}{2 L} g_{i-L}.
\end{align*}
Notice that due to periodicity $g_{i-L}=g_i$ and due to average of $\vec{g}$ being zero, $\sum\limits_{k=1}^{L-1} g_{i-k} = -g_i$.
Hence
\[
{\rm RHS} = \frac{L-1}{2 L} g_i + \frac{2}{2 L} g_i + \frac{L-1}{2 L} g_i = g_i.
\]
\end{proof}

If we consider the periodic extension of the sequence then the estimate of lemma \ref{lem:discrete-poincare} will have a slightly better constant:
\begin{lemma}[Discrete Poincar\'e inequality 2]
Let $\vec{g}\in \left(\bbR^L\right)_{\#}$.
Then
\[
\sum\limits_{i=1}^L |g_i|^2 \le \frac{L^2}{12} \sum\limits_{i=1}^{L} |g_{i+1}-g_i|^2.
\]
\end{lemma}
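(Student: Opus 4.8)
The plan is to lean on the exact representation formula for zero-mean periodic sequences established in Lemma \ref{lem:representation}, since it already builds in the cancellation that is responsible for the factor-of-two improvement over Lemma \ref{lem:discrete-poincare}. First I would write, for each index $i$,
\[
g_i = \sum_{k=1}^L c_k\,(g_{i-k+1}-g_{i-k}), \qquad c_k = \frac{L+1-2k}{2L},
\]
which is precisely the content of Lemma \ref{lem:representation}. Applying the Cauchy--Schwarz inequality to this finite sum gives
\[
|g_i|^2 \le \left(\sum_{k=1}^L c_k^2\right)\left(\sum_{k=1}^L |g_{i-k+1}-g_{i-k}|^2\right),
\]
so the weight factor $\sum_k c_k^2$ is split off from a shifted sum of squared forward differences.

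The next step is to sum over a full period $i=1,\ldots,L$ and interchange the two summations:
\[
\sum_{i=1}^L |g_i|^2 \le \left(\sum_{k=1}^L c_k^2\right)\sum_{k=1}^L\sum_{i=1}^L |g_{i-k+1}-g_{i-k}|^2.
\]
Here I would use the $L$-periodicity of $\vec g$ (hence of the difference sequence $g_{i+1}-g_i$): for each fixed $k$, the map $i\mapsto i-k$ permutes a full residue system modulo $L$, so $\sum_{i=1}^L |g_{i-k+1}-g_{i-k}|^2 = \sum_{i=1}^L |g_{i+1}-g_i|^2$, independent of $k$. This shift-invariance is exactly what the periodic setting buys us, and it collapses the double sum into $L\bigl(\sum_{i=1}^L |g_{i+1}-g_i|^2\bigr)$.

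It then remains to evaluate the scalar $\sum_{k=1}^L c_k^2$ explicitly. A direct computation of $\sum_{k=1}^L (L+1-2k)^2 = \tfrac{1}{3}L(L^2-1)$ yields $\sum_{k=1}^L c_k^2 = \tfrac{L^2-1}{12L}$, whence
\[
\sum_{i=1}^L |g_i|^2 \le \frac{L^2-1}{12L}\cdot L\sum_{i=1}^L |g_{i+1}-g_i|^2 = \frac{L^2-1}{12}\sum_{i=1}^L |g_{i+1}-g_i|^2 < \frac{L^2}{12}\sum_{i=1}^L |g_{i+1}-g_i|^2,
\]
which is the claimed bound (in fact with the slightly sharper constant $\tfrac{L^2-1}{12}$, paralleling the $\tfrac{L^2-1}{6}$ obtained in Lemma \ref{lem:discrete-poincare}). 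I do not anticipate a genuine obstacle here: the only nonroutine point is recognizing that the representation of Lemma \ref{lem:representation} together with the periodic shift-averaging is what halves the constant relative to the non-periodic estimate, and the remaining work is the elementary evaluation of $\sum_k c_k^2$.
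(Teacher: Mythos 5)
Your proposal is correct and follows essentially the same route as the paper's own proof: the representation formula of Lemma \ref{lem:representation}, Cauchy--Schwarz, the periodic shift-invariance collapsing the double sum to $L\sum_j(g_{j+1}-g_j)^2$, and the evaluation $\sum_k c_k^2 = \tfrac{L^2-1}{12L}$, yielding the sharper constant $\tfrac{L^2-1}{12}$ before relaxing to $\tfrac{L^2}{12}$. The only difference is cosmetic ordering of when the constant is computed.
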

\begin{proof}
Then by using lemma \ref{lem:representation} and the Cauchy-Schwarz inequality one obtains
\begin{align*}
\sum\limits_{i=1}^L |g_i|^2
\le~&
\sum\limits_{i=1}^L \left(\sum\limits_{k=1}^{L} \frac{L+1-2 k}{2 L} (g_{i+1-k}-g_{i-k})\right)^2
\\ \le~&
\sum\limits_{i=1}^L \sum\limits_{k=1}^{L} \left(\frac{L+1-2 k}{2 L}\right)^2 \sum\limits_{k=1}^{L} (g_{i+1-k}-g_{i-k})^2
\\ =~&
\sum\limits_{i=1}^L \frac{L^2-1}{12 L} \sum\limits_{k=1}^{L} (g_{i+1-k}-g_{i-k})^2
\\ =~&
\frac{L^2-1}{12} \sum\limits_{j=1}^{L} (g_{j+1}-g_{j})^2
\le
\frac{L^2}{12} \sum\limits_{j=1}^{L} (g_{j+1}-g_{j})^2.
\end{align*}
\end{proof}

\begin{corollary}[Discrete Poincar\'e inequality for $U_{\#}^n$]
\label{cor:discrete-poincare-for-Unsharp}
The functional $|\bullet|_{H^1}$ (cf.\ \eqref{eq:estimates:norms_fct}) defines a norm on $U_{\#}^n$.
For $\vecu \in U_{\#}^n$ the following inequality holds:
\[
\|u\|_{L^2} \le \frac{1}{2 \sqrt{3}} |u|_{H^1}.
\]
\end{corollary}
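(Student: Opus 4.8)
The plan is to deduce the statement directly from the Discrete Poincar\'e inequality~2 proved just above, after identifying $\vecu$ with its representant in $\bbR_\#^n$ and unwinding the normalisation built into the norms $\|\cdot\|_{L^2(n)}$ and $|\cdot|_{H^1(n)}$. There are really two claims to dispatch: that $|\cdot|_{H^1}$ is genuinely a norm (and not merely a seminorm) on $U_{\#}^n$, and that the quantitative bound with constant $\frac{1}{2\sqrt3}$ holds.

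For the norm claim, I would first note that $|\cdot|_{H^1}$ satisfies the homogeneity and triangle-inequality axioms automatically, being $\|D\,\cdot\|_{L^2}$; the only point to verify is definiteness. If $u\in U_{\#}^n$ has $|u|_{H^1}=0$ then $Du\equiv 0$, i.e.\ $u_{i+1}=u_i$ for every $i$, so $u$ is constant; since $\left<u\right>_X=0$ this forces $u\equiv 0$. Alternatively, definiteness falls out for free from the inequality established below.

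For the quantitative bound, I would set $L=n$ and apply the Discrete Poincar\'e inequality~2 to $\vec g=\vecu\in\bbR_\#^n$, giving $\sum_{i=1}^n u_i^2 \le \frac{n^2}{12}\sum_{i=1}^n (u_{i+1}-u_i)^2$. The remaining work is bookkeeping: using $u_{i+1}-u_i=\delta\,Du_i$ one has $\sum_i (u_{i+1}-u_i)^2=\delta^2\sum_i (Du_i)^2 = n\delta^2|u|_{H^1}^2$, while $\|u\|_{L^2}^2=\frac1n\sum_i u_i^2$; combining the two yields $\|u\|_{L^2}^2 \le \frac{(n\delta)^2}{12}|u|_{H^1}^2$. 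Under the standing convention that the period has unit length, $n\delta=1$, the factor collapses to $\frac{1}{12}=\left(\frac{1}{2\sqrt3}\right)^2$, which is exactly the asserted inequality after taking square roots.

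The only real subtlety---hence the step I would watch most carefully---is this normalisation bookkeeping: the $\tfrac1n$-averaging inside both norms and the $\tfrac1\delta$ in $D$ must be tracked so that the dimensionful constant of Discrete Poincar\'e~2 (which is stated for unweighted sums) reproduces the dimensionless $\frac{1}{2\sqrt3}$. The identity $\frac{1}{2\sqrt3}=\sqrt{1/12}$ together with the unit-length normalisation $n\delta=1$ is precisely what makes the constants agree; everything else is a direct substitution.
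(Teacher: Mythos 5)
Your proposal is correct and follows exactly the route the paper intends: the corollary is stated immediately after Discrete Poincar\'e inequality~2 with no separate proof, the intended argument being precisely the application of that lemma with $L=n$ plus the unwinding of the $\tfrac1n$-averaged norms and the $\tfrac1\delta$ in $D$. Your explicit tracking of the normalisation $n\delta=1$ (the unit period length, which the paper leaves implicit but uses throughout, e.g.\ $N\eps=1$ in the numerical sections) and the short definiteness argument for the norm claim are both accurate and fill in exactly what the paper omits.
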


\begin{lemma}[Inverse discrete Poincar\'e inequality]\label{lem:inverse-Poincare}
For $\vecu \in U_{\rm rep}^n$ the following inequality holds:
\begin{equation}
\epsilon |\vecu |_{W^{1,q}} \le 2 \|\vecu \|_{L^q}
\label{eq:inverse-Poincare}
\end{equation}
for $1\le q \le \infty$.
\end{lemma}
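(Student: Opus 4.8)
The plan is to reduce the inequality to an elementary estimate on finite differences, exploiting the fact that multiplying the discrete derivative by $\epsilon$ simply recovers a difference of nodal values. First I would observe that, by the definition of $D$ in Section \ref{sub:fct_space}, one has $\epsilon (D\vecu)_i = u_{i+1}-u_i$, so that
\[
\epsilon |\vecu|_{W^{1,q}} = \epsilon \|D\vecu\|_{L^q} = \left(\frac{1}{n}\sum_{i=1}^n |u_{i+1}-u_i|^q\right)^{1/q}.
\]
The whole task then reduces to bounding this right-hand side by $2\|\vecu\|_{L^q}$.

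For the main step I would apply the triangle inequality pointwise, $|u_{i+1}-u_i|\le |u_{i+1}|+|u_i|$, followed by the convexity (power-mean) estimate $(a+b)^q \le 2^{q-1}(a^q+b^q)$, valid for $q\ge 1$ and $a,b\ge 0$. This gives $|u_{i+1}-u_i|^q \le 2^{q-1}\left(|u_{i+1}|^q+|u_i|^q\right)$. Summing over $i$ and using the $n$-periodicity of $\vecu$ to reindex $\sum_{i=1}^n |u_{i+1}|^q = \sum_{i=1}^n |u_i|^q$, I obtain $\sum_{i=1}^n |u_{i+1}-u_i|^q \le 2^q \sum_{i=1}^n |u_i|^q$. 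Taking the $q$-th root and dividing by $n^{1/q}$ yields exactly $\epsilon |\vecu|_{W^{1,q}} \le 2\|\vecu\|_{L^q}$.

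Finally I would treat the endpoint $q=\infty$ directly rather than by passing to the limit, since the corresponding norm is defined separately: from $\epsilon |\vecu|_{W^{1,\infty}} = \max_i |u_{i+1}-u_i| \le \max_i\left(|u_{i+1}|+|u_i|\right) \le 2\|\vecu\|_{L^\infty}$ the claim is immediate. There is no genuine obstacle in this argument; the only points requiring care are correctly tracking the constant, which arises from the factor $2^{q-1}$ in the convexity inequality combined with the doubling produced by periodic reindexing, and the fact that periodicity is what makes the reindexing exact.
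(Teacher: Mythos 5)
Your proof is correct and rests on exactly the same ideas as the paper's: writing $\epsilon D\vecu$ as the difference $T\vecu-\vecu$ of the sequence and its translate, applying a triangle inequality, and using $n$-periodicity to identify the norm of the shifted sequence with that of $\vecu$. The only difference is one of execution: the paper applies the triangle (Minkowski) inequality directly at the level of the $L^q$ norm, $\|T\vecu-\vecu\|_{L^q}\le\|T\vecu\|_{L^q}+\|\vecu\|_{L^q}=2\|\vecu\|_{L^q}$, whereas you work pointwise with the convexity bound $(a+b)^q\le 2^{q-1}(a^q+b^q)$ and reindex the sum, arriving at the same constant by a slightly longer path.
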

\begin{proof}
\[
\epsilon |\vecu |_{W^{1,q}}
=
\|\epsilon D \vecu \|_{L^q}
=
\|T_i \vecu - \vecu \|_{L^q}
\le
\|T_i \vecu \|_{L^q} + \|\vecu \|_{L^q}
= 2 \|\vecu \|_{L^q}.
\]
\end{proof}

\begin{lemma}\label{lem:Hminus1-through-L2}
For $\vecu \in U_{\#}^n$ the following inequality holds:
\begin{equation}
|\vecu |_{H^{-1}} \le \frac{1}{2 \sqrt{3}} \|\vecu \|_{L^2}.
\label{eq:Hminus1-through-L2}
\end{equation}
\end{lemma}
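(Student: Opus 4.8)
The plan is to work directly from the definition of the $H^{-1}$-seminorm as the supremum
\[
|\vecu|_{H^{-1}(n)} = \sup_{\substack{\vec{w}\in \bbR_{\#}^n \\ \vec{w}\ne 0}} \frac{\left<\vecu,\vec{w}\right>_i}{|\vec{w}|_{H^1(n)}},
\]
so it suffices to produce, for every nonzero test function $\vec{w}\in\bbR_{\#}^n$, the bound $\left<\vecu,\vec{w}\right>_i \le \frac{1}{2\sqrt 3}\,\|\vecu\|_{L^2}\,|\vec{w}|_{H^1}$, after which dividing by $|\vec{w}|_{H^1}$ and taking the supremum gives the result.

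First I would apply the discrete Cauchy--Schwarz inequality to the bilinear form in the numerator, yielding $\left<\vecu,\vec{w}\right>_i \le \|\vecu\|_{L^2}\,\|\vec{w}\|_{L^2}$. The crucial point is that the test function $\vec{w}$ ranges only over the zero-average space $\bbR_{\#}^n$, so I may then invoke the discrete Poincar\'e inequality of Corollary \ref{cor:discrete-poincare-for-Unsharp} to replace the $L^2$-norm of $\vec{w}$ by its $H^1$-seminorm with the sharp constant, namely $\|\vec{w}\|_{L^2}\le \frac{1}{2\sqrt 3}\,|\vec{w}|_{H^1}$. Chaining these two estimates produces exactly $\left<\vecu,\vec{w}\right>_i \le \frac{1}{2\sqrt 3}\,\|\vecu\|_{L^2}\,|\vec{w}|_{H^1}$.

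There is no genuine obstacle here: the entire content of the lemma is the observation that the $H^{-1}$-norm is dual to the $H^1$-seminorm and that the Poincar\'e constant $\tfrac{1}{2\sqrt 3}$ governing $U_{\#}^n$ transfers to the dual bound. The only point requiring mild care is to confirm that the supremum is taken over the same zero-average space on which the Poincar\'e inequality is valid; since the definition of $|\bullet|_{H^{-1}(n)}$ already restricts $\vec{w}$ to $\bbR_{\#}^n$, Corollary \ref{cor:discrete-poincare-for-Unsharp} applies verbatim, and the estimate follows immediately upon taking the supremum over all such $\vec{w}$.
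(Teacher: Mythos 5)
Your proof is correct and follows essentially the same route as the paper: both arguments combine the discrete Cauchy--Schwarz inequality with the discrete Poincar\'e inequality (Corollary \ref{cor:discrete-poincare-for-Unsharp}) applied to the zero-average test functions in the definition of $|\bullet|_{H^{-1}}$; the only (immaterial) difference is that the paper inserts the Poincar\'e bound into the denominator first and then identifies the remaining supremum as $\|\vecu\|_{L^2}$, whereas you apply Cauchy--Schwarz first and then Poincar\'e.
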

\begin{proof}
Using the discrete Poincar\'e inequality yields:
\[
|\vecu |_{H^{-1}}
=
\sup\limits_{\substack{\vec{v}\in U_{\#}^n \\ \vec{v}\ne 0}} \frac{\left<\vecu ,\vec{v}\right>_i}{|\vec{v}|_{H^1}}
\le
\sup\limits_{\substack{\vec{v}\in U_{\#}^n \\ \vec{v}\ne 0}} \frac{\left<\vecu ,\vec{v}\right>_i}{2 \sqrt{3} \|\vec{v}\|_{L^2}}
=
\frac{1}{2 \sqrt{3}}\|\vecu \|_{L^2}.
\]
\end{proof}

\end{document}